\documentclass[11pt]{article}

\def\UseSection{
        \numberwithin{equation}{section}
        \newtheorem{theorem}    {Theorem}[section]
        \DefineTheorems 
}
\usepackage[textwidth=500pt,textheight=650pt,centering]{geometry} 

\usepackage{amsfonts}
\usepackage{amsmath,amssymb,amsthm}
\usepackage{appendix}
\usepackage{bbm} 
\usepackage{amsbsy}
\usepackage{enumerate}
\usepackage{cite}
\usepackage{comment}

\usepackage[bookmarks, colorlinks, breaklinks,
pdftitle={},pdfauthor={}]{hyperref}

\hypersetup{
  linkcolor=black,
  citecolor=black,
  filecolor=black,
  urlcolor=black}

\usepackage{verbatim}
\usepackage{tikz}
\usepackage{stmaryrd}
\usepackage{color}

\newcommand{\magenta}{\color{magenta}}

\newcommand{\black}{\black}

\usepackage[font=small,labelfont=bf]{caption}


\numberwithin{equation}{section}

\usetikzlibrary{calc,decorations.markings}
\usetikzlibrary{decorations.pathmorphing}
\usetikzlibrary{decorations.pathreplacing}
\usetikzlibrary{patterns}
\usetikzlibrary{arrows}

\newcommand{\bb}[1]{\mathbb{#1}}

\newcommand{\1}{\mathbbm{1}}


\newcommand{\D}[0]{{\, \operatorname{d}}}


\newcommand{\blank}[1]{}


\newcommand{\E}{\bb E}
\newcommand{\R}{\bb R}
\newcommand{\Z}{\bb Z}
\newcommand{\C}{\bb C}

\newcommand{\T}{\bb T}
\renewcommand{\P}{\bb P}


\newcommand{\Ccal}   {\mathcal{C}}

\newcommand{\Gcal}   {\mathcal{G}}

\newcommand{\Lcal}   {\mathcal{L}}
\newcommand{\Mcal}   {\mathcal{M}}

\newcommand{\Ucal}   {\mathcal{U}}

\newcommand{\Wcal}   {\mathcal{W}}

\newcommand{\veee}[1]{\langle #1 \rangle}
\newcommand{\xvee}{\veee{x}}

\newcommand{\yvee}{\veee{y}}

\newcommand{\xyvee}{\veee{x-y}}


\newcommand{\chem}[1]{{\magenta #1}}

\newcommand{\nnb}	{\nonumber \\}
\newcommand{\bubble}{{\sf B}}

\def\DefineTheorems{
	\newtheorem{lemma}      [theorem] {Lemma}
	
	\newtheorem{conj}        [theorem] {Conjecture}
	
	\newtheorem{prop}        [theorem] {Proposition}
	\theoremstyle{definition}
	\newtheorem{defn}       [theorem] {Definition}
	\newtheorem{rk}       [theorem] {Remark}
}

\UseSection
\setcounter{secnumdepth}{3} 

\newcommand{\lbeq}[1]  {\label{e:#1}}
\newcommand{\refeq}[1] {\eqref{e:#1}}    

\title{Weakly self-avoiding walk on a high-dimensional torus}

 \author{
    Emmanuel Michta\thanks{Department of Mathematics,
     University of British Columbia,
     Vancouver, BC, Canada V6T 1Z2.
     Michta: \url{https://orcid.org/0000-0001-7222-0422}, {\tt michta@math.ubc.ca}.
     Slade:  \url{https://orcid.org/0000-0001-9389-9497}, {\tt slade@math.ubc.ca}.}
    \and
   Gordon Slade$^*$}

 \date{\vspace{-5ex}} 

\begin{document}
\maketitle

\begin{abstract}
How long does a self-avoiding walk on a discrete $d$-dimensional
torus have to be before it begins
to behave differently from a self-avoiding walk on $\Z^d$?
We consider a version of this question for weakly self-avoiding walk on
a torus in dimensions $d>4$.  On $\Z^d$ for $d>4$, the partition function for $n$-step weakly
self-avoiding walk is known to be
asymptotically purely exponential, of the form $A\mu^n$,
where $\mu$ is the growth constant for weakly self-avoiding walk on $\Z^d$.
We prove
the identical asymptotic behaviour $A\mu^n$
on the torus (with the same $A$ and $\mu$ as on $\Z^d$)
until $n$ reaches order $V^{1/2}$,
where $V$ is the number of vertices in the torus.  This shows that the walk must have
length of order at least $V^{1/2}$ before it ``feels'' the torus in its leading
asymptotics.
Our results support the conjecture that
the behaviour of the partition function does change once $n$ reaches $V^{1/2}$, and we
relate this to a conjectural
critical scaling window which separates the dilute phase $n \ll V^{1/2}$ from the dense phase $n \gg V^{1/2}$.  To prove the conjecture and to establish the
existence of the scaling window remains a challenging open problem.
The proof uses a novel lace expansion analysis based on the ``plateau'' for
the torus two-point function obtained in previous work.
\end{abstract}

%

\section{Introduction}
\label{sec:1}

\subsection{Weakly self-avoiding walk}

An $n$-step walk on $\Z^d$ is a function $\omega :\{0,1,\ldots,n\} \to \Z^d$ with
$\|\omega(i)-\omega(i-1)\|_1=1$ for $1 \le i \le n$.
For $x \in \Z^d$,
let $\Wcal_n(x)$ denote the set of $n$-step walks with $\omega(0)=0$ and $\omega(n)=x$.
For an $n$-step walk $\omega$, and for $0 \le s < t \le n$, we define
\begin{equation}
\label{e:Ustdef}
    U_{st}(\omega) =
    \begin{cases}
        -1 & (\omega(s)=\omega(t))
        \\
        0 & (\omega(s)\neq \omega(t)).
    \end{cases}
\end{equation}
Given $\beta \in [0,1]$
and $x \in \Z^d$, we define the \emph{partition function} $c_n$ by
\begin{equation}
\label{e:cndef}
    c_n(x) = \sum_{\omega\in \Wcal_n(x)} \prod_{0\le s<t\le n}(1+\beta U_{st}(\omega)),
    \qquad
    c_n = \sum_{x\in\Z^d}c_n(x).
\end{equation}
We typically omit the dependency on $\beta$ in our notation.
The product in \eqref{e:cndef}
discounts $\omega$ by a factor
$1-\beta$  for each pair $s<t$
with an intersection $\omega(s)=\omega(t)$.
For $\beta=0$, $c_n$ is simply the number of $n$-step walks, which is $(2d)^n$.
For $\beta=1$, $c_n$ is the number of $n$-step (strictly) self-avoiding walks.
The case $\beta \in (0,1)$ is the \emph{weakly self-avoiding walk}.
All known theorems are consistent with the hypothesis of universality,
which asserts in particular
that the asymptotic behaviour of the weakly self-avoiding walk is the
same as that of strictly self-avoiding walk.  Since the presence of a small parameter
$\beta$ can be helpful, the weakly self-avoiding walk has often been studied, as in
\cite{BS85} where Brydges and Spencer invented their lace expansion.
The weakly self-avoiding walk is sometimes called the Domb--Joyce model, after \cite{DJ72}.

We are primarily interested in walks not on $\Z^d$, but rather on a discrete torus
$\T_r^d = (\Z/r\Z)^d$ for an integer $r \ge 3$ which will be taken to be large.
The \emph{volume} of the torus is $V=r^d$.
Torus walks are defined as on $\Z^d$ but with the difference $\omega(i)-\omega(i-1)$
computed using addition modulo $r$ in each component.
For $x \in \T_r^d$, we let $\omega\in \Wcal_n^\T(x)$ denote the set of $n$-steps torus walks from $0$ to $x$.
In the same way as on $\Z^d$, we define torus quantities
\begin{equation}
\label{e:cnTdef}
    c_n^\T(x) = \sum_{\omega\in \Wcal_n^\T(x)} \prod_{0\le s<t\le n}(1+\beta U_{st}(\omega)),
    \qquad
    c_n^\T = \sum_{x\in\T_r^d}c_n^\T(x).
\end{equation}

Both sequences $c_n$ and $c_N^\T$ are submultiplicative, e.g., $c_{n+m} \le c_nc_m$,
and therefore (see \cite[Section~1.2]{MS93})
the limits $\mu=\lim_{n\to \infty}c_n^{1/n}$ and $\mu^\T =
\lim_{n\to \infty}(c_n^\T)^{1/n}$ exist, with $c_n \ge \mu^n$ and $c_n^\T \ge (\mu^\T)^n$
for all $n \ge 0$.
However there is a very big difference:  unsurprisingly $\mu$ is finite and strictly positive
(in fact in the interval $[\mu_{1},2d]$ where $\mu_1=\mu(\beta=1)$ is the \emph{connective
constant}),
but $\mu^\T$ is
equal to zero when $\beta \in (0,1]$.
This can be seen as follows.  Given a torus walk $\omega$ and a point $x$ in
the torus,
let $\ell_x(\omega)$ be the local time at $x$, i.e., the number of times the walk visits $x$.
Then $\sum_{x \in \T_r^d}\ell_x(\omega) = n+1$,
and with the degenerate binomial coefficients $\binom{0}{2}$ and $\binom{1}{2}$ equal to zero,
\begin{align}
	\prod_{0\le s<t\le n}(1+\beta U_{st}(\omega))
	&= (1-\beta)^{\sum_{x \in \T_r^d}\binom{\ell_x(\omega)}{2}}
    = (1-\beta)^{\frac12 \sum_{x \in \T_r^d} \ell_x(\omega)^2-\frac12 (n+1)},
\end{align}
from which, via $1-\beta \le e^{-\beta}$ and the Cauchy--Schwarz inequality, we obtain
\begin{align}
\label{e:cnTrough}
	c_n^\T
	&\leq (2d)^n e^{-\frac12\beta( (n+1)^2/V-  (n+1))}.
\end{align}
The inequality \eqref{e:cnTrough} shows that $\mu^\T=0$ when $V$ is fixed and
$\beta >0$.

\subsection{Notation}

We write $f \sim g$ to mean $\lim f/g =1$, $f\prec g$ to mean $f \leq c_1 g$ with $c_1>0$ and $f \succ  g$ to mean  $g \prec  f$. We also write $f \asymp g$ when $ g \prec f \prec g$.
Constants are permitted to depend on $d$.

\subsection{Main result}

In order to compare walks in $\T_r^d$ and in $\Z^d$ we introduce \emph{the lift of a
torus walk} which is to be thought of as the unwrapping to $\Z^d$ of a walk on $\T_r^d$.
In detail, for any $x \in \T_r^d$ we let $x_r$ denote the unique representative
in $[-\frac r2, \frac r2)^d \cap \Z^d$
of the equivalence class of $x$.
Given a torus walk $\omega \in \Wcal_n^\T = \cup_{x\in \T_r^d}\Wcal_n(x)$,
we define the $\Z^d$ walk $\bar \omega \in \Wcal_n= \cup_{x\in \Z^d}\Wcal_n(x)$,
the \emph{lift} of $\omega$, by $\bar\omega(0)=0$ and
\begin{equation}
\label{e:liftdef}
	\bar \omega(k) = \bar\omega(k-1) + (\omega(k) - \omega(k-1))_r \qquad (1\leq k \leq n).
\end{equation}
Due the nearest neighbour constraint, when $r \ge 3$ this lift operation is a bijection from $\Wcal_n^\T$ onto $\Wcal_n$.

Since each walk on the torus lifts in a one-to-one manner
to a walk on $\Z^d$, with the lifted walk having no more self-intersections than
the original walk, in general $c_{n}^{\T}\le c_n$.
A walk of length less than $r$ has the same number of intersections as its lift,
so $c_{n}^{\T}=c_n$ for $n < r$.  Our interest is in how this equality for $n<r$
is asymptotically preserved as $n$ increases, and in what happens when the torus behaviour
departs from that of $\Z^d$.  We refer to the $V$-dependent regime of $n$ values for which
$c_{n}^{\T}$ and $c_n$ have the same leading asymptotic behaviour as the \emph{dilute phase}.

We address this question for dimensions $d>4$, where the $\Z^d$ theory is essentially
complete.  In particular, it is proved in \cite{HHS98}
that for $d>4$ with $\beta>0$ sufficiently small\footnote{Although not
explicit in \cite{HHS98},
the factor $\beta$ in the error term can be extracted from the proof.}
\begin{equation}
\label{e:cnasy}
    c_n = A\mu^n (1+O(\beta n^{-(d-4)/2})),
\end{equation}
with $A = 1+O(\beta)$.
Related results with different error estimates can be found in
\cite{BS85,GI92,KLMS95,HHS98,ABR16}.  For the case of strictly self-avoiding walk
($\beta =1$) in dimensions $d > 4$ it is proved in \cite{HS92a} that
\begin{equation}
\label{e:cneta}
    c_n=A_{1}\mu_{1}^n[1+O(n^{-\eta})]
\end{equation}
where the subscripts indicate $\beta=1$ and where
$\eta$ is any positive number such
that\footnote{Presumably \eqref{e:cneta} remains true with $\eta$ replaced by $\frac{d-4}{2}$ as in
\eqref{e:cnasy} but this has not been proved.}
\begin{equation}
\label{e:etadef}
    \eta < \min\{\textstyle{\frac{d-4}{2}} , 1\}
    .
\end{equation}

In view of the above simple observation that $c_n^\T=c_n$ for $n<r$,
if $n \to \infty$ and $r \to \infty$ in such a manner that
$n < r$ then the torus walk behaves exactly as the $\Z^d$ walk, namely $c_{n}^{\T}\sim A\mu^n$.
In any
case, $c_{n}^{\T} \le c_n \sim A\mu^n$.
Our main result is the following theorem, which shows that the weakly self-avoiding walk
does not feel the effect of the torus in its leading asymptotic
behaviour at least
until the walk's length reaches $V^{1/2}$.
In the statement of Theorem~\ref{thm:dilute}, $A$ and $\mu$ have the same values as in the formula \eqref{e:cnasy} for $\Z^d$.
A related theorem for the strictly self-avoiding walk on the hypercube
is proven in \cite{Slad22}.

\begin{theorem}
\label{thm:dilute}
For $d>4$ and $C_0>0$, if $\beta >0$ is sufficiently small and
if
$n \le  C_0 V^{1/2}$, then
\begin{equation}
\label{e:cnmr}
    c_{n}^{\T} = A\mu^n
    \left[ 1+O(\beta)\Big( \frac{1}{n^{(d-4)/2}} + \frac{n^2}{V}\Big) \right]
    ,
\end{equation}
where the constant in the error term depends on $C_0$ but not on $n,V,\beta$.
\end{theorem}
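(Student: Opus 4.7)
The plan is to set up a lace expansion on the torus $\T_r^d$ and compare the resulting convolution equation term by term with its $\Z^d$ analogue, using the plateau estimate for the torus two-point function as the crucial quantitative input that controls the torus-specific correction. Concretely, I would first derive a lace expansion for the torus two-point function $c_n^\T(x)$, of exactly the same diagrammatic form as the Brydges--Spencer expansion on $\Z^d$, yielding a convolution identity
\begin{equation}
    c_{n+1}^\T(x) = \sum_{|y|_1 = 1} c_n^\T(x-y) + \sum_{m=2}^{n+1} \sum_{y \in \T_r^d} \pi_m^\T(y)\, c_{n+1-m}^\T(x-y)
\end{equation}
with torus lace coefficients $\pi_m^\T$, and, after summation over $x$, the scalar recursion
\begin{equation}
    c_{n+1}^\T = 2d\, c_n^\T + \sum_{m=2}^{n+1} \pi_m^\T\, c_{n+1-m}^\T.
\end{equation}
The analogous $\Z^d$ recursion with coefficients $\pi_m$ is the well-understood engine behind \eqref{e:cnasy}.

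Next I would decompose $\pi_m^\T = \pi_m + \delta\pi_m$, isolating a torus-specific correction $\delta\pi_m$. Via the lift map \eqref{e:liftdef}, every lace diagram contributing to $\pi_m^\T$ corresponds to a $\Z^d$ diagram whose internal propagators are allowed to ``wind'' nontrivially around the torus, and $\delta\pi_m$ is the sum of diagrams in which at least one such winding occurs. By the plateau estimate from previous work, replacing a $\Z^d$ propagator by its winding torus counterpart costs an extra factor of order $V^{-1}$ beyond the usual $\Z^d$ diagrammatic bound, yielding an estimate $|\delta\pi_m| \le c\beta \mu^m V^{-1}$, uniform in $m$ in the relevant range.

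Third, let $e_n = c_n^\T - c_n$. Subtracting the two scalar recursions gives
\begin{equation}
    e_{n+1} = 2d\, e_n + \sum_{m=2}^{n+1} \pi_m\, e_{n+1-m} + \sum_{m=2}^{n+1} \delta\pi_m\, c_{n+1-m}^\T.
\end{equation}
The forcing term, bounded via $c_k^\T \le c_k \le 2A\mu^k$ and the plateau-based bound on $\delta\pi_m$, is of size $O(\beta A \mu^{n+1} n / V)$. Iteration, equivalently inversion using the resolvent of convolution by $\pi$, then yields $|e_n| \le C\beta A \mu^n n^2 / V$ for $n \le C_0 V^{1/2}$, which combined with \eqref{e:cnasy} produces the two error contributions in \eqref{e:cnmr}: the $n^{-(d-4)/2}$ part from the $\Z^d$ estimate, and the $n^2/V$ part from the plateau correction.

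The main obstacle is the second step: proving the diagram-by-diagram bound $|\delta\pi_m| \le c\beta \mu^m V^{-1}$. Lace diagrams contain several propagators and pairings, and one must verify that a single winding already produces the full factor $V^{-1}$, that diagrams with multiple windings only improve the bound, and that the remaining $\Z^d$-like factors stay under control. This is the core of the novel torus analysis: the plateau decomposition of the torus two-point function must be inserted diagram by diagram in a manner compatible with the standard $\Z^d$ diagrammatic estimates of \cite{HHS98}.
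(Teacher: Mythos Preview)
Your overall architecture is correct and closely parallels the paper's: set up the torus lace expansion, compare $\pi_m^\T$ with $\pi_m$, and show that the difference is governed by winding contributions controlled via the plateau. The paper, however, carries out this comparison entirely at the level of generating functions rather than coefficient sequences. It writes $F(z)=1/\chi(z)=1-2dz-\hat\Pi_z$ and $\varphi(z)=1/\chi^\T(z)=1-2dz-\hat\Pi_z^\T$, bounds $\Delta(z)=\hat\Pi_z-\hat\Pi_z^\T$ by $\beta(r^2+\chi(|z|))/V$ for complex $z$ in the disk $|z|\le z_c(1-V^{-1/2})$, deduces a bound on $H(z)=\chi^\T(z)-\chi(z)=-\Delta(z)/(F(z)\varphi(z))$, and finally applies a Tauberian lemma to read off $|h_n|=|c_n^\T-c_n|\prec \beta\mu^n(n^2/V+nr^2/V)$. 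The $nr^2/V$ piece is then absorbed into the two error terms of \eqref{e:cnmr}.

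The substantive difference is that your Step~2 asks for a \emph{coefficient-level} bound $|\delta\pi_m|\le c\beta\mu^m/V$, whereas the plateau estimate you invoke is a \emph{generating-function} statement: it bounds $\sum_{u\ne 0}G_z(x+ru)$ by $\chi(z)/V$, not $\sum_{u\ne 0}c_m(x+ru)$ by $\mu^m/V$. Converting the plateau into the pointwise-in-$m$ bound you need would require near-critical coefficient estimates on $c_m(x)$ (a local CLT-type input) that go beyond what the paper uses. The paper sidesteps this by never leaving the generating-function world until the very last step, where a single application of a Tauberian lemma delivers the coefficient bound on $h_n$. Your route is in principle viable, but the claimed bound on $\delta\pi_m$ is the real content and does not follow from the plateau alone; the paper's generating-function approach is the more direct way to exploit that input. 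Note also that the paper's bound on $\Delta(z)$ carries an $r^2/V$ term (from diagram lines forced to have displacement $\gtrsim r$), which your uniform $\mu^m/V$ bound on $\delta\pi_m$ does not reflect; this term ultimately gets absorbed, but its presence signals that the diagrammatic analysis is a bit more delicate than ``one winding costs $V^{-1}$''.
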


For example, if $n=V^p$ with $p\leq \frac 12$ then \eqref{e:cnmr} asserts that
\begin{equation}
    c_{n}^{\T} = A\mu^n
    \left[ 1+O(\beta)\Big( \frac{1}{n^{(d-4)/2}} + \frac{1}{n^{(1-2p)/p}}\Big) \right]
    \qquad (n=V^p).
\end{equation}
The $n^{-(d-4)/2}$ term is dominant when
\begin{equation}
    \frac{1-2p}{p} > \frac{d-4}{2}
\end{equation}
which happens when $p< \frac 2d$, i.e., when $n$ grows more slowly than $r^2$.
As $p$ increases from $\frac 2d$ to $\frac 12$, the error term $n^{-(1-2p)/p}$
becomes increasingly significant and stops decaying at $p=\frac 12$.
We regard this as a reflection of the
conjectural failure of $A\mu^n$ to accurately capture
the asymptotic behaviour of $c_n^\T$ once $n$ reaches $V^{1/2}$
(see Section~\ref{sec:window}
for further discussion of this point).  If the $n^{-(1-2p)/p}$ term is indeed sharp,
then the effect of the torus enters into the error term already when $p=\frac 2d$, though
not yet in the leading term.  However, note that
Theorem~\ref{thm:dilute} does
address the behaviour of $c_n^\T$ when $n$ is
closer to $V^{1/2}$ than $V^p$ with $p< \frac 12$, for example $n = V^{1/2}/(\log V)$.

Finally we remark that we are investigating a regime for which it is not possible
to see the typical distance travelled after $n$ steps being of order $n^{1/2}$
(as it is for self-avoiding walk on $\Z^d$ for $d>4$ \cite{BS85,HS92a}).
For example,
for $n=V^p$ we have $n^{1/2} = r^{pd/2}$ which for $p>\frac 2d$ is much greater
than the maximal torus displacement.  Such diffusive behaviour on the torus would
instead be visible as a Brownian scaling limit on a continuum torus.
This phenomenon is studied in detail in \cite{Mich22}.
It would also be of interest to attempt to prove an analogue of
Theorem~\ref{thm:dilute} for the strictly self-avoiding walk ($\beta=1$)
in dimensions $d >4$, but our method relies on the control of the near-critical
two-point function from \cite{Slad20_wsaw} and this is currently available only
for small $\beta$.

\subsection{Generating functions}

We define the \emph{susceptibility} for $\Z^d$ and for the torus $\T_r^d$,
for complex $z\in \C$, by
\begin{equation}
\label{e:chidefT}
    \chi(z)
    = \sum_{n=0}^\infty c_n z^n,
    \qquad
    \chi^\T(z)
    = \sum_{n=0}^\infty c_n^\T z^n.
\end{equation}
The radius of convergence for $\chi$ is $z_c=\mu^{-1}$, whereas $\chi^\T$ is entire
when $\beta \in (0,1]$ by \eqref{e:cnTrough}.
As $z \to z_c$ the susceptibility on $\Z^d$ diverges linearly (this is well-known
and we provide a proof in Section~\ref{sec:ELpf})
\begin{equation}
\label{e:chiZdasy}
    \chi(z)  = \frac{A}{1- z/z_c}\left( 1 + \frac{O(\beta)}{(1-z/z_c)^{(d-4)/2}} \right) ,
\end{equation}
consistent with \eqref{e:cnasy}.
For $z\in \C$ with $|z|< z_c$
we also define the generating function $H$ and its coefficients $h_n$ by
\begin{equation}
    H(z) = \chi^\T(z) - \chi(z) = \sum_{n=0}^\infty h_n z^n.
\end{equation}
By definition and by \eqref{e:cnasy},
\begin{equation}
    c_n^\T = c_n + h_n = A\mu^n \left[ 1 + O(\beta n^{-(d-4)/2}) + A^{-1} h_n z_c^{n} \right],
\end{equation}
so it suffices to prove that
\begin{equation}
\label{e:hmu}
    h_n z_c^{n} = O(\beta n^2/V)+ O(\beta n^{-(d-4)/2})
\end{equation}
when $n \le C_0 V^{1/2}$.

Let
\begin{equation}
    \zeta = z_c(1-V^{-1/2}), \qquad U = \{ z\in \C : |z| < \zeta\}.
\end{equation}
The choice $\zeta = z_c(1-V^{-1/2})$ is inspired by the conjectural form of the \emph{scaling window} which is discussed in detail in Section~\ref{sec:window}: $\zeta$ is a typical
point in the dilute side of the scaling window.  This choice is dual to our restriction
to walks whose length $n$ is at most $O(V^{1/2})$.
We will prove the following theorem.

\begin{theorem}
\label{thm:susceptibility}
Let $d>4$ and let $\beta>0$ be sufficiently small.
Then
\begin{equation}
\label{e:Hbd}
    |H(z)| \prec \frac{\beta }{V} \frac{1}{|1-z/\zeta|^{2}(1-|z|/\zeta)}
    + \frac{\beta }{V} \frac{r^2}{|1-z/\zeta|^{2}}
    \qquad (z\in U).
\end{equation}
\end{theorem}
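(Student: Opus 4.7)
The approach is to apply the lace expansion to both $\chi$ and $\chi^\T$ and then to control the resulting difference using the plateau for the torus two-point function from \cite{Slad20_wsaw}. The lace expansion yields identities of the form
\begin{equation}
\chi(z) \;=\; \frac{1}{1-2dz-\hat\Pi_z(0)}, \qquad \chi^\T(z) \;=\; \frac{1}{1-2dz-\hat\Pi^\T_z(0)},
\end{equation}
where $\hat\Pi_z$ and $\hat\Pi^\T_z$ denote the lace expansion coefficients on $\Z^d$ and on $\T_r^d$ respectively. Subtracting the two reciprocals yields the central identity
\begin{equation}
H(z) \;=\; \chi(z)\,\chi^\T(z)\,\bigl[\hat\Pi^\T_z(0) - \hat\Pi_z(0)\bigr].
\end{equation}
The plan is to prove three estimates on $U$: the prefactor bounds $|\chi(z)|\prec|1-z/\zeta|^{-1}$ and $|\chi^\T(z)|\prec|1-z/\zeta|^{-1}$, together with the central bound
\begin{equation}
\bigl|\hat\Pi^\T_z(0) - \hat\Pi_z(0)\bigr| \;\prec\; \frac{\beta}{V}\left[\frac{1}{1-|z|/\zeta} + r^2\right],
\end{equation}
which combined yield \eqref{e:Hbd}.

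The bound on $\chi$ follows from \eqref{e:chiZdasy} after a short calculation that relates $|1-z/z_c|$ to $|1-z/\zeta|$ on $U$ via the definition $\zeta=z_c(1-V^{-1/2})$. The bound on $\chi^\T$ is not a priori available and must be established by a bootstrap: I assume the target inequality with an inflated constant on a closed sub-disc of $U$, then use the central identity together with the estimate on $\hat\Pi^\T_z(0)-\hat\Pi_z(0)$ to improve the constant; since $\chi^\T$ is entire by \eqref{e:cnTrough}, continuity permits the bootstrap region to be extended to the entire open disc $U$.

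The core of the argument is the estimate on the $\hat\Pi$-difference. Each lace-expansion diagram is a product of two-point functions $G^\T_z$ summed over vertices; writing $G^\T_z = G_z + (G^\T_z - G_z)$ and expanding, the purely $\Z^d$ contributions cancel against the analogous expansion of $\hat\Pi_z(0)$, and every surviving term carries at least one factor of the plateau increment $G^\T_z - G_z$. Two complementary plateau bounds from \cite{Slad20_wsaw} then drive the estimate: a uniform bound of order $V^{-1}\chi(z)$, which when inserted into one edge of a diagram and combined with standard triangle- or bubble-type diagrammatic estimates on the remaining factors produces the $\beta/[V(1-|z|/\zeta)]$ contribution; and a short-distance bound of order $V^{-1}|x_r|^2$, arising from discrete Taylor expansion of the dual-torus Fourier modes together with the lift \eqref{e:liftdef}, which produces the $\beta r^2/V$ contribution after pairing with a second-moment estimate for $G_z$ truncated at the torus scale.

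The principal difficulty will be the diagrammatic bookkeeping for the $\hat\Pi$-difference: each lace-expansion diagram has several two-point-function factors where the plateau increment could be inserted, and this insertion must be chosen to balance the gain of $V^{-1}$ against the loss of summability the plateau bounds entail, especially near the critical scale $|1-z/\zeta|\to 0$. A secondary difficulty is closing the bootstrap for $\chi^\T$ on all of $U$: the explicit dependence on $\beta$ in the plateau bounds must be tracked carefully to verify that the self-consistent equation implied by the central identity does close, provided $\beta$ is sufficiently small.
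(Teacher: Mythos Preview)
Your overall framework matches the paper's: the identity $H(z)=\chi(z)\chi^\T(z)[\hat\Pi^\T_z-\hat\Pi_z]$ is exactly the decomposition used, and the three target estimates you list are the right ones.  Two points, however, deserve correction.

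First, the bootstrap for $\chi^\T$ is unnecessary.  The paper obtains the lower bound $|\varphi(z)|\succ|1-z/\zeta|$ (equivalently, your upper bound on $|\chi^\T|$) \emph{directly}: since $\varphi(z)=1-2dz-\hat\Pi^\T_z$, it suffices to show $\hat\Pi^\T_z$ and $\partial_z\hat\Pi^\T_z$ are $O(\beta)$ uniformly on $U$.  This follows from the standard diagrammatic bounds once one knows $\|G^\T_z\star G^\T_z\|_\infty\prec 1$ on $U$, and that in turn is immediate from the plateau \emph{upper} bound $G^\T_z\le G_z+C\chi(z)/V$ together with $\chi(|z|)^2\le O(V)$ on $U$.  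No self-consistent argument is needed.

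Second, and more seriously, your plan for the $\hat\Pi$-difference has a structural gap.  The lace-expansion coefficient $\Pi^{\T,(N)}_z$ is \emph{not} literally a product of two-point functions $G^\T_z$ summed over vertices; that description applies only to the diagrammatic \emph{upper bound}.  The exact $\Pi^{\T,(N)}_z$ retains mutual avoidance constraints between subwalks encoded in the compatible-edge product $\prod_{s't'\in\Ccal(L)}(1+\beta U_{s't'})$.  Consequently, writing $G^\T_z=G_z+(G^\T_z-G_z)$ and ``expanding'' does not produce a cancellation of the $\Z^d$ contribution against $\hat\Pi_z$: replacing every $G^\T_z$ by $G_z$ in the diagrammatic bound does not recover $\hat\Pi_z$.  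The paper instead compares at the level of the \emph{walk interaction}: it lifts torus walks bijectively to $\Z^d$-walks and writes $\hat\Pi^{\T,(N)}_z$ as a sum over $\Z^d$-walks with the torus interaction $U^\T_{st}$ (equal to $-1$ when $\omega(s)$ and $\omega(t)$ have the same torus projection).  The difference $\hat\Pi^{\T,(N)}_z-\hat\Pi^{(N)}_z$ then splits into a piece where a \emph{lace edge} detects a torus-but-not-$\Z^d$ intersection (forcing a subwalk to wrap around the torus, hence to have displacement $\ge r/3$; this is the source of the $r^2/V$ term, not a Taylor expansion of Fourier modes) and a piece where a \emph{compatible edge} does so.  Each piece is then bounded diagrammatically on $\Z^d$, using the decay of $G_z$ at displacement $r$ and Cauchy--Schwarz manipulations, to obtain $(C\beta)^N(r^2+\chi(|z|))/V$.
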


Theorem~\ref{thm:susceptibility} is useful in combination with the
Tauberian theorem stated in the next lemma.
The lemma makes precise the intuition that if a power
series $f(z) = \sum_{n=0}^\infty a_n z^n$ has radius of convergence
$R>0$ and if $|f(z)|$ is bounded above by a multiple of $|R-z|^{-b}$
on the disk of radius $R$, with $b \geq 1$,
then $a_n$ should be not much worse than order $R^{-n}n^{b-1}$.
For a proof of the lemma
 see
\cite[Lemma~3.2(i)]{DS98}.

\begin{lemma}
\label{lem:Tauberian}
Suppose that the power series $f(z)=\sum_{n=0}^\infty a_n z^n$ has radius of convergence
at least $R>0$,
and that there exist $b>1$ and $c\ge 0$ such that
\begin{equation}
    |f(z)| \le K_1 \frac{1}{|1-z/R|^b (1-|z|/R)^c}
\end{equation}
for all $z \in \C$ with $|z|<R$.
Then $|a_n| \le K_1 K_2 n^{b+c-1}R^{-n}$ with $K_2$ depending only on $b$, $c$.
\end{lemma}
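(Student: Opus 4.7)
The plan is to extract $a_n$ via Cauchy's integral formula and to choose the integration circle optimally in terms of $n$. For any $\rho \in (0,R)$ we have
\begin{equation*}
    a_n = \frac{1}{2\pi i} \oint_{|z|=\rho} \frac{f(z)}{z^{n+1}} \D z,
    \qquad\text{hence}\qquad
    |a_n| \leq \frac{1}{2\pi \rho^n} \int_0^{2\pi} |f(\rho e^{i\theta})| \D \theta.
\end{equation*}
I would take the natural choice $\rho = R(1-1/n)$, which balances the two competing quantities $\rho^{-n}$ and $(1-\rho/R)^{-1}$. With this choice $\rho^{-n} \leq R^{-n}(1-1/n)^{-n} \leq e R^{-n}$, and $1-|z|/R = 1/n$ on the circle, so the hypothesis gives
\begin{equation*}
    |f(\rho e^{i\theta})| \leq K_1 \frac{n^c}{|1-\rho e^{i\theta}/R|^b}.
\end{equation*}

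The remaining task is to estimate $I_n \bydef \int_0^{2\pi} |1-we^{i\theta}|^{-b} \D\theta$ with $w = 1-1/n$. Writing $|1-we^{i\theta}|^2 = (1-w)^2 + 2w(1-\cos\theta) \asymp 1/n^2 + \theta^2$ for $\theta\in[-\pi,\pi]$, substituting $\theta = u/n$, and using $b>1$ so that the resulting integral $\int_{-\infty}^\infty (1+u^2)^{-b/2} \D u$ converges, one obtains $I_n \asymp n^{b-1}$. Putting everything together,
\begin{equation*}
    |a_n| \leq \frac{1}{2\pi} \cdot e R^{-n} \cdot K_1 n^c \cdot I_n \leq K_1 K_2 n^{b+c-1} R^{-n},
\end{equation*}
with $K_2$ depending only on $b$ and $c$, which is the claim.

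The only nontrivial step is the estimate $I_n \asymp n^{b-1}$, and the condition $b>1$ in the hypothesis is precisely what is needed for the tail integral to converge; otherwise a logarithmic correction or worse would appear. This is not really an obstacle, however, as it is a standard one-variable calculus estimate. The rest of the argument is the textbook Cauchy-circle trade-off between the growth of $z^{-n}$ near the boundary of convergence and the controlled singular behaviour of $f$ on that boundary.
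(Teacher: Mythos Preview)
Your proof is correct and follows the standard Cauchy-circle argument; this is essentially the same method as in the reference \cite{DS98} that the paper cites for this lemma (the paper itself does not reproduce a proof). One small slip: the inequality $(1-1/n)^{-n}\le e$ is in the wrong direction, since $(1-1/n)^n<e^{-1}$ for all finite $n\ge 2$; but $(1-1/n)^{-n}$ is still bounded (by $4$, attained at $n=2$), so the argument goes through with an adjusted constant absorbed into $K_2$.
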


We now show that Theorem~\ref{thm:dilute} is an
immediate corollary of Theorem~\ref{thm:susceptibility}.

\begin{proof}[Proof of Theorem~\ref{thm:dilute}]
The combination of Theorem~\ref{thm:susceptibility} with Lemma~\ref{lem:Tauberian}
(with $f=H$, $R=\zeta$)
immediately gives
\begin{equation}
    |h_n| \prec
    \beta \Big(\frac{n^2}{V} + \frac{nr^2}{V}\Big)\zeta^{-n}
    .
\end{equation}
Therefore, by definition of $\zeta$,
\begin{equation}
    |h_n|z_c^n \prec  \beta\Big(\frac{ n^2}{V}+\frac{nr^2}{V}\Big)  \frac{1}{(1-V^{-1/2})^n}.
\end{equation}
Since $n \le C_0 V^{1/2}$ by assumption, the above right-hand side is bounded by
a $C_0$-dependent constant multiple of
$\beta V^{-1}(n^2+nr^2)$.
Finally, the error term
$\beta nr^2/V$ can be absorbed into one of the two error terms of \eqref{e:hmu} since
\begin{equation}
    \frac{nr^2}{V}
    \le
    \begin{cases}
        \frac{n^2}{V} & (n \ge r^2)
        \\
        \frac{n}{r^{d-2}} \le \frac{n}{n^{(d-2)/2}} = \frac{1}{n^{(d-4)/2}} & (r^2 \ge n).
    \end{cases}
\end{equation}
This proves \eqref{e:hmu} and therefore completes the proof.
\end{proof}

It remains to prove Theorem~\ref{thm:susceptibility}.
The proof is based on the lace expansion
for self-avoiding walk \cite{BS85} (see
\cite{MS93,Slad06} for introductions), both on $\Z^d$ and on the torus.
Previously the lace expansion has been applied to self-avoiding
walk only on the infinite lattice.
The proof also relies in an important way
on the decay of the near-critical $\Z^d$ two-point function, and
the ``plateau'' for the torus two-point function implied by that decay, which are obtained
in \cite{Slad20_wsaw} and which we discuss in detail in Section~\ref{sec:plateau}.

Before turning to the proof of Theorem~\ref{thm:susceptibility}, we complete
Section~\ref{sec:1} with further context for Theorem~\ref{thm:dilute} as well as conjectures
concerning potential extensions.

\subsection{Susceptibility and expected length}
\label{sec:chiEL}

\subsubsection{Susceptibility}

It is a trivial consequence of the inequality $c_n^\T \le c_n$ that in all dimensions
and for all $\beta \in [0,1]$ the susceptibilities obey the inequality $\chi^\T(z) \le \chi(z)$
for all $z \in [0,z_c]$.  For $d >4$ and sufficiently small $\beta>0$ a complementary
lower bound is proved in \cite{Slad20_wsaw}, as we discuss further in Section~\ref{sec:plateau}.
Together with the general upper bound, the lower bound states that there is a constant $c_1$ such that
\begin{equation}
\label{e:chiupperlower}
    c_1 \chi(z) \le \chi^\T(z) \le \chi(z),
\end{equation}
with the lower bound valid for any real
$z \in  [ z_c- c_3 V^{-2/d}, z_c - c_4\beta^{1/2} V^{-1/2} ]$
for sufficiently large constants $c_3,c_4 >0$ (independent of $\beta$).
We will prove the following extension of \eqref{e:chiupperlower}.

\begin{theorem}
\label{thm:chi_torus_good}
Let $d >4$ and let $\beta$ be sufficiently small.
For any $z\in \C$ with $|z|\le z_c(1-V^{-1/2})$,
\begin{align}
\lbeq{chichiT}
    \chi^\T(z) & =
    \chi(z) \left( 1 + O\Big(\beta \frac{\chi(z)(r^2 +\chi(|z|))}{V} \Big) \right).
\end{align}
\end{theorem}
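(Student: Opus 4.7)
The plan is to exploit the lace-expansion identity that relates $\chi(z)$ and $\chi^\T(z)$. Both admit representations $\chi(z) = 1/(1-F(z))$ and $\chi^\T(z) = 1/(1-F^\T(z))$, where $F$ and $F^\T$ are the zero-Fourier components of the lace-expansion kernels on $\Z^d$ and $\T_r^d$ respectively. The elementary resolvent identity
\begin{equation*}
    \chi^\T(z) - \chi(z) = \chi(z)\chi^\T(z)\Delta(z),
    \qquad \Delta(z) := F^\T(z) - F(z),
\end{equation*}
then reduces Theorem~\ref{thm:chi_torus_good} to an estimate on the kernel difference $\Delta(z)$.

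The key ingredient is the pointwise kernel bound
\begin{equation*}
    |\Delta(z)| \prec \frac{\beta(r^2+\chi(|z|))}{V}
    \qquad (|z|\le \zeta).
\end{equation*}
This should come out of the same diagram-wise lace-expansion analysis used to prove Theorem~\ref{thm:susceptibility}. The $\chi(|z|)/V$ contribution corresponds to inserting a single plateau from \cite{Slad20_wsaw} into a torus lace-expansion diagram and comparing with its $\Z^d$ counterpart; the $r^2/V$ contribution captures short-distance wrap-around effects. In $d>4$ the remaining diagrammatic sums are controlled by convergent $\Z^d$ bubble-type quantities, so only the extra $\chi(|z|)/V$ or $r^2/V$ factor carried by the torus/$\Z^d$ difference survives.

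Granted this kernel bound, the rest is a routine bootstrap. Rewriting the identity as $\chi^\T(z)/\chi(z) = 1/(1-\chi(z)\Delta(z))$, one verifies the smallness of $|\chi(z)\Delta(z)|$: on $|z|\le \zeta$ the bound $|\chi(z)|\le \chi(|z|)\prec V^{1/2}$ together with the kernel estimate yields $|\chi(z)\Delta(z)|\prec \beta(r^{2-d/2}+1)\prec \beta$ (using $V=r^d$ and $d>4$). For $\beta$ sufficiently small this is bounded by $1/2$, so the denominator is $\asymp 1$ and
\begin{equation*}
    \left|\frac{\chi^\T(z)}{\chi(z)}-1\right|
    \prec |\chi(z)\Delta(z)|
    \prec \frac{\beta|\chi(z)|(r^2+\chi(|z|))}{V},
\end{equation*}
which is exactly \refeq{chichiT}.

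The principal obstacle is the pointwise kernel bound on $\Delta(z)$ throughout the complex disk $\{|z|\le \zeta\}$: it requires a term-by-term diagram comparison between the torus and $\Z^d$ lace expansions using the plateau and near-critical two-point function bounds of \cite{Slad20_wsaw}. Once this ingredient is in place, the algebraic rearrangement and the bootstrap verification of smallness are immediate.
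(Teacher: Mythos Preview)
Your proposal is correct and follows essentially the same route as the paper. The paper defines $F(z)=1/\chi(z)$, $\varphi(z)=1/\chi^\T(z)$, and $\Delta(z)=\varphi(z)-F(z)$ (the negative of your $\Delta$, a cosmetic difference), invokes Proposition~\ref{prop:AD-new} for the bound $|\Delta(z)|\prec \beta(r^2+\chi(|z|))/V$, and then writes $\chi(z)=\chi^\T(z)(1+\chi(z)\Delta(z))$ to conclude; your resolvent identity and bootstrap are the same manipulation in slightly different dress.
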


In particular, since $|\chi(z)| \le \chi(|z|)\le O(V^{1/2})$
when $|z|\le z_c(1-V^{-1/2})$, the error term in \refeq{chichiT}
is bounded by $O(\beta)$ uniformly in $|z|\le z_c(1-V^{-1/2})$ and hence in this disk we have
\begin{align}
    \chi^\T(z) & =
    \chi(z) \left( 1 + O(\beta ) \right).
\end{align}
Theorem~\ref{thm:chi_torus_good} can be extended to $|z|\le z_c(1-s V^{-1/2})$
for any small positive $s$ at the cost of taking $\beta$ to be small depending on
$s$, but we do not emphasise this option in the statement of the theorem.  We discuss this
further in Remark~\ref{rk:beta-s}.

\subsubsection{Expected length}
\label{sec:EL}

It is common in the analysis of self-avoiding walk on a finite graph to introduce
a probability measure on variable length walks.
In our torus context, the set of such walks is
$\cup_{x\in \T_r^d}\cup_{n=0}^\infty \Wcal^\T_n(x)$,
and the measure is defined for $z \ge 0$ by
\begin{equation}
\label{e:PzT}
    \P_z^\T(\omega)
    = \frac{1}{\chi^\T(z)} z^{|\omega|}\prod_{0\le s<t\le |\omega|}(1+\beta U_{st}(\omega))
\end{equation}
with $|\omega|$ equal to the number of steps of the walk $\omega$.
The \emph{length} is the discrete random variable $L$ whose probability mass function
is given, for $n \ge 0$, by
\begin{equation}
    \P_z^\T(L=n) = \frac{1}{\chi^\T(z)} c^\T_n z^n.
\end{equation}
The \emph{expected length} is therefore
\begin{equation}
    \E_z^\T L = \frac{z\partial_z \chi^\T(z)}{\chi^\T(z)}.
\end{equation}

\begin{theorem}
\label{thm:EL}
Let $d>4$ and let $\beta>0$ be sufficiently small.
Let $z\in [\frac 12 z_c,z_c(1-V^{-1/2})]$.  Then
\begin{align}
	\E_{z}^{\T}L
    &=
    \frac{z}{z_c}\frac{1}{1-z/z_c}
    \left( 1 + O\Big(\frac{\beta}{V(1-z/z_c)^2}\Big)
    + O\Big(\frac{\beta  r^2}{V(1-z/z_c)}\Big)
    + O(\beta(1-z/z_c)^{1-\frac12(d-6)_+} )
    \right) ,
\end{align}
where in the exponent $x_+=\max\{0,x\}$ and the third error term is
replaced by $O(\beta (1-z/z_c)|\log(1-z/z_c)|)$ when $d=6$.
\end{theorem}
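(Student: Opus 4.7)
The proof proceeds by writing $\E_z^\T L = z\partial_z \log \chi^\T(z)$ and using Theorem~\ref{thm:chi_torus_good} to decompose $\chi^\T = \chi \cdot (1+E)$, where $|E(z)| \prec \beta \chi(z)(r^2 + \chi(|z|))/V$ on the disk $|z| \le z_c(1-V^{-1/2})$. Differentiation yields
\begin{equation}
\E_z^\T L = \frac{z\chi'(z)}{\chi(z)} + \frac{zE'(z)}{1+E(z)},
\end{equation}
so the $\Z^d$ piece $z\chi'/\chi$ is expected to produce the main term together with the third error term of the theorem, while the torus correction $zE'/(1+E)$ should account for the first two ($V$-dependent) error terms.

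For the $\Z^d$ piece, I would write $\chi(z) = \frac{A}{1-z/z_c}(1+R(z))$ with $R$ controlled by \refeq{chiZdasy}, so that $z\chi'(z)/\chi(z) = \frac{z/z_c}{1-z/z_c} + zR'(z)/(1+R(z))$. Applying Cauchy's integral formula on a circle of radius proportional to $z_c(1-z/z_c)$ converts the pointwise bound on $R$ into a bound on $R'$; factoring out $\frac{z/z_c}{1-z/z_c}$ then produces a relative error of order $\beta(1-z/z_c)^{1-\frac12(d-6)_+}$, with the logarithmic correction at $d=6$ coming from the marginal behaviour of $\sum_n n^{-(d-4)/2}z^n$ at that dimension.

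For the torus piece, the bound of Theorem~\ref{thm:chi_torus_good}, combined with $|\chi(z)|, \chi(|z|) \prec V^{1/2}$ on the disk and $r^2/V^{1/2} \le 1$ for $d>4$, yields $|E(z)| = O(\beta)$ uniformly, so $1+E$ stays bounded away from zero once $\beta$ is small. To estimate $E'(z_0)$ at $z_0 \in [\tfrac12 z_c, z_c(1-V^{-1/2})]$, I would again apply Cauchy's integral formula on a circle of radius of order $z_c(1-z_0/z_c)$ around $z_0$, on which $|\chi(z)|$ and $\chi(|z|)$ are both of order $1/(1-z_0/z_c)$. Writing $\delta = 1-z_0/z_c$, this gives $|E'(z_0)| \prec \beta/(V\delta^3) + \beta r^2/(V\delta^2)$, and dividing by the leading term $\frac{z/z_c}{\delta}$ produces precisely the first two error terms of the theorem.

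The main technical obstacle is ensuring that the Cauchy contour around $z_0$ lies inside the domain where Theorem~\ref{thm:chi_torus_good} applies, especially when $z_0$ is close to the outer endpoint $z_c(1-V^{-1/2})$, where the natural circle of radius $z_c\delta/2$ already touches the boundary. I would handle this by invoking Remark~\ref{rk:beta-s}, which provides the same estimate on the slightly enlarged disk $|z|\le z_c(1-sV^{-1/2})$ at the cost of a smallness condition on $\beta$ depending on $s$; the constant $s$ can then be chosen once and absorbed into the implicit constants of the final bound.
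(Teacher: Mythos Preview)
Your argument is correct and yields the same error terms as the paper, but the route for the torus correction is genuinely different. The paper writes $\E_z^\T L$ in the multiplicative form
\[
    \E_z^\T L=\frac{z\partial_z\chi(z)}{\chi(z)}\cdot\frac{1+\Delta'(z)/F'(z)}{1+\Delta(z)/F(z)},
\]
and handles the second factor by invoking not only Proposition~\ref{prop:AD-new} (the bound on $\Delta$) but also Proposition~\ref{prop:AD-new-prime}, a direct diagrammatic bound on $z\Delta'(z)$ proved by repeating the entire lace-expansion comparison of Section~\ref{sec:AD} with an extra vertex on every diagram. Your Cauchy argument bypasses Proposition~\ref{prop:AD-new-prime} completely: once Theorem~\ref{thm:chi_torus_good} is known on the slightly enlarged disk (via Remark~\ref{rk:beta-s} with, say, $s=\tfrac12$), the derivative of $E$ at $z_0$ is controlled by the supremum of $|E|$ on a circle of radius $\asymp z_c(1-z_0/z_c)$, and this immediately reproduces the $\beta/(V\delta^2)$ and $\beta r^2/(V\delta)$ relative errors. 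The cost is only that $\beta$ must be small enough for the fixed choice $s=\tfrac12$, which is harmless. For the $\Z^d$ piece the paper instead uses the Abelian Lemma~\ref{lem:ELZd}, summing $nc_nz^n$ directly from \eqref{e:cnasy}; your Cauchy estimate on $R$ is equivalent, though you should note that \eqref{e:chiZdasy} and Lemma~\ref{lem:ELZd} are stated for real $z$ and the complex-$z$ version (with $1-|z|/z_c$ in the error) needed on your contour follows from the same series manipulation. In short, your approach trades the derivative diagrammatics of Proposition~\ref{prop:AD-new-prime} for a one-line complex-analytic step plus the mild disk enlargement; the paper's approach stays on the original disk and exhibits $\Delta'$ explicitly, which has independent interest but is heavier.
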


In particular, if $z=z_c/(1+V^{-p})$ with $p< \frac 12$ then Theorem~\ref{thm:EL} gives
\begin{align}
	\E_{z_c/(1+V^{-p})}^{\T}L
    &  \sim
    V^p
\end{align}
(with an explicit error term).
For $z=z_c/(1+sV^{-1/2})$
with $s\ge 1$, it instead gives
\begin{align}
	\E_{z_c/(1+sV^{-1/2})}^{\T}L
    &=
    \frac{1}{s}V^{1/2}
    \left( 1 + O(\beta) \right) .
\end{align}
We conjecture that for any $\beta \in (0,1]$ and any real $s$ (positive or negative or zero) there exist constants $c_s,C_s$ such that
\begin{equation}
\label{e:EL}
    c_s V^{1/2} \le \E_{z_c(1-sV^{-1/2})}^{\T}L \le C_s V^{1/2}.
\end{equation}
However our results do not go beyond the range $s \ge 1$ stated in Theorem~\ref{thm:EL},
in the sense that in our proof decreasing $s$ requires that $\beta$ also
be decreased, as discussed in Remark~\ref{rk:beta-s}.

\subsection{Self-avoiding walk on the complete graph}
\label{sec:Kn}

Statistical mechanical models above their upper critical dimension often have
the same critical behaviour that occurs when the model is formulated on the complete
graph.  A classic example of this is the Ising model in dimensions $d>4$
and its complete graph version known
as the Curie--Weiss model.  So it is natural to compare Theorems~\ref{thm:dilute}
and \ref{thm:EL} with
the behaviour of self-avoiding walk on the complete graph.

The number of $n$-step (strictly) self-avoiding walks on the complete graph on $V$ vertices is
simply $c_n^{\mathbb{K}} = \frac{v!}{(v-n)!}$, where $v=V-1$.
In the limit in which $v \to \infty$, and assuming for simplicity
that $n=o(v^{2/3})$ (this implies in particular that $v-n\to \infty$), a
calculation using Stirling's formula shows that
\begin{equation}
\label{e:cnKn}
    c_n^{\mathbb K}
     = \frac{v!}{(v-n)!} = v^n e^{-n^2/2v} [1+o(1)].
\end{equation}
This shows that on the complete graph the
leading asymptotic behaviour of the
number of $n$-step self-avoiding walks is purely exponential $v^n$ as long as $n \ll V^{1/2}$,
and that a finite-volume correction occurs once $n$ reaches and exceeds $V^{1/2}$.
This could be called the ``birthday effect'' since $c_n^{\mathbb K}$
is the number of different ways
that $n$ individuals can have distinct birthdays  when $v=365$.

Despite its simplicity,
a thorough analysis of the critical behaviour of self-avoiding walk on the complete graph
has only recently been given in \cite{DGGNZ19,Slad20}.
In \cite[Section~1.5]{Slad20}, it is proposed that a correct definition of
a critical point for self-avoiding
walk on a high-dimensional transitive graph $\mathbb{G}$ with $V$ vertices
is the value of $z_c^{\mathbb{G}}$ for which the
susceptibility (the generating function for $c_n$ on the graph $\mathbb{G}$) equals $\lambda V^{1/2}$,
for any fixed $\lambda >0$, and that the critical scaling window consists of those $z$ for
which
$|z-z_c^{\mathbb{G}}| \le O(z_c^{\mathbb{G}} V^{-1/2})$.
While this definition remains a reasonable one for the torus $\T_r^d$ as an
\emph{intrinsic} definition of the critical point, there is also the \emph{extrinsic}
candidate $z_c=\mu^{-1}$ which is the critical point for $\Z^d$.  By making use of
the essentially complete understanding of the weakly self-avoiding walk on $\Z^d$ for
$d>4$, we are able to work in this paper without
explicitly using the intrinsically defined critical value.

On the complete graph,
the susceptibility $\chi^{\mathbb{K}}(z) = \sum_{n=0}^\infty c_n^{\mathbb K} z^n$
can be written exactly in terms of the incomplete Gamma function.  This allows
for an explicit description of the phase transition in \cite[Theorem~1.1]{Slad20}, as follows.
Let $\alpha> \frac 12$.  If $z_V \le V^{-1}(1-V^{-\alpha})$ then
$\chi^\mathbb{K}(z_V) \sim (1-Vz_V)^{-1}$.  This is the dilute phase.
The critical window consists of values of the
form $z_V= V^{-1}(1+ s V^{-1/2})$ with $s \in \R$, and in this case
$\chi^\mathbb{K}(z_V)$ is asymptotically an explicit $s$-dependent multiple of $V^{1/2}$.
In the dense phase, which includes $z_V \ge V^{-1}(1+V^{\alpha-1})$, the susceptibility $\chi^\mathbb{K}(z_V)$
grows exponentially in $V$.
Similarly, on the complete graph the \emph{expected length}
makes a transition from bounded when deep in the dilute phase, to order $V^{1/2}$ in the critical window,
to order $V$ deep in the dense phase \cite{DGGNZ19,Slad20}.

\subsection{The scaling window, the dense phase, and boundary conditions}
\label{sec:window}

The complete graph provides a guide for what can be expected for self-avoiding walk
on other high-dimensional graphs (without boundary).  In particular, it
can be expected that for the torus $\T_r^d$ with $d>4$
there is a scaling window of the form $z=z_c(1+sV^{-1/2})$ for
$s \in \R$, within which the susceptibility and expected length both scale like $V^{1/2}$,
below which both remain bounded (the dilute phase), and above which the susceptibility and expected length
are respectively exponentially large and of order $V$ (the dense phase).  In this section, we elaborate on this picture.

\subsubsection{Scaling window and dense phase}

By analogy with the theory of self-avoiding walk on the complete graph
discussed in Section~\ref{sec:Kn}, we are led to the
conjecture that, for weakly or strictly self-avoiding walk
on the torus $\T_r^d$ in dimensions $d>4$, the interval $z \in (0,\infty)$ is
divided into three regimes.  For $z$ written as $z=z_c(1+\epsilon)$
with $\epsilon \in (-1,\infty)$, these regimes are:
\begin{itemize}
\item the \emph{dilute phase} $\epsilon \ll - V^{-1/2}$:
\[
    \chi^{\T} \asymp \epsilon^{-1},
    \qquad
    c_n^{\T} \sim A\mu^n \;\; \text{for $n \ll V^{1/2}$},
    \qquad
    \E_z^{\T}L \asymp \epsilon^{-1} ;
\]
\item the \emph{critical window} $|\epsilon| \asymp V^{-1/2}$:
\[
     \chi^{\T} \asymp V^{1/2}, \qquad
    c_n^{\T} \asymp  \mu^n \;\; \text{for $n \asymp V^{1/2}$},
    \qquad
    \E_z^{\T}L \asymp V^{1/2};
\]
\item the \emph{dense phase} $\epsilon \gg V^{-1/2}$:
\[
    \chi^{\T}  \;\;\text{exponential in $V$},
    \qquad
    c_n^{\T} \ll \mu^n \;\;\text{for $n \gg V^{1/2}$},
    \qquad
    \E_z^{\T}L \asymp V \frac{\epsilon}{1+\epsilon}
    .
\]
\end{itemize}
Theorems~\ref{thm:dilute} and \ref{thm:chi_torus_good}--\ref{thm:EL}
are consistent with the above
when $\epsilon \le -V^{-1/2}$.
Closely related results have been obtained for self-avoiding walk on the hypercube
\cite{Slad22}.
The remainder of the scaling window, for which $\epsilon =sV^{-1/2}$ with $s>-1$,
as well as the dense phase, for which $\epsilon \gg  V^{-1/2}$,
remain open for self-avoiding walk on the torus.
Investigations of different questions concerning the dense phase of self-avoiding walk can
be found in \cite{BGJ05,DKY14,GI95,GJ22,Yadi16}.

A quantitative conjecture for the susceptibility in the scaling window, which adds considerable
precision to the scaling window, has very recently been made in \cite{MPS23}
on the basis of a renormalisation group analysis of the
$4$-dimensional $n$-component hierarchical $|\varphi|^4$ model.\footnote{$d=4$ involves
corrections both to the window size and susceptibility which are logarithmic in the volume.}
A related conjecture for the expected length appeared earlier in
\cite{DGGZ22} along with supporting numerical results.
To state the conjecture of \cite{MPS23},
for $s \in \R$ we define\footnote{The function $I_1(s)$ can be rewritten in
terms of the profile for
the susceptibility on the complete graph appearing in \cite[Theorem~1.1]{Slad20}
via
\[
 I_1(s)
= e^{s^2/4 } \int_0^{\infty} e^{-(y + s / 2)^2} dy
= \frac{\sqrt{\pi}}{2} e^{s^2/4} \operatorname{erfc} (s / 2) .
\]}
\begin{equation}
    I_1(s)=  \int_0^\infty t\, e^{-\frac 14 t^4 - \frac 12 st^2} dt.
\end{equation}

\begin{conj}
\label{conj:I1}
For $d>4$ and all $\beta\in (0,1]$, there are constants
$\lambda_1,\lambda_2$ depending on $d$ and $\beta$ (but not on $s,V$) such that,
for all $s \in \R$,
 the universal profile for the
susceptibility in the scaling window
is given by
\begin{equation}
    \lim_{V \to \infty} V^{-1/2}\chi^{\T}(z_c(1+\lambda_1 s V^{-1/2}))= \lambda_2 I_1(s).
\end{equation}
Also, for $z=z_c(1+\lambda_1 sV^{-1/2})$, the rescaled length $V^{-1/2}L$ converges in distribution to a random variable $X_s$ whose distribution is
that of $-s+Z_s$ where $Z_s$ is a standard normal random variable
conditioned to exceed $s$.  Explicitly, the random variable $X_s$ has moment generating function
$\E e^{tX_s} =I_1(s+t)/I_1(s)$).
\end{conj}

The scaling window discussed above
parallels the established theory for bond percolation
on a high-dimensional torus ($d>6$)
\cite{BCHSS05a,HH17book} for which the critical susceptibility and scaling window width
instead respectively involve
powers $V^{1/3}$ and $V^{-1/3}$.
In particular, the $\Z^d$ critical point lies in
the scaling window for a high-dimensional torus; this was proved in \cite{HHII11} and
very recently an alternate proof based on the plateau for the torus two-point function
was given in \cite{HMS22}.

\subsubsection{Conjectured bound on $c_n^\T$}

Theorem~\ref{thm:dilute} does not address the question of how $c_n^\T$ behaves for $n$ of order
larger than $V^{1/2}$.
It is natural to conjecture that, as in \eqref{e:cnKn},
the following bound holds  for all $n \geq 0$,  both for weakly and strictly self-avoiding walk.

\begin{conj}
\label{conj:cn}
Let $d>4$ and let $\beta\in (0,1]$.
There exist $K,\alpha>0$ (depending on $\beta$) such that, for all $n\ge 0$,
\begin{equation}
\label{e:conjcn}
    c_{n}^{\T} \le K \mu^n e^{- \alpha n^2/V}.
\end{equation}
\end{conj}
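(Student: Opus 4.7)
The plan is to split according to $n \le C_0 V^{1/2}$ (dilute phase) versus $n > C_0 V^{1/2}$ (dense phase), with $C_0$ chosen large depending on $\beta$. In the dilute phase Theorem~\ref{thm:dilute} directly yields $c_n^\T \le K_0 \mu^n$ with $K_0$ depending on $C_0$; since $e^{-\alpha n^2/V} \ge e^{-\alpha C_0^2}$ is bounded away from zero in this regime, the conjectured bound holds after enlarging $K$. No new input is needed here.

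The substance is the dense phase $n > C_0 V^{1/2}$. Since $\chi^\T$ is entire, Markov's inequality gives, for every $t \ge 0$,
\[
    c_n^\T \le \mu^n e^{-tn}\, \chi^\T(z_c e^t).
\]
I would aim to establish a Gaussian-type upper bound
\[
    \chi^\T(z_c e^t) \le K' V^{1/2} e^{\gamma V t^2} \qquad (t \ge 0),
\]
and then optimise over $t$ at $t^\star = n/(2\gamma V)$ to obtain $c_n^\T \le K' V^{1/2} \mu^n e^{-n^2/(4\gamma V)}$. The spurious prefactor $V^{1/2}$ can be absorbed into the Gaussian factor provided $n^2/V \ge C \log V$; the narrow intermediate window $C_0 V^{1/2} \le n \le C_1 V^{1/2} (\log V)^{1/2}$ must be handled separately, for instance by a quantitative strengthening of Theorem~\ref{thm:dilute} with explicit $V$-dependence, or by pushing the analysis of Theorem~\ref{thm:chi_torus_good} slightly past $|z|=\zeta$. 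The strategy mirrors the complete-graph computation, where Stirling gives $\chi^\mathbb{K}(v^{-1}(1+t)) \asymp v^{1/2} e^{vt^2/2}$ and the corresponding Chernoff estimate recovers the Stirling bound $c_n^\mathbb{K} \le K v^n e^{-n^2/(2v)}$.

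The decisive hurdle is the Gaussian upper bound on $\chi^\T(z_c e^t)$ for $t \ge 0$. Theorem~\ref{thm:susceptibility} controls $\chi^\T$ only inside $|z| < \zeta = z_c(1 - V^{-1/2})$, strictly below criticality, whereas the Chernoff argument above needs control at and beyond $z_c$, deep into the dense phase. Extending the lace expansion across the critical window appears to require a genuinely new input: the expansion's small parameter degrades as $|z| \uparrow z_c$, since the torus two-point function transitions from $\Z^d$-like decay to the plateau regime of \cite{Slad20_wsaw}, and a replacement small parameter---presumably some combination of $V^{-1/2}$ and a measure of the distance into the dense phase---must be identified to close the bootstrap. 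A promising route, in analogy with the high-dimensional analysis of Ising or $\varphi^4$ models, would combine the intrinsic critical-point formulation of \cite{Slad20} with a renormalisation-style reduction to an effective zero-mode partition function matching the complete-graph computation. Executing this within the lace-expansion framework is where essentially all of the technical effort would be concentrated, and is precisely the open problem that the paper's abstract describes as challenging.
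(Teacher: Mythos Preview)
The statement is a \emph{conjecture}, not a theorem: the paper offers no proof and explicitly flags it as open. You recognise this yourself in the final paragraph, so your proposal is best read as a strategy outline rather than a proof. That is fair, but it is worth being precise about how your outline compares to what the paper does say about the conjecture.

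Your Chernoff approach requires the Gaussian bound $\chi^\T(z_c e^t) \le K' V^{1/2} e^{\gamma V t^2}$ for all $t\ge 0$, which is strictly stronger than the paper's Conjecture~\ref{conj:window} (a pointwise bound $\chi^\T(z_c(1+sV^{-1/2}))\le K_s V^{1/2}$ for each fixed $s$). The paper pursues a related but different reduction: it applies Hutchcroft's submultiplicativity lemma (Lemma~\ref{lem:TauberianTom}) to show that Conjecture~\ref{conj:window} would yield only the weaker bound $c_n^\T \le K_s'\mu^n e^{-\frac12 s n V^{-1/2}}$, with linear rather than quadratic exponent in $n$. So the paper's route from susceptibility control to coefficient control is more robust (it needs only pointwise bounds on $\chi^\T$, not the full Gaussian envelope), but yields a weaker conclusion than the conjecture; your route would recover the full Gaussian decay but demands a correspondingly stronger and uniform input on $\chi^\T$ throughout the dense phase.

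Neither route is currently available: as you note, and as Remark~\ref{rk:beta-s} makes explicit, the lace-expansion bootstrap used in the paper genuinely breaks down once $|z|$ exceeds $\zeta=z_c(1-V^{-1/2})$, because the torus bubble $\|G_z^\T\star G_z^\T\|_\infty$ is no longer $O(1)$ and the geometric series in $N$ fails to be small. Your suggestion of an effective zero-mode reduction is plausible in spirit but is not developed in the paper, and identifying the replacement small parameter is exactly the missing idea.
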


Conjecture~\ref{conj:cn} is consistent with Theorem~\ref{thm:dilute} in the
sense that
for $n\le V^{1/2}$ the factor $e^{- \alpha n^2/V}$ reproduces the $n^2/V$
error term in \eqref{e:cnmr}.
If Conjecture~\ref{conj:cn} is correct, then the effect of the finite torus
would be seen once $n$ reaches order $V^{1/2}$, as the
upper bound on $c_{n}^{\T}$ would then grow more slowly than
$A\mu^n$.
The $\beta$-dependence of  $\alpha$ is natural, since for $\beta=0$ we have
$c_n=c_n^\T=(2d)^n$ and there can be no diminution on the torus.

It is interesting to ask what improved bound on the susceptibility would be sufficient
in order to make progress on Conjecture~\ref{conj:cn}.
As we discuss next,
the following conjecture, which is much weaker than the very precise statement of
Conjecture~\ref{conj:I1}, implies an improvement to Theorem~\ref{thm:dilute}.

\begin{conj}
\label{conj:window}
Let $d>4$ and let $\beta>0$ be sufficiently small.  There exist $s>0$
and a constant $K_s$ such that
\begin{equation}
\label{e:windowconj}
    \chi^\T(z_c(1+sV^{-1/2})) \le K_s V^{1/2}.
\end{equation}
\end{conj}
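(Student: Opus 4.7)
The plan is to push the lace-expansion analysis that underlies Theorem~\ref{thm:chi_torus_good} across the critical value $z_c$ and into the intrinsic critical window. The torus lace expansion should produce an identity of the form
\begin{equation}
    \chi^\T(z) = \frac{1}{1 - 2dz - \hat{\Pi}^\T_z(0)},
\end{equation}
in complete analogy with the $\Z^d$ case, where $\hat{\Pi}^\T_z(0)$ is the sum at $k=0$ of the Fourier transforms of the torus lace-expansion coefficients. The $\Z^d$ critical equation reads $1 - 2dz_c - \hat{\Pi}_{z_c}(0) = 0$, so for $z=z_c(1+sV^{-1/2})$ one has the decomposition
\begin{equation}
    1 - 2dz - \hat{\Pi}^\T_z(0)
    = -2d z_c s V^{-1/2}
    + \bigl[\hat{\Pi}_{z_c}(0) - \hat{\Pi}^\T_z(0)\bigr].
\end{equation}
The first term is already of the desired order $V^{-1/2}$, so the task reduces to showing that the bracketed torus$-\Z^d$ difference is of the same order, and of the right sign or at least small enough that the denominator cannot be pushed closer to zero than order $V^{-1/2}$.

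To bound the bracketed difference I would follow the strategy of Section~\ref{sec:plateau}: expand diagrammatically in the torus two-point function $G^\T(0,x;z)$ and substitute the plateau estimate $G^\T(0,x;z) \approx \chi^\T(z)/V + G(0,x;z_c)$, which on the torus plays the role of the $\Z^d$ critical two-point function. A bootstrap in the spirit of \cite{HHS98,Slad20_wsaw} would assume a priori that $\chi^\T(z) \le M V^{1/2}$ on the range $|s|\le s_0$, and then aim to derive in that regime an estimate of the form
\begin{equation}
    \bigl|\hat{\Pi}_{z_c}(0) - \hat{\Pi}^\T_z(0)\bigr|
    \prec \beta\Bigl(\frac{\chi^\T(z)}{V} + V^{-1/2}\Bigr),
\end{equation}
which, substituted back into the formula for $\chi^\T(z)$, yields a self-consistent inequality forcing $\chi^\T(z) \le K_s V^{1/2}$ for $\beta$ sufficiently small depending on $s$. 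The analysis of Theorem~\ref{thm:susceptibility} already implements a version of this bootstrap on one side of $z_c$; the hope is that its structural form persists past $z_c$.

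The main obstacle, and presumably the reason the conjecture is flagged as a challenging open problem, is that the diagrammatic bounds above require genuinely good control of $G^\T(\,\cdot\,;z)$ at and slightly above $z_c$. The two-point function estimates available from \cite{Slad20_wsaw} stop at $z_c(1-c\beta^{1/2}V^{-1/2})$, and extending them into and through the critical window appears to require a new torus-specific ingredient—perhaps an exact identity analogous to the incomplete-Gamma representation exploited on the complete graph in \cite{DGGNZ19,Slad20}, or a refined near-critical plateau analysis tracking the $s$-dependence of $\hat{\Pi}^\T_z(0)$ to leading order. Without such an input the bootstrap cannot be closed, because the small parameter that controls the lace expansion (essentially $\beta$ times a torus bubble diagram) degenerates precisely as $z$ crosses $z_c$ on the torus, and a fresh mechanism is needed to replace the $\Z^d$ smallness.
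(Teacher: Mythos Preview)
The statement in question is Conjecture~\ref{conj:window}, and the paper does \emph{not} prove it: it is explicitly presented as an open problem. There is therefore no ``paper's own proof'' to compare against. The paper's only contribution in the direction of a proof is the observation that Conjecture~\ref{conj:window} would follow from Conjecture~\ref{conj:cn} (the bound $c_n^\T \le K\mu^n e^{-\alpha n^2/V}$), via a short computation with the generating function; this is a different reduction from the one you sketch.

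Your proposal is not a proof but a strategy outline, and you are candid about this. Your diagnosis of the obstacle is essentially correct and matches what the paper itself says in Remark~\ref{rk:beta-s}: the lace-expansion bootstrap requires $\beta\|G_z^\T\star G_z^\T\|_\infty$ to be small, and by the plateau estimate this quantity is of order $\beta(1+\chi(z)^2/V)$, which ceases to be uniformly small once $z$ reaches $z_c(1-sV^{-1/2})$ with $s$ small or negative unless $\beta$ is taken small depending on $s$. So the mechanism you identify---degeneration of the small parameter controlling the diagrammatic expansion as one enters the window---is exactly the one the authors point to as requiring ``a new idea.'' One minor inaccuracy: the plateau bounds from \cite{Slad20_wsaw} give the \emph{upper} bound on $G_z^\T$ for all $z<z_c$, not just up to $z_c(1-c\beta^{1/2}V^{-1/2})$; it is the \emph{lower} bound that has the restricted range. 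The genuine obstruction is not missing two-point function estimates but the fact that the torus bubble diagram itself becomes order $1$ rather than $O(\beta)$ in the window, so the geometric series over $N$ in the lace expansion is no longer controlled by a small parameter.
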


Conjecture~\ref{conj:window} is implied by Conjecture~\ref{conj:cn}, since
the latter implies that for any $s>0$ we have
\begin{align}
    \chi^\T(z_c(1+sV^{-1/2}))
    & \le
    K
    \sum_{n=0}^\infty e^{-\alpha n^2/V} e^{sn/V^{1/2}}
    \nnb &
    \prec
    \int_0^\infty e^{-\alpha t^2/V + s t/V^{1/2}} \D t \asymp_s V^{1/2},
\end{align}
where $\asymp_s$ indicates that the implicit constants may depend on $s$.
Conversely, Conjecture~\ref{conj:window} implies a weaker form of
Conjecture~\ref{conj:cn} via the following very elementary but consequential lemma
 due to Hutchcroft \cite[Lemma~3.4]{Hutc19}.

\begin{lemma}
\label{lem:TauberianTom}
For $n \ge 0$ let $a_n \ge 0$ be submultiplicative, i.e., $a_{n+m} \le a_n a_m$ for all $m,n$.
Let $A(z) = \sum_{n=0}^\infty a_nz^n$.  Then, for every $n \ge 1$ and every $z \ge w >0$,
\begin{equation}
    a_n \le \frac{z^n}{w^{2n}} \left( \frac{A(w)}{n+1} \right)^2.
\end{equation}
\end{lemma}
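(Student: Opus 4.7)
My plan is to reduce the $z \ge w$ statement to the single case $z = w$ and then prove that case by a direct Cauchy--Schwarz argument. At $z = w$ the claimed bound simplifies to $(n+1)^2 a_n w^n \le A(w)^2$, and for general $z \ge w$ the additional factor $(z/w)^n \ge 1$ on the right-hand side makes the inequality only weaker. So everything reduces to establishing the single inequality $(n+1)^2 a_n w^n \le A(w)^2$ for arbitrary $w > 0$.

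To prove this, I will first truncate: $A(w) \ge \sum_{k=0}^n a_k w^k$ by nonnegativity of the coefficients. Then I will use the reindexing $k \mapsto n-k$ to write
\begin{equation*}
\left(\sum_{k=0}^n a_k w^k\right)^{\!2} = \left(\sum_{k=0}^n a_k w^k\right)\!\left(\sum_{k=0}^n a_{n-k} w^{n-k}\right).
\end{equation*}
The Cauchy--Schwarz inequality in the form $\bigl(\sum x_k^2\bigr)\bigl(\sum y_k^2\bigr) \ge \bigl(\sum x_k y_k\bigr)^2$, applied with $x_k = (a_k w^k)^{1/2}$ and $y_k = (a_{n-k} w^{n-k})^{1/2}$, bounds this product from below by $\bigl(\sum_{k=0}^n \sqrt{a_k a_{n-k}}\,w^{n/2}\bigr)^{\!2} = w^n \bigl(\sum_{k=0}^n \sqrt{a_k a_{n-k}}\,\bigr)^{\!2}$. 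Submultiplicativity now gives $a_k a_{n-k} \ge a_n$ for every $0 \le k \le n$, so the inner sum is at least $(n+1)\,a_n^{1/2}$, and squaring yields the required lower bound $(n+1)^2 a_n w^n \le A(w)^2$.

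The only real subtlety is the choice of pairing in Cauchy--Schwarz: pairing the $k$-th term with the $(n-k)$-th term is what makes the $w$-exponent constantly $n/2$ inside the square, and it is precisely this constancy that converts the $n+1$ instances of $a_k a_{n-k} \ge a_n$ into a factor of $(n+1)^2$. The more naive strategy of reading off the coefficient of $w^n$ in the Cauchy product expansion of $A(w)^2$ produces only the estimate $(n+1) a_n w^n \le A(w)^2$, which is short by exactly one factor of $n+1$ and would not suffice for the lemma.
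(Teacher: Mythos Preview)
Your proof is correct. The paper does not actually prove this lemma; it simply attributes it to Hutchcroft \cite[Lemma~3.4]{Hutc19} and moves on. Your Cauchy--Schwarz pairing of the $k$-th and $(n-k)$-th terms is exactly the right idea and recovers Hutchcroft's inequality with the full $(n+1)^2$. An equivalent way to phrase the same step, and perhaps slightly shorter, is via AM--GM: from $a_n \le a_k a_{n-k}$ one gets $\sqrt{a_n w^n} \le \sqrt{a_k w^k}\sqrt{a_{n-k}w^{n-k}} \le \tfrac12(a_k w^k + a_{n-k}w^{n-k})$, and summing over $k=0,\dots,n$ gives $(n+1)\sqrt{a_n w^n} \le A(w)$ directly, without needing to write the squared partial sum as a product of two reindexed copies. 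But this is only a cosmetic difference---both arguments hinge on the same pairing, and your observation that reading off the $w^n$ coefficient of the Cauchy product loses a factor of $n+1$ is exactly the point.
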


When applied to the submultiplicative sequence $a_n=c_{n}^{\T}$
with $A = \chi^\T$, $z=z_c(1+sV^{-1/2})$ with $s>0$, and $w= \frac{n}{n+1}z$,
and assuming Conjecture~\ref{conj:window},
Lemma~\ref{lem:TauberianTom} implies the existence of a positive constant $K_s'$ (depending on $\beta$) such that
\begin{align}
\label{e:TomTauberian}
    c_{n}^\T &
    \le K_s' \frac{1}{z_c^n (1+sV^{-1/2})^n}
    \le K_s' \mu^n e^{-\frac 12 s nV^{-1/2}}
    ,
\end{align}
which involves a weaker exponentially decaying factor than in Conjecture~\ref{conj:cn}.
To verify the first inequality of \eqref{e:TomTauberian}, we used
$\chi^\T(w) \le \chi(w) \prec (z_c-w)^{-1}$ for $n \le (2s)^{-1}V^{1/2}$ and for larger $n$
we applied \eqref{e:windowconj}.

\subsubsection{Effect of boundary conditions}

A wider issue
concerns the effect of boundary conditions
on the critical behaviour of
statistical mechanical models in finite volume above the upper critical
dimension, and includes \cite{ZGFDG18,WY14,LM16} as a small sample of a much
larger literature.
For self-avoiding walk or the Ising model,
the emerging consensus is that with free boundary conditions
the susceptibility for a box of side $r$ is of order $r^2=V^{2/d}$ at the $\Z^d$ critical
point, whereas  with periodic boundary conditions (i.e., on the torus)
the susceptibility
behaves instead as $V^{1/2}$.  Theorem~\ref{thm:chi_torus_good}
and Conjecture~\ref{conj:window} concern the $V^{1/2}$ behaviour
for weakly self-avoiding walk in dimensions $d>4$.  A proof for free boundary conditions
remains open for self-avoiding walk models; numerical evidence is presented in
\cite{ZGFDG18}.
For the Ising model in dimensions $d>4$, with free boundary conditions
the $V^{2/d}$ behaviour
of the susceptibility has been proved
in \cite{CJN21}, and for periodic boundary conditions
 a  $V^{1/2}$ lower bound for the susceptibility is proved in \cite{Papa06}.
The Ising upper bound remains unproved on the torus.

\subsection{Enumeration of self-avoiding walks on $\Z^d$}

We are not aware of any previous rigorous work on
self-avoiding walk on a torus, but the enumeration of (strictly)
self-avoiding walks on infinite graphs, especially $\Z^d$,
has a long history going back to the middle
of the twentieth century.  Let $s_n$ denote the number of $n$-step strictly self-avoiding
walk on $\Z^d$ starting from the origin, and let $\mu_1 = \lim_{n\to\infty}s_n^{1/n}$
be the ($d$-dependent) \emph{connective
constant}.
It is predicted that $s_n \sim A\mu_1^n n^{\gamma-1}$ for a universal critical
exponent $\gamma=\gamma(d)$,  with $\gamma(2) = \frac{43}{32}$ \cite{Nien82,LSW04}
and
$\gamma(3) = 1.156\, 953\, 00(95)$ \cite{Clis17}.
For $d=4$, a result consistent with the predicted behaviour
$s_n \sim A \mu_1^n (\log n)^{1/4}$ have been proved
for the susceptibility of continuous-time weakly self-avoiding walk \cite{BBS-saw4-log}.
For $d \ge 5$, as indicated at \eqref{e:cneta}, it has been proved using the lace expansion that $s_n \sim A\mu_1^n$, so $\gamma=1$
\cite{HS92a}.

For dimensions $d=2,3,4$, the existing upper bounds on $s_n$ remain far from the predicted
behaviour, and have the form
\begin{equation}
\label{e:HWbd}
    s_n \le O(\mu_1^n e^{f_n}).
\end{equation}
In 1962, \eqref{e:HWbd} was proved with $f_n=Bn^{1/2}$ for
any $d\ge 2$ and $B> \pi(2/3)^{1/2}$ if $n$ is sufficiently large \cite{HW62}.
In 1964, this was improved for $d\ge 3$ to $f_n=Qn^{2/(d+2)}\log n$ for some $Q>0$
\cite{Kest64}, \cite[Section~3.3]{MS93}.
More than half a century later, in 2018, the bound was improved
to $f_n=o(n^{1/2})$ \cite{Hutc18} (based on \cite{D-CH13}),
and in the same year it was proved that \eqref{e:HWbd} holds
for $d=2$  with $f_n=n^{0.4979}$ for infinitely
many $n$ (and with $f_n=n^{0.4761}$ for all $n$ on the hexagonal lattice) \cite{DGHM18}.
The distance of these results from the predicted
behaviour $\mu_1^n n^{\gamma-1}$
is an indication of the notorious difficulty in obtaining accurate bounds
for the number of self-avoiding walks.

\subsection{Organisation}

The remainder of the paper is organised as follows.
In Section~\ref{sec:reduction} we reduce the proof of
Theorem~\ref{thm:susceptibility} (which we have seen above implies
our main result Theorem~\ref{thm:dilute}) to Propositions~\ref{prop:Flacebds-new}--\ref{prop:Flbd-new} concerning the infinite-volume
and finite-volume susceptibilities and
Proposition~\ref{prop:AD-new} for the difference between
these two susceptibilities.  It is also in Section~\ref{sec:reduction}
that we prove
Theorem~\ref{thm:chi_torus_good} for the susceptibility and
Theorem~\ref{thm:EL} for the expected length.

A key ingredient for the proofs of Propositions~\ref{prop:Flacebds-new}--\ref{prop:AD-new}
is the estimate on
the near-critical two-point function for $\Z^d$ (Theorem~\ref{thm:mr}) and the
torus plateau that is derived from this near-critical estimate (Theorem~\ref{thm:plateau}).
In Section~\ref{sec:plateauplus} we discuss the plateau theory,
as well as its implications for near-critical estimates for $\Z^d$
quantities, and how this ultimately leads to sharp control of torus convolutions.

The proofs of the Propositions~\ref{prop:Flacebds-new}--\ref{prop:AD-new} are also based on
well-developed theory for the
lace expansion, which is briefly reviewed in Section~\ref{sec:lace}.
In Section~\ref{sec:lacepfs} we obtain estimates on the infinite-volume and
finite-volume lace expansion and prove Propositions~\ref{prop:Flacebds-new}--\ref{prop:Flbd-new}.
Finally, in the most intricate part of our analysis,
in Section~\ref{sec:AD} we use novel estimates for
the lace expansion to directly compare the finite-volume
and infinite-volume susceptibilities and prove Proposition~\ref{prop:AD-new},
together with its partner Proposition~\ref{prop:AD-new-prime} which is used
only in the proof of Theorem~\ref{thm:EL}.

\section{Reduction of proof}
\label{sec:reduction}

In this section, we reduce the proofs of Theorems~\ref{thm:susceptibility} and
\ref{thm:chi_torus_good}
to three propositions, Propositions~\ref{prop:Flacebds-new}--\ref{prop:AD-new}.
The proof of Theorem~\ref{thm:EL} also relies on the extension
of Proposition~\ref{prop:AD-new} given in Proposition~\ref{prop:AD-new-prime}.
Propositions~\ref{prop:Flacebds-new}--\ref{prop:Flbd-new}
are proved in Section~\ref{sec:lacepfs}, and Propositions~\ref{prop:AD-new}
and \ref{prop:AD-new-prime} are proved in Section~\ref{sec:AD}.
Throughout this section, we regard $z$ as a complex variable $z \in \C$.

\subsection{Proof of Theorem~\ref{thm:susceptibility}}

For $z\in \C$ with $|z|< z_c$
we define
\begin{equation}
    F(z) = \frac{1}{\chi(z)}.
\end{equation}
The function $F$ is well understood for $d>4$.  In particular, $F$ and its derivative
with respect to $z$ extend by continuity
to the closed disk $|z|\le z_c$, with $F(z_c)=0$.
The constant $A$ in Theorem~\ref{thm:dilute} is given by
\begin{equation}
\label{e:Adef1}
    A=\frac{1}{-z_cF'(z_c)}.
\end{equation}
The following proposition is part of the well-established theory for $\Z^d$.
A version of
Proposition~\ref{prop:Flacebds-new} for strictly self-avoiding walk
in dimensions $d>4$
is given in \cite{HS92a} (see also \cite{MS93}).
We give an alternate and simpler proof here which uses the near-critical decay of
the two-point function proved in \cite{Slad20_wsaw} to avoid the need for fractional
derivatives as in \cite{HS92a,MS93}.

\begin{prop}
\label{prop:Flacebds-new}
Let $d>4$ and let $\beta$ be sufficiently small.
The derivative $F'(z)$ obeys $F'(z)=-2d+O(\beta)$ for all $|z|\le z_c$, and $-z_cF'(z_c)=1+O(\beta)$.  Also, $F$ obeys the lower bound
\begin{align}
\label{e:Fder}
    |F(z)| & \succ   |1-z/z_c|  \qquad (|z| \le z_c)
    .
\end{align}
\end{prop}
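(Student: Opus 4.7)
The plan is to derive the proposition from the lace expansion identity
\begin{equation}
F(z) = 1 - 2dz - \hat{\Pi}_z(0), \qquad \hat{\Pi}_z(0) = \sum_{x\in \Z^d}\Pi_z(x),
\end{equation}
which will be recalled in Section~\ref{sec:lace}. Differentiating in $z$ gives $F'(z) = -2d - \partial_z \hat{\Pi}_z(0)$, so the first claim reduces to the uniform bound $\partial_z \hat{\Pi}_z(0) = O(\beta)$ on the closed disk $|z|\le z_c$. Combined with $z_c = (2d)^{-1}(1 + O(\beta))$, which holds for $\beta$ small because $\mu = 2d + O(\beta)$, this immediately yields $-z_c F'(z_c) = 1 + O(\beta)$.

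To establish the uniform $O(\beta)$ estimate, I would invoke the standard diagrammatic bounds for the lace expansion. Each term in $\Pi_z$ carries at least one factor $-\beta U_{st}$, so a prefactor $\beta$ is automatic, and each diagram is bounded by a convolution of two-point functions $G_z$. By nonnegativity of $c_n(x)$ for $\beta \in [0,1]$, one has $|G_z(x)| \le G_{|z|}(x) \le G_{z_c}(x)$ throughout the closed disk, so every bound reduces to one at $z = z_c$. The key input is then the near-critical decay $G_{z_c}(x) \prec \langle x\rangle^{-(d-2)}$ from \cite{Slad20_wsaw}, which for $d>4$ is exactly strong enough to make the relevant bubble-type sums converge and to control $\hat\Pi_z(0)$ and its $z$-derivative directly, bypassing the fractional-derivative machinery of \cite{HS92a,MS93}.

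For the lower bound $|F(z)| \succ |1-z/z_c|$ I would integrate $F'$ along the line segment from $z_c$ to $z$, which lies entirely in $\{|w|\le z_c\}$ by convexity. Parametrising by $w(t) = (1-t)z_c + tz$ and using $F(z_c) = 0$,
\begin{equation}
F(z) = (z - z_c)\int_0^1 F'(w(t))\, dt = (z-z_c)\bigl(-2d + O(\beta)\bigr),
\end{equation}
so $|F(z)| \ge z_c|1-z/z_c|(2d - O(\beta)) \succ |1-z/z_c|$ once $\beta$ is small enough to keep the error strictly below $2d$.

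The main obstacle is the uniform-in-$z$ control of $\partial_z\hat{\Pi}_z(0)$ all the way up to the critical point. Differentiating in $z$ inserts a length weight on one walk in each diagram, and a naive bound would involve quantities like $\chi'(z)$ that diverge at $z_c$. The saving point is that in every admissible diagram the length-weighted factor is always flanked by at least one additional $G_z$ on either side, producing weighted-bubble-type sums that are finite at $z_c$ thanks to the pointwise decay $G_{z_c}(x) \prec \langle x\rangle^{-(d-2)}$ from \cite{Slad20_wsaw}. Verifying that this decay exponent suffices for every diagram, which is precisely where the dimensional condition $d>4$ enters, is the technical heart of the argument.
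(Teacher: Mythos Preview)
Your proposal is correct and follows the same strategy as the paper: use the lace-expansion identity $F(z)=1-2dz-\hat\Pi_z$, bound $\hat\Pi_z$ and $\partial_z\hat\Pi_z$ by $O(\beta)$ uniformly on the closed disk via diagrammatic estimates and the critical decay of $G_{z_c}$, and then integrate $F'$ along the segment from $z_c$ to $z$ (using $F(z_c)=0$) to get the lower bound.

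Two small sharpenings relative to the paper's execution. First, your heuristic that the length-weighted line is ``flanked'' by $G_z$ factors is not quite the mechanism that works: in the diagrammatic bound one extracts $\|\partial_z G_z\|_\infty$ as a standalone factor, and what makes this finite at $z_c$ is the submultiplicativity inequality $z\,\partial_z G_z(x)\le (G_z*G_z)(x)$, which turns it into the bubble (finite for $d>4$); the flanking picture alone does not give a bounded quantity since convolving $\partial_z G_z$ against $G_z$ still involves $\sum_y \partial_z G_z(y)=\chi'(z)$, which diverges. Second, rather than importing $\mu=2d+O(\beta)$ from elsewhere, the paper obtains $2dz_c=1+O(\beta)$ directly from $F(z_c)=0$ and the bound $|\hat\Pi_{z_c}|=O(\beta)$, which is cleaner and self-contained.
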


The next proposition is a torus version of Proposition~\ref{prop:Flacebds-new}.
Its proof relies on the plateau for the torus two-point function proved
in \cite{Slad20_wsaw} and discussed in
Section~\ref{sec:plateau}.
For $z\in \C$ we define the meromorphic function
\begin{equation}
    \varphi(z) = \frac{1}{\chi^\T(z)}
    .
\end{equation}
The function $\varphi$ obeys a version of Proposition~\ref{prop:Flacebds-new} in
a smaller disk.

\begin{prop}
\label{prop:Flbd-new}
Let $d>4$ and let $\beta$ be sufficiently small.  Let $\zeta=z_c(1-V^{-1/2})$ and
$U=\{z\in \C : |z|<\zeta\}$.
Then $\varphi'(z) = -2d +O(\beta)$ for all $|z|\le\zeta$,
and $\varphi$ obeys the lower bound
\begin{equation}
\label{e:varphilb}
    |\varphi(z)| \succ
    |1-z/\zeta|
    \qquad (|z| \le \zeta)
    .
\end{equation}
\end{prop}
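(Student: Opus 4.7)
The strategy follows the outline of the proof of Proposition~\ref{prop:Flacebds-new}, using the torus lace expansion identity
\[
\varphi(z) = 1 - 2dz - \hat\Pi^\T_z(0)
\]
to be set up in Sections~\ref{sec:lace}--\ref{sec:lacepfs}, in place of the $\Z^d$ identity $F(z) = 1 - 2dz - \hat\Pi_z(0)$. On the disk $|z|\le\zeta$ this identity also implies that $\chi^\T$ does not vanish, so $\varphi$ is holomorphic there. Differentiating yields $\varphi'(z) = -2d - \partial_z \hat\Pi^\T_z(0)$, so the derivative estimate reduces to $\partial_z \hat\Pi^\T_z(0) = O(\beta)$ uniformly on $|z|\le\zeta$. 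I would obtain this from the standard lace-expansion diagrammatic bounds adapted to the torus (to be developed in Section~\ref{sec:lacepfs}), which express each lace diagram as a convolution of the torus two-point function $G^\T_z$. The plateau estimates of Section~\ref{sec:plateau} give uniform control of these convolutions in the near-critical regime $|z|\le\zeta$, and the small-$\beta$ factor from $U_{st}$ on each diagram then yields the $O(\beta)$ conclusion, paralleling the $\Z^d$ argument that gives $F'(z)=-2d+O(\beta)$.

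For the lower bound $|\varphi(z)| \succ |1-z/\zeta|$, the key input is the plateau value at the boundary point $\zeta$. By \eqref{e:chiupperlower}, applicable at $z=\zeta$ for $\beta$ small (since $\zeta$ then lies in the stated interval of validity), combined with the trivial upper bound $\chi^\T(\zeta) \le \chi(\zeta) \asymp V^{1/2}$, one has $\chi^\T(\zeta) \asymp V^{1/2}$, and hence $\varphi(\zeta) \asymp V^{-1/2}$ is real positive. Integrating the derivative bound along the straight segment from $\zeta$ to an arbitrary $z$ in the disk gives
\[
\varphi(z) = \varphi(\zeta) - 2d(z-\zeta)\bigl(1 + J(z)\bigr), \qquad |J(z)| = O(\beta),
\]
which I factor as $\varphi(z) = -2d(1+J(z))(z - \tilde z)$ with $\tilde z = \zeta + \varphi(\zeta)/[2d(1+J(z))]$. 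Since $\varphi(\zeta)>0$ and $\text{Re}[1/(1+J(z))] \ge 1-O(\beta)>0$, we obtain $\text{Re}(\tilde z - \zeta) \succ V^{-1/2}$ and $|\text{Im}(\tilde z - \zeta)| = O(\beta V^{-1/2})$, so $\tilde z$ sits just outside the disk $|z|\le\zeta$.

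The lower bound then follows from a direct geometric computation. For $z$ with $|z|\le\zeta$ we have $\text{Re}(z-\zeta)\le 0$. Setting $v=z-\zeta$ and $u=\tilde z-\zeta$, the identity
\[
|z-\tilde z|^2 = |v|^2 - 2\,\text{Re}(\bar v u) + |u|^2,
\]
combined with $\text{Re}(v)\text{Re}(u)\le 0$ and $|\text{Im}(u)| = O(\beta)|u|$, yields $-2\,\text{Re}(\bar v u) \ge -O(\beta)|v||u|$. Using $|v||u|\le\tfrac12(|v|^2+|u|^2)$ then gives $|z-\tilde z|^2 \ge \tfrac12|v|^2+\tfrac12|u|^2$ for $\beta$ sufficiently small, and hence $|z-\tilde z|\succ |v|$. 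Therefore
\[
|\varphi(z)| = 2d\,|1+J(z)|\,|z-\tilde z| \succ |z-\zeta| \asymp |1-z/\zeta|,
\]
as required.

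The main obstacle is the derivative estimate of the first step, which requires a uniform near-critical analysis of the torus lace expansion using the plateau decomposition of $G^\T_z$ from \cite{Slad20_wsaw}; this is the novel technical content of the paper and genuinely departs from the well-understood $\Z^d$ case. Once the uniform derivative bound is in hand, the lower bound reduces to the essentially elementary geometric argument above combined with the plateau value $\varphi(\zeta)\asymp V^{-1/2}$.
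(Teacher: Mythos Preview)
Your proposal is correct and follows essentially the same approach as the paper: the torus lace expansion identity $\varphi(z)=1-2dz-\hat\Pi^\T_z$, the derivative bound $\partial_z\hat\Pi^\T_z=O(\beta)$ via diagrammatic estimates controlled by the plateau upper bound on $\|G^\T_z\star G^\T_z\|_\infty$ (this is where $|z|\le\zeta$ is used, to ensure $\chi(|z|)^2/V\prec 1$), and then integration from $\zeta$ to $z$ for the lower bound.

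The only difference is in the final geometric step. Your $\tilde z$ construction and the computation with $u,v$ are correct, but the paper's argument is shorter and avoids the plateau \emph{lower} bound on $\chi^\T(\zeta)$ that you invoke: one only needs $\varphi(\zeta)>0$. Writing $\varphi(z)=\varphi(\zeta)+2d(\zeta-z)+O(\beta|\zeta-z|)$, the paper observes directly that $|\varphi(\zeta)+2d(\zeta-z)|\ge 2d|\zeta-z|$ because $\varphi(\zeta)$ is real positive and $\mathrm{Re}(\zeta-z)\ge 0$ on the disk (adding a positive real number to a complex number with nonnegative real part can only increase the modulus). Subtracting the $O(\beta|\zeta-z|)$ error then gives the bound. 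Your argument recovers the same inequality via the estimate $|\mathrm{Im}(u)|=O(\beta)|u|$, which indeed follows just from $\varphi(\zeta)\in\R$; the two-sided bound $\varphi(\zeta)\asymp V^{-1/2}$ you state is true but not actually used in your geometric computation.
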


For $z\in \C$ with $|z|\le z_c$,
 we define
\begin{equation}
\label{e:Deltadef-new}
    \Delta(z)  = \varphi(z) -F(z)
    .
\end{equation}
The following proposition, which is central to the proof of our main result,
provides a quantitative comparison between
the susceptibilities $\chi(z)$ and $\chi^\T(z)$ on $\Z^d$ and on the torus,
as well as on their derivatives.  No such comparison has been performed previously
for any model that has been analysed using the lace expansion,
and the proof of the proposition involves new  methods.
The proof of Proposition~\ref{prop:AD-new}, which also relies heavily
on the torus plateau, is given in Section~\ref{sec:AD}.

\begin{prop}
\label{prop:AD-new}
Let $d>4$ and let $\beta$ be sufficiently small.
For $z\in\C$ with $|z|\le \zeta$,
\begin{align}
    |\Delta(z)|
    & \prec \beta  \frac{r^2+\chi(|z|)}{V}
    .
\end{align}
\end{prop}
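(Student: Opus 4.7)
The plan starts from the lace-expansion identities $F(z) = 1 - 2dz - \hat\Pi(z)$ and $\varphi(z) = 1 - 2dz - \hat\Pi^\T(z)$, to be derived in Section~\ref{sec:lace}. These immediately give
\begin{equation*}
    \Delta(z) = \hat\Pi(z) - \hat\Pi^\T(z),
\end{equation*}
reducing the proposition to a bound on the difference between the infinite-volume and torus lace-expansion coefficients, each summed over all of space. Using the lift bijection from torus walks to $\Z^d$ walks, I would rewrite $\hat\Pi^\T$ as a sum over $\Z^d$ walks $\bar\omega$ with torus interactions $U_{st}^\T(\bar\omega) = -\indicthat{\bar\omega(s)-\bar\omega(t) \in r\Z^d}$ replacing the $\Z^d$ interactions $U_{st}(\bar\omega) = -\indicthat{\bar\omega(s) = \bar\omega(t)}$. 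The pointwise difference $U_{st}^\T - U_{st}$ is supported precisely on pairs at which $\bar\omega(s)$ and $\bar\omega(t)$ differ by a nonzero vector in $r\Z^d$, forcing a displacement of order at least $r$ in some coordinate.

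Both coefficients admit $N$-loop decompositions $\hat\Pi = \sum_{N\ge 1}(-1)^N \hat\Pi^{(N)}$ and $\hat\Pi^\T = \sum_{N\ge 1}(-1)^N \hat\Pi^{\T,(N)}$, where each term is bounded by a Feynman diagram built from convolutions of the two-point function $G$ on $\Z^d$ or $G^\T$ on the torus, carrying a $\beta^N$ weight from the lace combinatorics. The two analytic inputs I would use are the near-critical decay $G(z, x) \prec |x|^{-(d-2)}$ and the torus plateau decomposition $G^\T(z, x) = G(z, x) + P(z, x)$ from \cite{Slad20_wsaw} (restated in Section~\ref{sec:plateau}), both valid uniformly in $|z|\le \zeta$. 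Expanding each torus propagator in $\hat\Pi^{\T,(N)}$ as $G + P$, the all-$G$ contribution reproduces $\hat\Pi^{(N)}$ up to the interaction mismatch above, while the remaining terms contain at least one plateau insertion $P$. The plateau insertions and the interaction-mismatch residual are the two sources of the eventual bound.

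The $\chi(|z|)/V$ part of the bound reflects the bulk size of the plateau, which for weakly self-avoiding walk has the mean-field form $\chi/V$: a single $P$-insertion in an otherwise bounded Feynman diagram gives exactly this order. The $r^2/V$ part comes from truncating two-point function sums to the torus ball: while $\sum_{x\in\Z^d} G(z, x) = \chi(z)$, the same sum restricted to $|x|\le r/2$ is also bounded by $\asymp r^2$ via the decay $G(z, x) \prec |x|^{-(d-2)}$, and this truncation (combined with the $U^\T - U$ mismatch) enters the comparison of torus and $\Z^d$ diagrams. Both corrections carry a $\beta$ weight from the lace combinatorics, and the sum over $N\ge 1$ converges geometrically for small $\beta$ by the bubble bound valid in $d>4$. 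The hard part, I expect, will be the simultaneous bookkeeping of the plateau correction $P$, the interaction mismatch $U^\T - U$, and the convolution absolute-value bounds uniformly across $|z|\le\zeta$ and all $N$; since neither $\hat\Pi$ nor $\hat\Pi^\T$ is sign-definite, monotonicity is unavailable, and this unified tracking is the ``novel lace-expansion analysis'' advertised in the abstract and constitutes the technical heart of Section~\ref{sec:AD}.
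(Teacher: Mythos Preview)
Your starting point and intuitions are correct: $\Delta = \hat\Pi - \hat\Pi^\T$, the lift bijection lets you write $\hat\Pi^\T$ over $\Z^d$ walks with the torus interaction $U^\T_{st}$, and the mismatch $U^\T_{st}-U_{st}$ is supported on pairs whose displacement lies in $r\Z^d\setminus\{0\}$, forcing a long subwalk. You also correctly anticipate the geometric sum over $N$ and the origin of the $r^2/V$ and $\chi(|z|)/V$ scales.

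The gap is in the mechanism you propose for extracting the smallness. You write ``expanding each torus propagator in $\hat\Pi^{\T,(N)}$ as $G+P$, the all-$G$ contribution reproduces $\hat\Pi^{(N)}$.'' But $\hat\Pi^{\T,(N)}$ is \emph{not} a product or convolution of propagators $G^\T$; it is a sum over walks with lace and compatible-edge constraints. The Feynman diagrams built from $G^\T$ are only \emph{upper bounds} (via \eqref{e:PiNconv} and Lemma~\ref{lem:convol_bound}), obtained by discarding the mutual avoidance between subwalks. Since neither $\hat\Pi^{(N)}$ nor $\hat\Pi^{\T,(N)}$ has a sign, bounding their difference requires an identity, not a difference of upper bounds. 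There is no place to ``insert $P$'' because the propagators are not literally present in the exact formula.

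What the paper does instead is purely combinatorial and happens \emph{before} any diagrammatic estimate. Using the factorisation $(1+\beta U^\T_{st})=(1+\beta U_{st})(1+\beta U^+_{st})$ with $U^+_{st}=-\1\{\omega(s)-\omega(t)\in r\Z^d\setminus\{0\}\}$, the difference $\Delta^{(N)}$ is split exactly as $T^{(N)}-S^{(N)}$, where $T^{(N)}$ carries the mismatch in the lace product $\prod_{st\in L}(-U_{st})$ and $S^{(N)}$ carries the mismatch in the compatible-edge product $\prod_{s't'\in\Ccal(L)}(1+\beta U_{s't'})$. In each piece one then uses $1-\prod(1-u_a)\le\sum u_a$ to extract a single explicit factor $(-U^+_{ab})$, which pins one edge $ab$ to a displacement in $r\Z^d\setminus\{0\}$. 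Only after this combinatorial extraction are diagrammatic bounds applied; the distinguished edge becomes a ``long line'' in the diagram and produces the factor $r^{-(d-2)}+\chi/V$. The $S^{(N)}$ term is the delicate one: the edge $ab$ is merely compatible with $L$, so its endpoints can land on the same subwalk, on different subwalks of the same loop, or on adjacent loops, and each configuration requires a separate Cauchy--Schwarz argument (Cases~1--3 in Section~\ref{sec:AD}). This case analysis, not a plateau expansion of propagators, is the technical heart of the proof.
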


Given these three propositions, the proof of Theorem~\ref{thm:susceptibility} is
immediate, as follows.

\begin{proof}[Proof of Theorem~\ref{thm:susceptibility}]
Let $z \in U$.  We wish to prove that
\begin{equation}
    |H(z)| \prec
    \frac{\beta}{V}\frac{1}{|1-z/\zeta|^2(1-|z|/\zeta)}
    +
    \frac{\beta r^2}{V}\frac{1}{|1-z/\zeta|^2}.
\end{equation}
By definition,
\begin{equation}
    H(z) = \frac{1}{\varphi(z)} - \frac{1}{F(z)} = -\frac{\Delta(z)}{F(z)\varphi(z)}.
\end{equation}
By Proposition~\ref{prop:Flacebds-new}, $\chi(|z|) \prec (1-|z|/z_c)^{-1}
\le (1-|z|/\zeta)^{-1}$, and by
Propositions~\ref{prop:Flbd-new} and \ref{prop:AD-new},
\begin{equation}
    |\Delta(z)| \prec \frac{\beta}{V}\frac{1}{1-|z|/\zeta}
    +\frac{\beta r^2}{V},
    \qquad
    |\varphi(z)| \succ |1-z/\zeta|,
\end{equation}
so it remains only to prove that
$|F(z)| \succ |1-z/\zeta|$.  But $|F(z)| \succ |1-z/z_c|$ by
Proposition~\ref{prop:Flacebds-new}.
Thus, since $\zeta/z_c \ge \frac 12$ for $V \ge 4$, it suffices to observe that
\begin{equation}
\label{e:disk}
    |z_c-z| \ge |\zeta-z| \qquad (z\in U)
\end{equation}
To prove \eqref{e:disk}, we write $z=x+iy$.  Both $z_c-z$ and $\zeta-z$ have imaginary part $-iy$.
The real parts obey $z_c -x \ge \zeta-x$, and this gives the claimed inequality
and completes the proof.
\end{proof}

\subsection{Susceptibility and expected length}
\label{sec:ELpf}

Theorem~\ref{thm:chi_torus_good} for the susceptibility
is an immediate consequence of Proposition~\ref{prop:AD-new}, as follows.

\begin{proof}[Proof of Theorem~\ref{thm:chi_torus_good}]
Let $|z| \le \zeta$.
It follows from the definition of $\Delta$ in \eqref{e:Deltadef-new} and from Proposition~\ref{prop:AD-new}
that
\begin{align}
    \chi(z) & = \chi^\T(z) \left(1 + \frac{\Delta(z)}{F(z)} \right) = \chi^\T(z) (1+\chi(z)\Delta(z))
    \nnb & =
    \chi^\T(z) \left( 1 + O\Big(\beta \frac{\chi(z)(r^2 +\chi(|z|))}{V} \Big) \right),
\end{align}
which proves Theorem~\ref{thm:chi_torus_good}.
\end{proof}

Next we consider the expected length and prove Theorem~\ref{thm:EL}.
We begin with an elementary lemma concerning the susceptibility on $\Z^d$.
This is a kind of Abelian theorem in which the asymptotic behaviour of the
coefficients of a generating function are used to determine the behaviour of the
generating function.

\begin{lemma}
\label{lem:ELZd}
Let $d>4$ and let $\beta$ be sufficiently small.  Then for $z \in [0,z_c)$,
\begin{align}
    \chi (z)  & =
    \frac{A}{1-z/z_c}
    + O\bigg(\frac{\beta}{(1-z/z_c)^{\frac 12 (6-d)_+}}\bigg)
    \\
    z\partial_z \chi (z)  & =
    \frac{Az/z_c}{(1-z/z_c)^2}
     + O\bigg(\frac{\beta}{(1-z/z_c)^{\frac 12 (8-d)_+}}\bigg) ,
\end{align}
where in the error terms $x_+ = \max\{0,x\}$, although when $x=0$
($d=6$ for $\chi$ and $d=8$ for the derivative)
the error term contains $|\log(1-z/z_c)|$ instead.
\end{lemma}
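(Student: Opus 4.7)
The plan is to treat the lemma as a standard Abelian-type estimate: feed the asymptotic $c_n = A\mu^n(1 + O(\beta n^{-(d-4)/2}))$ from \eqref{e:cnasy} into the power series $\chi(z) = \sum_{n\ge 0} c_n z^n$ and sum term-by-term. Writing $c_n = A\mu^n + e_n$ with $|e_n| \prec \beta\, \mu^n n^{-(d-4)/2}$ for $n \ge 1$ and $e_0 = 1 - A = O(\beta)$ (using $A = 1 + O(\beta)$ from Proposition~\ref{prop:Flacebds-new}), geometric summation of the main part gives
\[
    \chi(z) = \frac{A}{1 - z/z_c} + E(z), \qquad |E(z)| \prec \beta + \beta \sum_{n=1}^\infty n^{-(d-4)/2} (z/z_c)^n,
\]
for $z \in [0,z_c)$. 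The proof is then reduced to estimating the residual polylogarithm.

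The key input is the elementary asymptotic
\[
    \sum_{n=1}^\infty n^{-\alpha} (1-u)^n \asymp
    \begin{cases}
        1 & (\alpha > 1), \\
        |\log u| & (\alpha = 1), \\
        u^{\alpha - 1} & (\alpha < 1),
    \end{cases}
    \qquad u \to 0^+,
\]
which can be proved by splitting the sum at $n \sim 1/u$ and comparing the two pieces to the integral $\int_0^\infty t^{-\alpha} e^{-tu}\,dt$. Applied with $\alpha = (d-4)/2$ and $u = 1 - z/z_c$, this produces the exponent $(1-\alpha)_+ = \tfrac{1}{2}(6-d)_+$ in the denominator of the error bound for $\chi$, with a logarithm at the boundary $d=6$, matching the claim.

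For the derivative, I would apply the same scheme to $z \partial_z \chi(z) = \sum_{n \ge 1} n c_n z^n$. Writing $n c_n = A n \mu^n + n e_n$, the main part sums to $A (z/z_c)/(1 - z/z_c)^2$, while the residual is controlled by $\beta \sum_{n \ge 1} n^{-(d-6)/2} (z/z_c)^n$. The polylog estimate above, now with $\alpha = (d-6)/2$, yields an error of the form $O(\beta) (1-z/z_c)^{-(1-\alpha)_+} = O(\beta) (1 - z/z_c)^{-\frac{1}{2}(8-d)_+}$, with a logarithm at the boundary $d = 8$, again matching the stated bound.

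The only real obstacle is bookkeeping: correctly tracking the exponent shift $\alpha \mapsto 1-\alpha$ in the polylog asymptotic and isolating the two boundary dimensions $d=6$ and $d=8$ where the power is replaced by a logarithm. No deeper fact than the polylog estimate is needed, and the small-$\beta$ hypothesis is used only insofar as it is required to invoke \eqref{e:cnasy} and Proposition~\ref{prop:Flacebds-new}.
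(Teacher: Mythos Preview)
Your proposal is correct and follows essentially the same approach as the paper: both feed the asymptotic \eqref{e:cnasy} into the power series for $\chi$ and $z\partial_z\chi$, sum the main term geometrically, and control the residual via the elementary polylogarithm estimate $\sum_{n\ge 1} n^a t^n = O((1-t)^{-(1+a)_+})$ (with a logarithm at $a=-1$). The only cosmetic differences are that the paper writes the error exponent as $a=(6-d)/2$ rather than $-\alpha$, and presents only the derivative case explicitly while declaring the susceptibility case ``similar.''
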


\begin{proof}
Let $z \in [0,z_c)$ and recall that $\mu=z_c^{-1}$.
We only prove the second statement as the first is similar.
By \eqref{e:cnasy},
\begin{align}
    z\partial_z \chi (z)  & = \sum_{n=1}^\infty n c_n z^n =
    A\sum_{n=1}^\infty (\mu z)^n(n+O(\beta n^{1-(d-4)/2}))
    \nnb & =
    \frac{Az/z_c}{(1-z/z_c)^2} + \beta \sum_{n=1}^\infty (\mu z)^n O(n^{(6-d)/2}).
\label{e:ELpf}
\end{align}
It is an elementary fact that $\sum_{n=1}^\infty t^n n^a = O((1-t)^{-(1+a)})$ for
$t \in [0,1)$ and $a > -1$, with a logarithmic divergence for $a=-1$ and a uniform bound
for $a<-1$.  From this we see that the last term in \eqref{e:ELpf}
is bounded as claimed.
\end{proof}

For the expected length we will also apply the following proposition.
Its proof is given in Section~\ref{sec:AD}.

\begin{prop}
\label{prop:AD-new-prime}
Let $d>4$ and let $\beta$ be sufficiently small.
For $z\in\C$ with $|z|\le \zeta$,
\begin{align}
\label{e:Delp}
    |z\Delta'(z)|
     \prec \beta  \frac{\chi(|z|)(r^2+\chi(|z|))}{V}
    .
\end{align}
\end{prop}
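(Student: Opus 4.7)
The plan is to derive Proposition~\ref{prop:AD-new-prime} by applying $z\partial_z$ to the comparison argument that yields Proposition~\ref{prop:AD-new}, and tracking the extra factor of $\chi(|z|)$ contributed by each size-biased term. Since the lace expansion gives
\begin{equation}
F(z) = 1-2dz-\hat\Pi_z(0),
\qquad
\varphi(z) = 1-2dz-\hat\Pi^\T_z(0),
\end{equation}
we have $\Delta(z)=\hat\Pi_z(0)-\hat\Pi^\T_z(0)$, so $z\Delta'(z) = z\partial_z\hat\Pi_z(0) - z\partial_z\hat\Pi^\T_z(0)$. The proof of Proposition~\ref{prop:AD-new} in Section~\ref{sec:AD} will bound the undifferentiated difference by a sum of lace-expansion diagrams whose legs are either $\Z^d$ or torus two-point functions $G(x)$ or $G^\T(x)$, with the gain $(r^2+\chi(|z|))/V$ arising from comparing these legs via the torus plateau of Section~\ref{sec:plateauplus}.

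Next I would apply $z\partial_z$ to each such diagrammatic bound via the Leibniz rule, producing a sum over which single factor in the diagram is differentiated. The underlying size-biasing principle is that $\sum_x z\partial_z G(x) = z\chi'(z) \prec \chi(|z|)^2$ by Lemma~\ref{lem:ELZd}, with the analogous torus bound for $|z|\le \zeta$ following from Theorem~\ref{thm:chi_torus_good} together with $\varphi'(z)=-2d+O(\beta)$ from Proposition~\ref{prop:Flbd-new}. Pointwise diagrammatic estimates for $z\partial_z G$ therefore differ from those for $G$ by an extra convolution that contributes a factor $\chi(|z|)$. Since the number of legs in any lace-expansion diagram is uniformly bounded for $d>4$, the Leibniz sum produces only a constant number of terms, each bounded by $\chi(|z|)$ times the corresponding bound used in Proposition~\ref{prop:AD-new}; this is exactly the improvement from $\beta(r^2+\chi(|z|))/V$ to $\beta\chi(|z|)(r^2+\chi(|z|))/V$ claimed in \refeq{Delp}.

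The main obstacle will be verifying that the torus plateau comparison of Section~\ref{sec:plateauplus} survives differentiation. The plateau provides $G^\T(x) - G(x) \approx \chi(z)/V$ as a leading contribution (modulo a lower-order $r^{-(d-2)}$ piece); applying $z\partial_z$ produces $z\chi'(z)/V$ of order $\chi(|z|)^2/V$, which precisely matches the extra factor $\chi(|z|)$ needed. Concretely, this requires that the derivative of each plateau estimate in Section~\ref{sec:plateauplus} be controlled by size-biasing the two-point function rather than by differentiating the error term directly, so that the plateau structure is preserved. Once such a derivative-level plateau estimate is in place, the remainder of the argument is a line-by-line repetition of the diagrammatic bookkeeping used to prove Proposition~\ref{prop:AD-new}, with a single size-biased leg per term and no new combinatorial input.
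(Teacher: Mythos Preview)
Your overall strategy matches the paper's: apply $z\partial_z$ termwise to the decomposition $\Delta=\sum_N(-1)^{N+1}\Delta^{(N)}$, use that the derivative distributes over the $2N-1$ subwalks of each diagram, and observe that the size-biased leg contributes an extra factor $\chi(|z|)$ via the pointwise inequality $z\partial_z G_z(x)\le (G_z*G_z)(x)$ (submultiplicativity, equation~\eqref{e:nGG}).

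However, your claim that ``the number of legs in any lace-expansion diagram is uniformly bounded for $d>4$'' is false: the $N$-loop diagram has $2N-1$ legs, with $N$ unbounded. The Leibniz rule therefore produces $O(N)$ terms at level $N$, not a constant number. The paper absorbs this by proving bounds of the form $|z\Delta'^{(N)}(z)|\le N(C\beta)^N\chi(r^2+\chi)/V$ (Proposition~\ref{prop:bd_S_T-p}) and then summing $\sum_N N(C\beta)^N<\infty$. You need to track this $N$-dependence explicitly; otherwise the summation over $N$ is not justified.

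A second, more structural point: you describe the argument as ``differentiating plateau estimates,'' but the paper does not differentiate inequalities. It works directly with the walk representation, writing $n=\sum_i n_i$ over subwalk lengths and inserting the extra factor into one diagrammatic line before any bounding occurs. The plateau enters only afterward, when bounding the resulting (undifferentiated) convolutions such as $\|G_z^\T\star G_z^\T\star G_z^\T\|_\infty$ via Lemma~\ref{lem:GT3}. This distinction matters because the cases split according to whether the extra vertex lands on the ``long'' (wrapping) line or elsewhere, and in the $S^{(N)}$ analysis of Section~\ref{sec:AD} the interaction between the extra vertex and the Cauchy--Schwarz steps requires separate treatment (Cases~1(b), 2(b), 3(b)). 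Your sketch does not anticipate this case structure, which is where most of the work lies.
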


\begin{proof}[Proof of Theorem~\ref{thm:EL}]
Let $z \in [\frac 12 z_c,z_c(1-V^{-1/2})]$.
By definition,
\begin{equation}
\label{e:expect_L}
	\E_{z}^{\T}L = \frac{z\partial_z \chi^{\T}  (z)}{\chi^\T(z)}
    = \frac{- z \varphi'(z)}{\varphi(z)}.
\end{equation}
Since $\varphi=F+\Delta$ and $F=1/\chi$, this becomes
\begin{align}
	\E_{z}^{\T}L
    &=
    \frac{-z F'(z)[1+\Delta'(z)/F'(z)]}{F(z)[1+\Delta(z)/F(z)]}
    \nnb
    &=
    \frac{z\partial_z\chi(z)}{\chi(z)}
    \frac{1+\Delta'(z)/F'(z)}{1+\Delta(z)/F(z)}.
\lbeq{ELpf2}
\end{align}
For the second ratio on the right-hand side, we
recall from Proposition~\ref{prop:Flacebds-new} that $F'(z) = -2d +O(\beta)$.
Also, Propositions~\ref{prop:AD-new} and \ref{prop:AD-new-prime} give bounds
on $\Delta(z)$ and $\Delta'(z)$  (the restriction that $z \ge \frac 12 z_c$ renders
the factor $z$ on the left-hand side of \eqref{e:Delp} irrelevant).
Since $F=1/\chi$, these facts imply that
\begin{align}
	\frac{1+\Delta'(z)/F'(z)}{1+\Delta(z)/F(z)}
    &=
    1 + O\Big(\beta\frac{\chi(z)(r^2+\chi(z))}{V} \Big)
    \nnb & =
    1+O\Big( \beta\frac{1}{V (1-z/z_c)^2} \Big)
    +O\Big( \beta\frac{r^2}{V (1-z/z_c)} \Big)
    .
\end{align}
The first ratio on the right-hand side of \refeq{ELpf2} is the expected length for $\Z^d$,
which can be analysed using Lemma~\ref{lem:ELZd}.  To lighten the notation,
we temporarily write $\varepsilon = 1-z/z_c$.  Then Lemma~\ref{lem:ELZd} leads to
\begin{equation}
    \frac{z\partial_z\chi(z)}{\chi(z)}
    =
    \frac{z}{z_c} \frac{1}{\varepsilon}
    \left( 1 +
    \beta O(\varepsilon^{1-\frac12(6-d)_+} + \varepsilon^{2-\frac12(8-d)_+})
    \right).
\end{equation}
The term with power $1-\frac12(6-d)_+$ dominates the other one.
The desired result then follows by assembling the above equations.
\end{proof}

\section{The torus plateau and its consequences}
\label{sec:plateauplus}

This section contains
essential ingredients for our proofs of
Propositions~\ref{prop:Flacebds-new}--\ref{prop:AD-new}, especially Theorems~\ref{thm:mr}--\ref{thm:plateau} which provide the means to obtain the other
ingredients.
These two theorems, which give an estimate for the near-critical two-point function on $\Z^d$
and establish the existence of a ``plateau'' for the torus two-point function, are proved in
\cite{Slad20_wsaw}.
Related results for percolation are obtained in \cite{HMS22}, where the role of the
plateau in the analysis of high-dimensional torus percolation is emphasised.

\subsection{The torus plateau}
\label{sec:plateau}

The $\Z^d$ and torus \emph{two-point functions} are defined by
\begin{equation}
\label{e:Gzdef}
    G_{z}(x) = \sum_{n=0}^\infty c_{n}(x) z^n ,
    \qquad
    G_{z}^\T(x) = \sum_{n=0}^\infty c_{n}^\T(x) z^n .
\end{equation}
The susceptibilities are then by definition equal to
\begin{equation}
    \chi(z)=\sum_{x\in \Z^d} G_{z}(x)
    ,
    \qquad
    \chi^\T(z)=\sum_{x\in \Z^d} G_{z}^\T(x)
    .
\end{equation}
The following theorem concerning the near-critical decay of the
two-point function is
\cite[Theorem~1.1]{Slad20_wsaw}.
To simplify the notation, for $x\in \Z^d$ we use the notation
\begin{equation}
    \veee{x} = \max\{1,\|x\|_\infty\}
    .
\end{equation}

\begin{theorem}
\label{thm:mr}
Let $d>4$ and let $\beta$ be sufficiently small.
There are constants $c_0>0$ and $c_1\in (0,1)$
such that for all $z\in (0, z_c)$ and $x\in\Z^d$,
\begin{equation}
\label{e:Gmr}
    G_{z}(x) \le c_0 \frac{1}{\veee{x}^{d-2}}
    e^{-c_1 m(z)\|x\|_\infty}.
\end{equation}
The mass has the asymptotic form
\begin{equation}
\label{e:massasy}
    m(z) \sim c(1-z/z_c)^{1/2} \qquad (z \to z_c),
\end{equation}
with constant $c=(2d)^{1/2} + O(\beta)$.
\end{theorem}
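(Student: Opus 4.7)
The plan is to combine the lace expansion for weakly self-avoiding walk with a Fourier-space analysis on a complex strip. The lace expansion yields
\begin{equation}
G_z(x) = \delta_{0,x} + 2dz(D * G_z)(x) + (\Pi_z * G_z)(x),
\end{equation}
with $D(x) = (2d)^{-1}\1_{\|x\|_1 = 1}$ and an expansion remainder $\Pi_z$ that is small in suitable diagrammatic norms when $\beta$ is small. Fourier transforming gives $\hat G_z(k)^{-1} = 1 - 2dz\hat D(k) - \hat\Pi_z(k)$ and identifies $F(z) = 1/\chi(z)$ with $\hat G_z(0)^{-1}$.

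For the algebraic factor, I would run a bootstrap argument on the triple $(\chi(z),\ \sup_k|\hat G_z(k)|(1-\hat D(k)),\ \sup_x \veee{x}^{d-2} G_z(x))$ to obtain the uniform bound $G_z(x) \prec \veee{x}^{-(d-2)}$ for all $z \in [0, z_c]$. This closes because $d > 4$ makes the relevant bubble- and triangle-type diagrams summable, and small $\beta$ keeps $\hat\Pi_z$ small enough in those norms to recover the bootstrap inputs from its outputs.

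For the exponential factor, define $m(z)$ to be the largest $t \ge 0$ such that $\hat G_z^{-1}(ise_1) > 0$ for all $s \in [0,t)$ (equivalent directions by lattice symmetry). I would re-run the lace expansion on shifted contours to show that $\hat G_z$ extends analytically to the strip $|\Im k_j| < m(z)$, and then in Fourier inversion deform the $k_{j^*}$-contour by $i c_1 m(z)\,\mathrm{sign}(x_{j^*})$, where $j^*$ attains $\|x\|_\infty$ and $c_1 \in (0,1)$ is a safety factor keeping the contour inside the regularity strip. This extracts $e^{-c_1 m(z)\|x\|_\infty}$, while the shifted integral is controlled by the same algebraic estimate as in the first step to produce the $\veee{x}^{-(d-2)}$ prefactor. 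For the mass asymptotic I would expand
\begin{equation}
\hat G_z^{-1}(ime_1) = F(z) - m^2\Bigl[z + \tfrac{1}{2}\sum_x x_1^2 \Pi_z(x)\Bigr] + O(m^4) = 0,
\end{equation}
and use $F(z) \sim -F'(z_c)(z_c - z)$ with $-z_c F'(z_c) = 1 + O(\beta)$ from \refeq{Adef1} together with the small-$\beta$ bound $\sum_x x_1^2 |\Pi_{z_c}(x)| = O(\beta)$ to obtain $m(z)^2 = 2d(1 - z/z_c)(1 + O(\beta))$, whence the positive square root gives $c = (2d)^{1/2} + O(\beta)$.

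The main obstacle is propagating the lace expansion uniformly into a complex strip whose half-width $m(z)$ vanishes as $z \uparrow z_c$. The diagrammatic bounds on $\hat\Pi_z$ must persist on shifted contours, which in turn requires $G_z$ itself to decay exponentially in $x$-space with rate at least $m(z)$; this creates a self-referential dependence that the bootstrap must close simultaneously with the infrared bound and the algebraic decay. Establishing this near-critical exponential decay uniformly across $[0, z_c]$ is the essential technical step of \cite{Slad20_wsaw}.
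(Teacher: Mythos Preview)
This theorem is not proved in the present paper: it is quoted from \cite{Slad20_wsaw} as a prerequisite input (see the sentence immediately preceding Theorem~\ref{thm:mr}), so there is no proof here to compare against. Your sketch is a plausible outline of the strategy that reference uses---lace expansion, bootstrap for the $\veee{x}^{-(d-2)}$ bound, and analytic continuation in a complex strip to extract exponential decay and identify the mass---but the present paper treats \eqref{e:Gmr} and \eqref{e:massasy} as black-box imports and builds everything in Sections~\ref{sec:plateauplus}--\ref{sec:AD} on top of them.
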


A matching lower bound $\xvee^{-(d-2)}$ is also proved for $z=z_c$; in fact
an asymptotic formula proportional to $\|x\|_{2}^{-(d-2)}$ has been proved \cite{Slad20_lace}
(see also \cite{BHH19,HHS03,Hara08} for closely related results).
In \cite{Slad20_wsaw},
Theorem~\ref{thm:mr} is applied to prove that the torus
two-point function has a ``plateau'' in the sense of the following theorem,
which is \cite[Theorem~1.2]{Slad20_wsaw}.
For notational
convenience, in \eqref{e:plateau} (and also elsewhere) we evaluate a $\Z^d$ two-point function at a point $x\in\T_r^d$
with the understanding that in this case we identify
$x$ with a point in $[-r/2,r/2)^d \cap \Z^d$.

\begin{theorem}
\label{thm:plateau}
Let $d>4$ and let $\beta$ be sufficiently small.
There are constants $c_i>0$ such that for all $x \in \T_r^d$,
\begin{equation}
\label{e:plateau}
    G_{z}(x) +c_1 \frac{\chi(z)}{V}
    \le
    G^{\T}_{z}(x)
    \le
    G_{z}(x) +c_2 \frac{\chi(z)}{V},
\end{equation}
where the upper bound holds for all $r \ge 3$ and all $z \in (0,z_c)$, whereas
the lower bound holds provided that
$z \in [ z_c-c_3r^{-2},   z_c-c_4\beta^{1/2}r^{-d/2}]$.
\end{theorem}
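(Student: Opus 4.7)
For the upper bound of \eqref{e:plateau}, valid for all $r\ge 3$ and $z\in(0,z_c)$, I would exploit the lift bijection \eqref{e:liftdef}. A torus walk $\omega$ ending at $x\in\T_r^d$ lifts uniquely to a $\Z^d$ walk $\bar\omega$ ending at some $y\equiv x\bmod r$, and $\omega(s)=\omega(t)$ on the torus is equivalent to $\bar\omega(s)\equiv\bar\omega(t)\bmod r$. Thus every $\Z^d$ self-intersection of $\bar\omega$ is automatically a torus self-intersection of $\omega$, so the torus weight $(1-\beta)^{\#\{\text{torus int}\}}$ is pointwise dominated by the $\Z^d$ weight $(1-\beta)^{\#\{\text{lift int}\}}$. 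Summing over lifts gives the folding inequality
\begin{equation}
    G_z^\T(x)\le\sum_{v\in\Z^d}G_z(x+rv)=G_z(x)+\sum_{v\ne 0}G_z(x+rv),
\end{equation}
where $x$ is identified with its representative in $[-r/2,r/2)^d\cap\Z^d$. Since $\|x+rv\|_\infty\ge\tfrac{r}{2}\|v\|_\infty$ for $v\ne 0$, Theorem~\ref{thm:mr} together with the shell decomposition $|\{v:\|v\|_\infty=k\}|\asymp k^{d-1}$ bounds the tail by a constant times $r^{-(d-2)}\sum_{k\ge 1}k\, e^{-c_1 m(z)rk/2}$. For $m(z)r\prec 1$ the geometric sum is $\asymp (m(z)r)^{-2}$, and the identity $\chi(z)\asymp m(z)^{-2}$ from \eqref{e:chiZdasy} and \eqref{e:massasy} converts the bound into $\asymp\chi(z)/V$; for $m(z)r\succ 1$ the tail is exponentially small in $r$ and trivially absorbed into $\chi(z)/V\ge 1/V$.

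For the lower bound, folding goes the wrong way, so I would pass to Fourier on both $\T_r^d$ and $\Z^d$. Poisson summation yields $\sum_{v\in\Z^d}G_z(x+rv)=V^{-1}\sum_k\hat G_z(k)e^{ik\cdot x}$ with $k$ ranging over the torus dual momenta, while $G_z^\T(x)=V^{-1}\sum_k\hat G_z^\T(k)e^{ik\cdot x}$. Isolating the zero mode,
\begin{equation}
    G_z^\T(x)-\sum_{v\in\Z^d}G_z(x+rv)
    =\frac{\chi^\T(z)-\chi(z)}{V}
    +\frac{1}{V}\sum_{k\ne 0}\bigl(\hat G_z^\T(k)-\hat G_z(k)\bigr)e^{ik\cdot x}.
\end{equation}
Reversing the shell-sum of the previous paragraph, using a near-critical pointwise lower bound $G_z(y)\succ\veee{y}^{-(d-2)}$ on the scale $\|y\|_\infty\prec m(z)^{-1}$ (inferred from the $z=z_c$ estimate noted after Theorem~\ref{thm:mr}), gives $\sum_{v\ne 0}G_z(x+rv)\succ\chi(z)/V$ in the near-critical regime $m(z)r\prec 1$. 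The plateau lower bound will then follow once both the zero-mode discrepancy $(\chi^\T-\chi)/V$ and the oscillatory sum above are shown to be of smaller order than $\chi(z)/V$.

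The main obstacle is the Fourier comparison $\hat G_z^\T(k)\approx\hat G_z(k)$ for $k\ne 0$: the torus and infinite-volume two-point functions satisfy lace-expansion identities of the form $\hat G=\hat C/(1-\hat\Pi)$ with different convolution kernels, and one must show that the difference of the corresponding lace-expansion diagrams is dominated by $\chi(z)/V$ uniformly in $k\ne 0$. This uses the torus lace expansion reviewed in Section~\ref{sec:lace} and diagrammatic estimates powered by Theorem~\ref{thm:mr}. The two-sided restriction on $z$ in the lower bound reflects where this comparison is sharp: the upper endpoint $z_c-c_4\beta^{1/2}r^{-d/2}$ ensures that $\chi(z)/V\asymp 1/(Vm(z)^2)$ dominates the diagrammatic error, while the lower endpoint $z_c-c_3 r^{-2}$ keeps $m(z)r\prec 1$ so that the near-critical estimates remain effective.
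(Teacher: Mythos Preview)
Your upper-bound argument is correct and coincides with what the paper does: the folding inequality $G_z^\T(x)\le\Gamma_z(x)=\sum_{v}G_z(x+rv)$ is \eqref{e:GGam}, and the tail bound $\sum_{v\ne 0}G_z(x+rv)\prec\chi(z)/V$ is exactly \eqref{e:plateau-ub} in Lemma~\ref{lem:GT3}, proved via the shell sum you describe (packaged as Lemma~\ref{lem:unifmassint} with $a=2$, together with \eqref{e:mchi}).

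The paper does not prove the lower bound; Theorem~\ref{thm:plateau} is quoted from \cite{Slad20_wsaw}.  Your sketch has the right architecture---show $\sum_{v\ne 0}G_z(x+rv)\succ\chi/V$ and then control the defect $\Gamma_z(x)-G_z^\T(x)\ge 0$---but two of its ingredients are not as cheap as you suggest.  First, the near-critical pointwise lower bound $G_z(y)\succ\veee{y}^{-(d-2)}$ for $\|y\|_\infty\prec m(z)^{-1}$ cannot be ``inferred'' from the $z=z_c$ lower bound by monotonicity, since monotonicity in $z$ goes the wrong way ($G_z\le G_{z_c}$); an actual near-critical lower bound on $G_z$ is needed and is a separate input.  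Second, for the Fourier comparison you need a $k$-uniform estimate $|\hat\Pi_z^\T(k)-\hat\Pi_z(k)|\prec\beta(r^2+\chi)/V$ together with a torus infrared lower bound on $1-2dz\hat D(k)-\hat\Pi_z^\T(k)$ for $k\ne 0$.  Granting both, the sum $V^{-1}\sum_{k\ne 0}|k|^{-4}$ is $O(1)$ for $d>4$ and the nonzero-mode correction is $O(\beta\chi/V)$, which is fine.  The zero-mode piece is more delicate: writing $\chi-\chi^\T=\Delta\,\chi\chi^\T$ with $\Delta$ as in Proposition~\ref{prop:AD-new} yields $(\chi-\chi^\T)/V\prec\beta\chi^2(r^2+\chi)/V^2$, and at the upper endpoint $z=z_c-c_4\beta^{1/2}V^{-1/2}$ this is only $O(\chi/V)$, not $o(\chi/V)$, so extracting a positive constant $c_1$ requires careful constant-tracking (this is presumably where the $\beta^{1/2}$ in the endpoint originates).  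Minor correction: the lace-expansion identity from Proposition~\ref{prop:lace} is $\hat G_z(k)=1/(1-2dz\hat D(k)-\hat\Pi_z(k))$, not $\hat C/(1-\hat\Pi)$.
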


By \eqref{e:chiZdasy}, $\chi(z_c(1-V^{-p})) \sim A V^p$.
Informally, the plateau states that
for $z_c-z$ of order $V^{-p}$ with $p \in [\frac{2}{d},\frac 12]$,
the torus two-point function evaluated at $x$ has
the $|x|^{-(d-2)}$ decay of the critical $\Z^d$ two-point function until its value reaches
order $V^{-(1-p)}$, beyond which it fixates at
this ``plateau'' value. This quantifies the distance at which the two-point function begins to ``feel'' it is on
the torus.
From the plateau, as noted in \cite[Corollary~1.3]{Slad20_wsaw}, it is easy to obtain
the lower bound of \eqref{e:chiupperlower} simply by neglecting the
$G_z(x)$ term on the left-hand side of \refeq{plateau} and then summing over $x \in \T_r^d$
to obtain
\begin{equation}
	c_1\chi(z)
    \leq
    \chi^\T(z)
    \le \chi(z).
\end{equation}
The upper bound holds without restriction since $c_n^\T \le c_n$, and the lower bound
holds for $d>4$, small $\beta>0$, and for
$z \in  [ z_c- c_3 r^{-2}, z_c - c_4\beta^{1/2} r^{-d/2}]$.

\subsection{A convolution lemma}

We write $*$ for the convolution of functions $f,g$ defined on $\Z^d$:
\begin{equation}
\label{e:Zconv}
    (f*g)(x) = \sum_{y \in \Z^d}f(x-y)g(y).
\end{equation}
We use the following lemma repeatedly to bound convolutions.
It extends \cite[Proposition~1.7]{HHS03} by including the possibility of exponential
decay as well as including the case $a+b \leq d$.  The important $\nu$ values are
small or zero.

\begin{lemma}
\label{lem:conv-nu}
Let $\nu \ge 0$.
Suppose that $f, g : \Z^d\to \C$ satisfy
$| f(x) | \leq \veee{x}^{-a} e^{-\nu\|x\|_\infty}$ and
$| g(x) | \leq \veee{x}^{-b} e^{-\nu\|x\|_\infty}$
with $a \geq b \ge 0$.  There is a constant $C$ depending on $a, b, d$
such that
    \begin{equation}
    \label{e:fg.2}
        \bigl | (f*g)(x) \bigr | \leq
   \begin{cases}
    C  \veee{x}^{-b}  e^{-\nu\|x\|_\infty}
   & (a > d )
   \\
   C \veee{x}^{d-(a + b)} e^{-\nu\|x\|_\infty}  &
       (a < d \mbox{ and } a+b>d)
   \\
   C \nu^{a+b-d}   e^{- \frac 12 \nu \|x\|_\infty}
   &  (a+b < d)
     \\
   C|\log (\nu\veee{x})|
    \1_{\nu \xvee \leq 1} + C
   e^{-  \nu \|x\|_\infty}
   \1_{\nu \xvee\geq 1}
   &(a< d \mbox{ and }a+b =d).
   \\
    C  \log(1+\xvee)\xvee^{-b}e^{-\nu\|x\|_\infty}
   &(a= d \mbox{ and }a+b >d).
   \\
   C|\log (\nu\veee{x})|\1_{\nu \xvee \leq 1} + C\log(1+\xvee)
   e^{- \nu \|x\|_\infty}\1_{\nu \xvee\geq 1}
   &(a= d \mbox{ and } b=0
   ).
   \end{cases}
    \end{equation}
Note that the fourth and sixth bounds, for which $a+b=d$, are infinite when $\nu=0$.
\end{lemma}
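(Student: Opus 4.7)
The plan is to bound $(f*g)(x) = \sum_y f(x-y) g(y)$ by partitioning the summation variable $y$ into regions according to which of $y$ or $x-y$ is close to the origin, then reducing each piece to a single-sum estimate. Let $R = \xvee$ and decompose $\Z^d = A \cup B \cup C$ with $A = \{y : \|y\|_\infty \le R/2\}$, $B = \{y : \|x-y\|_\infty \le R/2\}$, and $C$ the rest. On $A$ the triangle inequality gives $\|x-y\|_\infty \ge R/2$, hence $|f(x-y)| \prec R^{-a} e^{-\nu R/2}$; on $B$ symmetrically $|g(y)| \prec R^{-b} e^{-\nu R/2}$; on $C$ both $\|y\|_\infty$ and $\|x-y\|_\infty$ exceed $R/2$ and moreover $e^{-\nu\|y\|_\infty} e^{-\nu\|x-y\|_\infty} \le e^{-\nu R}$.

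The key auxiliary ingredient is a bound on the single sum
\begin{equation*}
    S_c(\nu, R) = \sum_{\|y\|_\infty \le R} \veee{y}^{-c} e^{-\nu\|y\|_\infty},
\end{equation*}
obtained by dyadic slicing of the annuli $\{\|y\|_\infty \asymp 2^k\}$ (each of cardinality $\asymp 2^{kd}$). This gives $S_c(\nu, R) \prec 1$ for $c > d$; $S_c(\nu, R) \prec \log\min\{R, \nu^{-1}\}$ for $c = d$; and $S_c(\nu, R) \prec \min\{R, \nu^{-1}\}^{d-c}$ for $c < d$ (with the convention $\nu^{-1} = \infty$ when $\nu = 0$). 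In particular the unrestricted sum $S_c(\nu, \infty)$ is finite precisely when $c > d$ or $\nu > 0$, and the ``active'' cutoff is the smaller of $R$ and $\nu^{-1}$.

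Substituting these bounds: the $A$-piece is controlled by $R^{-a} e^{-\nu R/2} S_b(\nu, R/2)$, the $B$-piece by $R^{-b} e^{-\nu R/2} S_a(\nu, R/2)$, and the $C$-piece by a similar dyadic analysis that yields at worst $R^{d-(a+b)} e^{-\nu R}$ with logarithmic or $\nu^{a+b-d}$ corrections at borderline values. Matching the three contributions against the six cases of \eqref{e:fg.2} selects each bound in turn: when $a > d$ the $B$-piece dominates and gives the stated $\xvee^{-b} e^{-\nu\|x\|_\infty}$; when $a < d$ with $a+b > d$ the sums $S_a, S_b$ are $O(1)$ and the $C$-piece delivers the $\xvee^{d-(a+b)}$; when $a+b < d$ the unbounded single sums $S_c(\nu, \infty)$ inject the $\nu^{a+b-d}$, which combines with $e^{-\nu R/2}$ to give the third bound; and the logarithmic borderline bounds arise either from $S_c$ at critical summability ($c = d$) or from the analogous dyadic sum over $C$ when $a+b = d$.

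The main obstacle is the careful bookkeeping in the borderline cases $a+b = d$ and $a = d$. In the fourth and sixth cases of \eqref{e:fg.2} one must distinguish the regime $\nu\xvee \le 1$ (where the polynomial cutoff $R$ in $S_c$ is active, producing $|\log(\nu\xvee)|$) from the regime $\nu\xvee \ge 1$ (where the exponential cutoff $\nu^{-1}$ dominates and $e^{-\nu\|x\|_\infty}$ survives cleanly). A secondary point is that the convolution only inherits exponential decay at rate $\tfrac12 \nu$ rather than $\nu$ in the strictly-subcritical range $a+b < d$, because the triangle inequality splits the exponential factor unevenly across $A$ and $B$; this is precisely why the third case of the lemma is stated with $e^{-\tfrac12 \nu \|x\|_\infty}$ in place of $e^{-\nu\|x\|_\infty}$.
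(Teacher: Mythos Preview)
Your three-region decomposition $A\cup B\cup C$ is a standard and valid alternative to the paper's approach, which instead splits $\Z^d$ into $\{|x-y|\le |y|\}$ and its complement, uses $a\ge b$ and the substitution $z=x-y$ to reduce to a single sum, and then (for some cases) further subdivides according to whether $|y|$ is of order $|x|$ or much larger.  The two strategies are equivalent in spirit; the paper's reduction is slightly slicker but yours would work as well.

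However, your exponential bookkeeping has a genuine flaw.  By bounding $|f(x-y)|\prec R^{-a}e^{-\nu R/2}$ on $A$ (using only $\|x-y\|_\infty\ge R/2$) and absorbing the $g$-exponential into $S_b(\nu,R/2)$, you extract at most $e^{-\nu R/2}$ from the $A$- and $B$-pieces.  That is fine for the third case, but cases 1, 2, 4, 5, 6 of the lemma demand the full $e^{-\nu\|x\|_\infty}$, and your bound as written does not deliver it: for instance in case~2 your $A$-piece is $R^{d-(a+b)}e^{-\nu R/2}$, which is weaker than the claimed $R^{d-(a+b)}e^{-\nu R}$ by an unbounded factor.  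The fix is to first apply the triangle inequality to the \emph{product} of exponentials, $e^{-\nu\|x-y\|_\infty}e^{-\nu\|y\|_\infty}\le e^{-\nu\|x\|_\infty}$, thereby extracting the full decay, and only then control the remaining polynomial sum by $S_b(0,R/2)$.  For $a+b>d$ that sum is $O(R^{d-b})$ (or $O(1)$ if $b>d$) and the argument closes.  It is precisely in case~3, where $S_b(0,\infty)$ diverges, that the exponential cannot be spared from the sum and only $e^{-\nu R/2}$ survives --- as you correctly observe.

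A related slip: the claim that ``when $a<d$ with $a+b>d$ the sums $S_a,S_b$ are $O(1)$'' is false.  With $b\le a<d$ one has $S_b(\nu,R/2)\asymp \min\{R,\nu^{-1}\}^{d-b}$, which grows.  The $A$-piece still gives the right power because $R^{-a}\cdot R^{d-b}=R^{d-(a+b)}$, not because the single sums are bounded.
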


\begin{proof}
In the proof, we write simply $|x|$ in place of $\|x\|_\infty$.
By definition,
\begin{equation}
    | (f*g)(x) | \leq
    \sum_{y: |x-y| \leq |y|}
    \frac{e^{-\nu|x-y|}}{\xyvee^{a}} \frac{e^{-\nu|y|}}{\yvee^{b}}
    +
    \sum_{y: |x-y| > |y|}
    \frac{e^{-\nu|x-y|}}{\xyvee^{a}} \frac{e^{-\nu|y|}}{\yvee^{b}}.
\end{equation}
Using $a \geq b$ and the change of variables $z=x-y$ in the second
term, we see that
\begin{equation}
\label{e:fg.3}
    | (f*g)(x) |  \leq
    2\sum_{y: |x-y| \leq |y|}
    \frac{e^{-\nu|x-y|}}{\xyvee^{a}} \frac{e^{-\nu|y|}}{\yvee^{b}}
    .
\end{equation}

\medskip \noindent \emph{Case of} $a>d$.
The restriction $|x-y| \leq |y|$ in \eqref{e:fg.3}
ensures that $|y| \geq \frac{1}{2}|x|$, so
 by using the triangle inequality for the exponentials we find
that
\begin{equation}
    | (f*g)(x) |  \leq
    \frac{2^{b+1} e^{-\nu |x|}}{\xvee^{b}}
    \sum_{y: |x-y| \leq |y|}
    \frac{1}{\xyvee^{a}}
    .
\end{equation}
The sum converges and we obtain the desired estimate.

\medskip \noindent \emph{Case of} $a < d$ and $a+b>d$.
We divide the sum in \eqref{e:fg.3}
according to whether
$|y| \leq \frac{3}{2}|x|$ or
$|y| \geq \frac{3}{2}|x|$.  The contribution due to the
first range of $y$ is bounded above by
\begin{equation}
\label{e:fg.7}
	\frac{2^{b+1} e^{- \nu |x|}}{\xvee^b}
    \sum_{y: |x-y| \leq 3|x|/2}
    	\frac{1}{\xyvee^{a}}
    	\leq
    	\frac{C}{\xvee^b} \xvee^{d-a} e^{- \nu |x|}.
\end{equation}
When $|y| \geq \frac{3}{2}|x|$, we have $|y-x| \geq |y|-|x| \geq \frac 13 |y|$,
so the sum over the second range
of $y$ is bounded above by
\begin{equation}
\label{e:fg.8}
	3^a \cdot 2 \sum_{y: |y| \geq 3|x|/2}
    	 \frac{e^{-\nu|y|}}{\yvee^{a+b}}
    	.
\end{equation}
We extract the exponential factor using the lower limit of summation,
which leaves the tail of a convergent series.  This gives the desired result.

\medskip \noindent \emph{Case of} $a+b< d$.
In particular, this implies that $a <d$. By \eqref{e:fg.3},
since $|y| \geq \frac12 |x|$, we have
\begin{align}
	| (f*g)(x) |  &\leq 2e^{-\frac\nu2 |x|}\sum_{y: |x-y| \leq |y|}
    \frac{e^{-\nu|x-y|}}{\veee{x-y}^{a+b}}
    \leq 2e^{-\frac\nu2 |x|} \sum_{y \in \Z^d}\frac{e^{-\nu |y|}}{\yvee^{a+b}}.
\end{align}
This last sum is dominated by a multiple of
\begin{align}
	\int_{1}^\infty e^{-\nu t}t^{d-1-(a+b)} \D t
    = \nu^{a+b-d}\int_{\nu}^\infty e^{-t}t^{d-1-(a+b)} \D t,
\end{align}
which is bounded above by a multiple of $\nu^{a+b-d}$.

\medskip \noindent \emph{Case of} $a<d$ and $a+b = d$.
We can adapt the proof of the case with $a<d$ and $a+b > d$ by
again dividing the sum according to whether $|y| \leq \frac32 |x|$ or $|y|\geq \frac 32|x|$.
Since $a<d$, \eqref{e:fg.7} remains unchanged and gives a contribution of the form
\begin{equation}
     \veee{x}^{d-a-b}e^{-\nu |x|} =  e^{-\nu |x|}.
\end{equation}
For $a+b = d$, in \eqref{e:fg.8} we can take $\yvee \ge \xvee$ instead of
$|y| \ge 3|x|/2$
in the summation restriction and the sum is then dominated by
\begin{align}
\label{e:fg.11}
	\int_{\xvee}^\infty \frac{e^{-\nu t}}{t} \D t
    &= \int_{\nu \xvee}^\infty \frac{e^{-t}}{t} \D t
    \leq |\log(\nu\xvee)|\1_{\nu \xvee \leq 1}
    + \frac{e^{-\nu |x|}}{\nu \xvee} \1_{\nu\xvee\geq 1}.
\end{align}

\medskip \noindent \emph{Case of} $a=d$ and $a+b \geq d$.
We proceed as in the previous case, again considering separately the
cases $|y| \leq \frac32 |x|$ or $|y|\geq \frac 32|x|$.
With $a=d$, \eqref{e:fg.7} becomes instead
\begin{equation}
\label{e:fg.12}
	\frac{2^{b+1}e^{-\nu |x|}}{\xvee^b}
    \sum_{y: |x-y| \leq 3|x|/2}
    	\frac{1}{\xyvee^{d}} \leq C\frac{\log(1 + \veee{x})}{\xvee^b}e^{-\nu |x|}.
\end{equation}
Exactly as in \eqref{e:fg.11},
when $a+b = d$, i.e., when $b=0$, the second range of $y$ gives a contribution
\begin{equation}
|\log (\nu\veee{x})|
\1_{\nu \xvee \leq 1} + \frac{e^{-\nu |x|}}{\nu \xvee}\1_{\nu \xvee\geq 1},
\end{equation}
and with \eqref{e:fg.12} this proves the case $a=d$, $b=0$ of \eqref{e:fg.2}.
If instead $a=d$ and $b>0$
then \eqref{e:fg.8} is bounded by
\begin{equation}
 \frac{1}{\veee{x}^{b}}
 e^{-\nu |x|} \prec \frac{\log(1 + \veee{x})}{\xvee^b}e^{-\nu |x|}.
\end{equation}
This gives the desired result and completes the proof.
\end{proof}

\subsection{Near-critical estimates}

For later use, we gather here some  consequences of Theorem~\ref{thm:mr}.
A minor observation is that
if $x \in \T_r^d$ is regarded as a point in $[-\frac r2, \frac r2)^d \cap \Z^d$
then
$\frac 12 \|ru\|_\infty\le \|x+ru\|_\infty \le \frac 32 \|ru\|_\infty$
uniformly in $x \in \T_r^d$
and in nonzero $u\in \Z^d$, since
\begin{equation}
\label{e:xulb}
    \|x+ru\|_\infty \ge \|ru\|_\infty - \frac r2 \ge  \|ru\|_\infty - \frac 12\|ru\|_\infty
    = \frac 12 \|ru\|_\infty
    ,
\end{equation}
\begin{equation}
\label{e:xuub}
    \|x+ru\|_\infty \le  \frac r2 + \|ru\|_\infty  \le  \frac 12\|ru\|_\infty + \|ru\|_\infty
    = \frac 32 \|ru\|_\infty.
\end{equation}
We write $a \in \R$ as $a=a_+-a_-$ with $a_+ = \max\{a,0\}$ and $a_- = -\min\{a,0\}$.

\begin{lemma}
\label{lem:unifmassint}
Let $r \ge 1$, $a \in \R$ and $\nu >0$.  There
is a constant $C_a$ (independent of $\nu,r$)
such that, for $x\in \T_r^d$,
\begin{align}
	\sum\limits_{u \in\Z^d : u \neq 0} \frac{1}{\veee{x + r u}^{d-a}}e^{- \nu \|x+ru\|_\infty}
	&\leq
    C_a
    e^{-\frac 14 \nu r}
\times
    \begin{cases}
     r^{-(d+a_-)} \nu^{-a_+}  & (a\neq 0)
     \\
     r^{-d}|\log (\nu r)|  & (a= 0).
     \end{cases}
\end{align}
\end{lemma}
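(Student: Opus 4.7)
The plan is to use the comparison \eqref{e:xulb}--\eqref{e:xuub}, which gives $\|x+ru\|_\infty \asymp r\|u\|_\infty$ uniformly in $u\in\Z^d\setminus\{0\}$ and $x\in\T_r^d$ (identified with its representative in $[-r/2,r/2)^d$), to reduce the torus sum to a radial sum on $\Z^d$ depending only on $\|u\|_\infty$. From \eqref{e:xulb} one has $\veee{x+ru} \ge \tfrac{r}{2}\|u\|_\infty$ whenever $r\ge 2$, while the $r=1$ case is trivial since then $\T_1^d$ is a single point. Using $\|u\|_\infty \ge 1$ to split
\begin{equation*}
    \tfrac{r}{2}\|u\|_\infty \;\ge\; \tfrac{r}{4} + \tfrac{r}{4}\|u\|_\infty,
\end{equation*}
one obtains $e^{-\nu\|x+ru\|_\infty} \le e^{-\nu r/4}\,e^{-\nu r\|u\|_\infty/4}$, so the sum in the lemma is bounded by a constant multiple of
\begin{equation*}
    r^{-(d-a)}\,e^{-\nu r/4}\sum_{u\in\Z^d\setminus\{0\}} \|u\|_\infty^{-(d-a)}\,e^{-\nu r\|u\|_\infty/4},
\end{equation*}
and the exponential factor $e^{-\nu r/4}$ demanded by the lemma is already in place.

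Next I would group the remaining sum by $\ell^\infty$-shells: since $|\{u:\|u\|_\infty = k\}| = (2k+1)^d-(2k-1)^d \asymp k^{d-1}$ for $k\ge 1$, the sum is a constant multiple of $S(t,a) := \sum_{k\ge 1} k^{a-1} e^{-tk}$ with $t := \nu r/4$, so the lemma boils down to bounding $r^{a-d}\,S(\nu r/4,a)$ by the claimed right-hand side. I would then handle the three cases separately. For $a>0$, an integral comparison yields $S(t,a) \prec t^{-a}$ for $0<t\le 1$; for $t\ge 1$ termwise domination gives $S(t,a) \prec e^{-t/2}$, and since $t^a e^{-t/2}$ is globally bounded the two regimes combine to the uniform bound $S(t,a) \prec t^{-a}$, which after restoring the prefactor produces $r^{-d}\nu^{-a} e^{-\nu r/4}$. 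For $a<0$ the series $\sum_k k^{a-1}$ converges absolutely, so $S(t,a) \le S(0,a) \prec 1$, and the prefactor yields $r^{a-d} = r^{-(d+a_-)}$ directly (with $\nu^{-a_+} = 1$). For $a=0$ the explicit identity $S(t,0) = -\log(1-e^{-t})$ gives $S(t,0) \prec |\log t|$ for small $t$ and $S(t,0) \prec e^{-t}$ for large $t$, producing the logarithmic bound $r^{-d}|\log(\nu r)| e^{-\nu r/4}$ after the prefactor is restored.

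The step requiring the most care is the $a>0$ range, where after pulling out $e^{-\nu r/4}$ one must verify that the leftover exponential in $S(\nu r/4,a)$ still dominates the polynomial factor $(\nu r)^a$ in the regime $\nu r \gg 1$; this is exactly what forces the exponent $1/4$ rather than $1/2$ in $e^{-\nu r/4}$, and dictates the asymmetric split of powers of $r$ and $\nu$ between the factors $r^{-(d+a_-)}$ and $\nu^{-a_+}$. Otherwise the lemma is a purely geometric consequence of \eqref{e:xulb}--\eqref{e:xuub} combined with classical $\ell^\infty$ shell-counting and elementary one-variable sum estimates; no plateau or lace-expansion input is needed.
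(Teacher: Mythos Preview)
Your proposal is correct and follows essentially the same route as the paper's own proof: both invoke \eqref{e:xulb} to replace $\|x+ru\|_\infty$ by $\tfrac{r}{2}\|u\|_\infty$, split the exponential to extract the factor $e^{-\nu r/4}$, group the remaining sum into $\ell^\infty$-shells $\|u\|_\infty=k$ to obtain the one-variable sum $\sum_{k\ge 1}k^{a-1}e^{-\nu r k/4}$, and then treat the cases $a<0$, $a=0$, $a>0$ separately. Your use of the closed form $S(t,0)=-\log(1-e^{-t})$ and your explicit handling of the large-$t$ regime for $a>0$ are minor cosmetic differences; the paper instead passes directly to the integral $\int_{\nu r}^\infty t^{a-1}e^{-t/4}\,dt$, but the substance is the same.
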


\begin{proof}
For $a<0$, we simply note that the sum is convergent without the exponential factor, so using \eqref{e:xulb} and extracting the factoring $r^{d+a_-}$ from the sum gives the result.

Suppose that $a\ge 0$. It follows
from
\eqref{e:xulb} that for any nonzero $u \in \Z^d$,
$\|x + r u\|_\infty \geq \frac12 \|ru\|_\infty$ and thus
\begin{align}
	\sum\limits_{u \neq 0} \frac{1}{\veee{x + r u}^{d-a}}e^{-\nu \|x+ru\|_\infty}
	&\leq
    \sum\limits_{u \neq 0} \frac{1}{  (\frac 12 \|ru\|_\infty)^{d-a}}
    e^{-\frac{1}{2} \nu \|ru\|_\infty}\nnb
	&\le 2^{a-d}e^{-\frac 14 \nu r}\sum\limits_{N = 1}^\infty \sum\limits_{u:\|u\|_\infty =N}
    \frac{1}{\|ru\|_\infty^{d-a}}e^{-\frac 14 \nu \|ru\|_\infty}
    \nnb
    &\prec r^{a-d}e^{-\frac 14 \nu r}
    \sum\limits_{N = 1}^\infty N^{d-1 - d + a }e^{-\frac 14 \nu rN}
	.
\end{align}
We bound the sum on the right-hand side by an integral to obtain an upper bound
which is a constant multiple of
\begin{align}
	r^{a-d}e^{-\frac 14 \nu r}
    \int_1^\infty u^{a-1} e^{-\frac 14 \nu ru} \D u
    & =
    \frac{1}{\nu^a r^d} e^{-\frac 14 \nu r} \int_{\nu r}^\infty t^{a-1}e^{-t/4} \D t
    .
\end{align}
The integral is uniformly bounded if $a>0$ and behaves as $|\log (\nu r)|$
for $a=0$.
This concludes the proof.
\end{proof}

For $x \in \Z^d$ we define the open \emph{bubble diagram}
and the open \emph{triangle diagram}  by
\begin{equation}
    \bubble_z(x) = (G_z*G_z)(x),
    \qquad
    {\sf T}_z(x) = (G_z*G_z*G_z)(x).
\end{equation}
As the following lemma shows, the critical bubble diagram is bounded for all $d>4$.
However the critical triangle
diagram\footnote{It may appear strange to see the triangle diagram appearing
in a model with upper critical dimension $4$.  We use it in the proof of
Proposition~\ref{prop:AD-new} in Section~\ref{sec:AD}.}
is finite only in dimensions $d>6$, and the lemma gives a bound on the rate of divergence
when $d \le 6$.

\begin{lemma}
\label{lem:G3}
Let $d >4$ and let $\beta$ be sufficiently small.  For $x \in \Z^d$ and $z \in (0,z_c)$,
and with $c_1$ the constant from \eqref{e:Gmr},
\begin{align}
\label{e:bubblebd}
    \bubble_z(x) & \prec
    \frac{e^{-c_1 m(z) |x|}}{\veee{x}^{d-4}},
\\
\label{e:GGG}
    {\sf T}_z(x)
    &\prec
    \begin{cases}
    m(z)^{-(6-d)} e^{-\frac {1}{2} c_1 m(z)\|x\|_\infty} & (d< 6)
    \\
    |\log (c_1 m(z)\veee{x})|
    \1_{c_1 m(z) \xvee  \leq 1} +
    e^{-  c_1 m(z) \|x\|_\infty}
   \1_{c_1 m(z) \xvee\geq 1}
     & (d= 6)
    \\
    \veee{x}^{-(d-6)}e^{- c_1 m(z) \|x\|_\infty} & (d> 6).
    \end{cases}
\end{align}
\end{lemma}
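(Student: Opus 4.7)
The plan is to derive both estimates by direct application of the convolution bound in Lemma~\ref{lem:conv-nu}, using Theorem~\ref{thm:mr} to supply the pointwise decay input $|G_z(x)| \le c_0\veee{x}^{-(d-2)}e^{-c_1 m(z)\|x\|_\infty}$. In every application the common exponential rate $\nu = c_1 m(z)$ appears in both factors, which matches the hypothesis of Lemma~\ref{lem:conv-nu}.

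First I would prove the bubble bound \eqref{e:bubblebd}. Apply Lemma~\ref{lem:conv-nu} with $f=g=G_z$, so that $a=b=d-2$ and $\nu=c_1 m(z)$. Since $d>4$, we have $a=d-2<d$ and $a+b=2d-4>d$, which places us in the second case of \eqref{e:fg.2}. That case gives
\begin{equation}
\bubble_z(x) \le C\veee{x}^{d-(a+b)} e^{-\nu\|x\|_\infty} = C\veee{x}^{-(d-4)} e^{-c_1 m(z)\|x\|_\infty},
\end{equation}
which is exactly \eqref{e:bubblebd}. The point to note is that the convolution preserves the same exponential rate $\nu = c_1 m(z)$ (no loss of a factor $\tfrac12$), so the bubble is a legitimate input for a second application of the convolution lemma.

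Next I would write ${\sf T}_z = G_z * \bubble_z$ and apply Lemma~\ref{lem:conv-nu} with $a=d-2$ (from $G_z$) and $b=d-4$ (from the bubble), again with the common rate $\nu=c_1 m(z)$. The three sub-cases of the lemma correspond exactly to the three displayed cases in \eqref{e:GGG}:
\begin{itemize}
\item For $4<d<6$ we have $a+b=2d-6<d$, so the third case of \eqref{e:fg.2} gives the bound $C\nu^{a+b-d} e^{-\frac12\nu\|x\|_\infty}=C\,m(z)^{-(6-d)}e^{-\frac12 c_1 m(z)\|x\|_\infty}$ (absorbing $c_1^{\,d-6}$ into the constant).
\item For $d=6$ we have $a=4<d=6$ and $a+b=d$, so the fourth case gives the displayed logarithmic bound.
\item For $d>6$ we have $a<d$ and $a+b=2d-6>d$, so the second case gives $C\veee{x}^{d-(a+b)}e^{-\nu\|x\|_\infty}=C\veee{x}^{-(d-6)}e^{-c_1 m(z)\|x\|_\infty}$.
\end{itemize}

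There is essentially no obstacle beyond matching the parameters $a,b,\nu$ and the dimensional regime with the correct case of Lemma~\ref{lem:conv-nu}; in particular one must verify that $a\ge b$ as the lemma requires, which is automatic since $d-2\ge d-4$. The only mildly delicate point is the intermediate step: the lower threshold $d>4$ is exactly what makes the bubble fall in the regime $a+b>d$ of Lemma~\ref{lem:conv-nu}, which is what preserves the exponential rate $\nu$ intact so that the subsequent convolution with $G_z$ remains a clean two-factor application.
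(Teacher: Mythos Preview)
Your proof is correct and follows essentially the same route as the paper: both apply Lemma~\ref{lem:conv-nu} first with $a=b=d-2$ to obtain the bubble bound, then with $a=d-2$, $b=d-4$ (writing ${\sf T}_z=G_z*\bubble_z$) and split into the three dimensional regimes $a+b<d$, $a+b=d$, $a+b>d$. Your observation that the bubble step preserves the full exponential rate $\nu=c_1 m(z)$, so that the second application has matching rates in both factors, is exactly the point that makes the argument go through cleanly.
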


\begin{proof}
We write $m=m(z)$.  The bound on the bubble diagram is an immediate consequence of
the bound on $G_z(x)$ from \eqref{e:Gmr} together with the convolution estimate
\eqref{e:fg.2} with $a=b=d-2$.
For the triangle diagram, we use \eqref{e:Gmr} and \eqref{e:bubblebd} to obtain
\begin{align}
    {\sf T}_z(x) = (G_{z}* \bubble_{z} )(x)
    & \prec
    \sum_{v\in \Z^d}
    \frac{e^{- c_1 m  \|v-x\|_\infty}}{\veee{v-x}^{d-2}}
    \frac{e^{-c_1m \|v\|_\infty}}{\veee{v}^{d-4}}
    .
\end{align}
Now we apply Lemma~\ref{lem:conv-nu} with $a=d-2$ and $b=d-4$, so $a+b=2d-6$.
\end{proof}

Bounds expressed in terms of the mass $m(z)$, such as \eqref{e:GGG},
 can also be expressed in terms of the
susceptibility $\chi(z)$ since
\begin{equation}
\label{e:mchi}
    \frac{1}{m(z)^2} \prec \chi(z).
\end{equation}
To prove \eqref{e:mchi}, we first
fix any $z_1 \in (0,z_c)$.  For $z \le z_1$, since $m$ is decreasing and
since $1=\chi(0) \le \chi(z)$, we have $m(z)^{-2} \le m(z_1)^{-2} \le
m(z_1)^{-2}\chi(z)$ and the desired upper bound
follows for $z \in (0,z_1]$.
We can choose $z_1$ close enough to $z_c$ that $m(z)^{-2}$ and $\chi(z)$ are
comparable for $z\in (z_1,z_c)$, since
each is asymptotic to a multiple of $(1-z/z_c)^{-1}$
by \eqref{e:massasy} and \eqref{e:chiZdasy}.
In particular
for $z_1$ close enough to $z_c$ there exists $C$ such that $m(z)^{-2} \leq C \chi(z)$ for  $z\in[z_1,z_c)$.

We define the function $\Gamma_z : \Z^d \to \R$  by
\begin{align}
\label{e:def-gamma}	
	\Gamma_z(x) &= \sum_{u \in \Z^d} G_z(x+ru)  \qquad (x \in \Z^d).
\end{align}
Note that $\Gamma_z(x) = \Gamma_z(y)$ whenever $x,y\in \Z^d$ project
to the same torus point.
The function $\Gamma_z$ arises naturally since
\begin{equation}
\label{e:GGam}
    G^\T_z(x) \le \Gamma_z (x) \qquad (x\in \T_r^d),
\end{equation}
which follows from the fact
that the lift of a torus walk to $x$ must end at a point $x+ru$ in $\Z^d$,
and the lift of the walk can have no more intersections than the walk itself.
Also,
\begin{equation}
    \chi(z) = \sum_{x \in \T_r^d} \Gamma_z(x).
\end{equation}

We write $\star$ for the convolution of functions $f,g$ on the torus:
\begin{equation}
\label{e:Tconv}
    (f\star g)(x) = \sum_{y\in \T_r^d}f(x- y)g(y),
\end{equation}
where on the right-hand side the subtraction is
on the torus, i.e., modulo $r$.
Note that we make a distinction between
$*$ for convolution in $\Z^d$ and $\star$ for convolution in $\T_r^d$.
The combination of \eqref{e:GGam} with the estimate \eqref{e:plateau-ub}
from the next lemma provides the
proof of the plateau upper bound in \eqref{e:plateau} with $c_2=C$.
Also, again using \eqref{e:GGam}, the  lemma gives bounds on the torus convolutions
$(G^\T_z \star G^\T_z )(x)$ and $(G^\T_z \star G^\T_z \star G^\T_z )(x)$.

\begin{lemma}
\label{lem:GT3}
Let $d >4$ and let $\beta$ be sufficiently small.
For $x \in \T^d_r$ and $z \in [0,z_c]$,
\begin{align}
\label{e:plateau-ub}
	\Gamma_z(x) &\leq G_z(x) + C\frac{\chi(z)}{V} , \\
\label{e:GGstar}
    (\Gamma_z \star \Gamma_z )(x)
    &\le \bubble_z(x) + C\frac{\chi(z)^2}{V}
    ,
\\
\label{e:GGGstar}
    (\Gamma_z \star \Gamma_z \star \Gamma_z)(x)
    &\le
    {\sf T}_z(x) + C\frac{\chi(z)^3}{V}
    .
\end{align}
\end{lemma}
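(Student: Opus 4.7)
My plan is to first establish the \emph{periodization identities}
\begin{align*}
\Gamma_z(x) &= G_z(x) + \sum_{u \in \Z^d\setminus\{0\}} G_z(x+ru), \\
(\Gamma_z \star \Gamma_z)(x) &= \bubble_z(x) + \sum_{u \in \Z^d\setminus\{0\}} \bubble_z(x+ru), \\
(\Gamma_z \star \Gamma_z \star \Gamma_z)(x) &= {\sf T}_z(x) + \sum_{u \in \Z^d\setminus\{0\}} {\sf T}_z(x+ru),
\end{align*}
for $x \in \T_r^d$ identified with its representative in $[-r/2,r/2)^d\cap\Z^d$. The first identity is immediate from \eqref{e:def-gamma}. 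For the second, I would unfold the torus convolution via the bijection $(y,v)\in \T_r^d\times\Z^d \mapsto y+rv \in \Z^d$: expanding $\Gamma_z(y) = \sum_v G_z(y+rv)$ and using the periodicity $\Gamma_z(x-y+rv) = \Gamma_z(x-y)$, the torus convolution collapses to $\sum_{w\in\Z^d}\Gamma_z(x-w)G_z(w)$, and expanding the remaining $\Gamma_z(x-w)$ then yields $\sum_{u\in\Z^d}(G_z * G_z)(x+ru)$. The triple-convolution identity is obtained by the same argument applied iteratively.

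Given these identities, Lemma~\ref{lem:GT3} reduces to bounding the three tails $\sum_{u\neq 0}$ by $\chi(z)/V$, $\chi(z)^2/V$, and $\chi(z)^3/V$ respectively. For the $G_z$ and $\bubble_z$ tails I would apply the decay estimate \eqref{e:Gmr} from Theorem~\ref{thm:mr} and the bubble estimate \eqref{e:bubblebd}, and invoke Lemma~\ref{lem:unifmassint} with $a=2$ and $a=4$; this yields $r^{-d}m(z)^{-2}$ and $r^{-d}m(z)^{-4}$, which are converted to $\chi(z)/V$ and $\chi(z)^2/V$ via the inequality $m(z)^{-2}\prec\chi(z)$ recorded in \eqref{e:mchi}.

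The triangle tail is more delicate and must be split by dimension using \eqref{e:GGG}. For $d>6$, polynomial decay and Lemma~\ref{lem:unifmassint} with $a=6$ give $r^{-d}m^{-6}\prec \chi^3/V$ directly. For $d=5$, only exponential decay is available, but Lemma~\ref{lem:unifmassint} with $a=d$ yields $r^{-5}m^{-5}$; combined with the prefactor $m^{-(6-d)}=m^{-1}$ this still produces $r^{-5}m^{-6}\prec \chi^3/V$. The main obstacle is the critical dimension $d=6$, where ${\sf T}_z$ carries a logarithmic singularity on $\{c_1 m\veee{y}\leq 1\}$: bounding the logarithm uniformly per lattice point would produce a spurious factor of $|\log(mr)|$ in the tail sum. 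My plan here is a shell-by-shell summation, exploiting $\|x+ru\|_\infty \asymp r\|u\|_\infty$ from \eqref{e:xulb} so that the logarithmic-regime tail reads $\sum_{N=1}^{\lfloor 1/(c_1 mr)\rfloor}N^{d-1}|\log(N c_1 mr)|$ and compares to the integral $(c_1 mr)^{-d}\int_{c_1 mr}^{1} t^{d-1}|\log t|\,dt$. Since $\int_0^1 t^{d-1}|\log t|\,dt<\infty$, this integral is uniformly bounded in $mr\in(0,1]$, yielding $(mr)^{-6} = m^{-6}/V \prec \chi^3/V$ with no residual logarithmic factor; the exponential-regime contribution in $d=6$ is handled as in the $d=5$ case.
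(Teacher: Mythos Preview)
Your proposal is correct and follows essentially the same route as the paper. You establish the periodization identities $(\Gamma_z^{\star k})(x)=\sum_{u\in\Z^d}(G_z^{*k})(x+ru)$, separate the $u=0$ term, and bound the tail via Lemma~\ref{lem:unifmassint} combined with \eqref{e:mchi}; the delicate $d=6$ logarithmic regime is handled in both arguments by the same integrability of $t^{d-1}|\log t|$ near $t=0$, with the paper writing this as a $d$-dimensional integral $\int_{\R^6}|\log\|v\|_\infty|\,dv$ and you writing it as the equivalent radial integral.
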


\begin{proof}
We write $m=m(z)$.
For \eqref{e:plateau-ub}, we separate the $u = 0$ term from the sum in \eqref{e:def-gamma}
and then apply \eqref{e:Gmr}, Lemma~\ref{lem:unifmassint}
with $a=2$, and \eqref{e:mchi}, to obtain
\begin{align}
\label{e:GammachiV}
	\Gamma(x) &\leq  G_z(x) + \sum_{u \neq 0}
	\frac{c_0}{\veee{x + r u}^{d-2}}e^{-c m |x+ru|}
	\leq G_z(x) + C\frac{\chi(z)}{V}.
\end{align}

For \eqref{e:GGstar}, we first observe that
\begin{align}
    (\Gamma_z \star \Gamma_z )(x)
    & =
    \sum_{y \in \T_r^d} \sum_{u,v \in \Z^d} G_z(y+ru) G_z(x-y + rv)
    \nnb & =
    \sum_{w\in\Z^d} \sum_{y \in \T_r^d} \sum_{v \in \Z^d} G_z(y-rv+rw) G_z(x-y + rv)
    \nnb & =
    \sum_{w\in \Z^d} (G_z*G_z)(x-rw)
    =
    \sum_{w\in \Z^d} \bubble_z(x+rw),
\label{e:GamGam}
\end{align}
where in the second equality we replace $u$  by $w-v$, and in the third we observe
that $y-rv$ ranges over $\Z^d$ under the indicated summations.
We again separate the $w=0$ term, which is $\bubble_z(x)$, and now use
\eqref{e:bubblebd}, Lemma~\ref{lem:unifmassint} and \eqref{e:mchi} to see that
\begin{align}
\label{e:Bbd}
    \sum_{w\neq 0} \bubble_z(x+rw)
     &\prec
    \sum_{w\neq 0}  \frac{e^{- c_1m  \|x+rw\|_\infty}}{\veee{x+rw}^{d-4}}
    \prec
    \frac{\chi(z)^2}{V}.
\end{align}

For \eqref{e:GGGstar}, as in \eqref{e:GamGam} we find that
\begin{align}
(\Gamma_z \star \Gamma_z \star \Gamma_z)(x)
&
=
    \sum_{w\in \Z^d} {\sf T}_z(x+rw)
.
\label{e:EMconv}
\end{align}
We extract the $w=0$ term, which is ${\sf T}_z(x)$.
For $d>6$, which has $\veee{x}^{6-d}e^{- c_1m |x|}$
in \eqref{e:GGG}, it follows from
Lemma~\ref{lem:unifmassint} and \eqref{e:mchi} that
\begin{equation}
    \sum_{w\neq 0}  \frac{e^{- c_1m  \|x+rw\|_\infty}}{\veee{x+rw}^{d-6}}
    \prec
    \frac{\chi(z)^3}{V}.
\end{equation}
For $4 < d < 6$, we use \eqref{e:GGG} and
Lemma~\ref{lem:unifmassint} (with $a=d$)
 to obtain an upper bound
\begin{equation}
    \frac{1}{m^{6-d}}\sum_{w\neq 0}e^{-\frac {1}{2} c_1m \|x+rw\|_\infty}
    \prec
    \frac{1}{m^{6-d}} \frac{1}{V} \frac{1}{m^d}
    \prec
    \frac{\chi(z)^3}{V}.
\end{equation}
For the final case $d=6$, we use \eqref{e:GGG} with $\nu  = c_1 m(z)$ to see that
\begin{align}
	\sum_{w \neq 0} {\sf T}_z(x+rw)
	&\prec \sum_{w \neq 0}
    |\log(\nu\veee{x+rw})|
    \1_{\nu \veee{x+rw} \leq 1}
    + \sum_{w \neq 0}e^{-\nu |x+rw|}
    \1_{\nu \veee{x+rw} >1} .
\end{align}
By Lemma~\ref{lem:unifmassint} with $d=a=6$,
the second sum is dominated by $V^{-1}\chi(z)^3$.
For the first sum,
we see from \eqref{e:xulb} that
\begin{equation}
    \nu \veee{x+rw} \ge \nu  \|x+rw\|_\infty \ge \frac 12  \nu r \|w\|_\infty
\end{equation}
and hence the first sum is dominated
by a multiple of
\begin{align}
    \int_{\nu r \|w\|_\infty/2 \le 1}
    |\log ( \nu r \|w\|_\infty/2)| \D w
    &
    =
    2^6 (\nu r)^{-6} \int_{\|v\|_\infty \le 1}
    |\log (\|v\|_\infty)| \D v
     \prec
    \frac{\chi(z)^3}{V},
\end{align}
since the logarithm is integrable.
This completes the proof.
\end{proof}

\section{The lace expansion}
\label{sec:lace}

Since its introduction by Brydges and Spencer in 1985 \cite{BS85},
the lace expansion has been discussed at length and derived many times in the literature.
In this section, we summarise
the definitions, formulas and estimates that we need for the proofs of
Propositions~\ref{prop:Flacebds-new}--\ref{prop:AD-new}.
This is well-established material and is as in the original paper \cite{BS85}.
Our presentation follows
\cite[Sections~3.2--3.3]{Slad06}, where the proofs we omit here are presented in detail,
and we refer to \cite{Slad06} in the following.
Although previous literature has developed the lace expansion in the setting of $\Z^d$,
as we discuss below it applies without
modification also to the torus.

\subsection{Graphs and laces}

\begin{defn}
\label{def-graph}
{\rm (i)}
Given an interval $I = [a,b]$ of positive integers, an \emph{edge} is a pair
$\{ s, t\}$
of elements of $I$, often written simply as $st$ (with $s<t$).
A set of edges (possibly the empty set) is called a \emph{graph}.
\newline
{\rm (ii)}
A graph $\Gamma$ is \emph{connected}\footnote{This
definition of connectivity is not the usual notion
of path-connectivity in graph
theory.  Instead, connected graphs are those
for which $\cup_{st \in \Gamma}(s,t)$ is equal to the connected
interval $(a,b)$.  This is the useful definition of connectivity for the lace expansion.}
if both $a$ and $b$ are
endpoints of edges in $\Gamma$, and if in addition, for any $c \in (a,b)$,
there is an edge $st \in \Gamma$ such that $s < c < t$.
\end{defn}

\begin{defn}
\label{def-lace}
A \emph{lace}
is a minimally connected graph, i.e., a connected graph for which
the removal of any edge would result in a disconnected graph.  The set of
laces on $[a,b]$
is denoted by $\Lcal [a,b]$, and the set of laces on
$[a,b]$ which consist of exactly $N$ edges is denoted $\Lcal ^{(N)} [a,b]$.
\end{defn}

A lace
$L \in \Lcal^{(N)}[a,b]$ can be written by listing its edges as $L = \{ s_1t_1, \ldots , s_Nt_N\}$,
with $s_l < t_l$ for each $l$.  For $N=1$, we simply have
$a=s_1<t_1=b$.   For $N \geq 2$, a graph is a lace
$L \in \Lcal^{(N)}[a,b]$
if and only if the edge endpoints are ordered according to
\begin{equation}
\label{e:LNij}
    a=s_1<s_2, \quad
    s_{l+1}<t_l\leq s_{l+2} \hspace{5mm} (l=1,\ldots,N-2), \quad
    s_N < t_{N-1} <t_N = b
\end{equation}
(for $N=2$ the vacuous middle inequalities play no role); see
Fig.~\ref{fig:graphs}.  Thus $L$ divides $[a,b]$ into
$2N-1$ subintervals:
\begin{equation}
\label{e:LNint}
    [s_1,s_2], \; [s_2,t_1], \; [t_1,s_3], \; [s_3,t_2], \;
    \ldots \; ,[s_N,t_{N-1}], \; [t_{N-1},t_N].
\end{equation}

\begin{defn}
\label{def:lace_presc}
Given a connected graph $\Gamma$ on $[a,b]$,
the following prescription associates to $\Gamma$ a unique
lace ${\sf L}_\Gamma \subset \Gamma$:  The lace ${\sf L}_\Gamma$ consists
of edges $s_1 t_1, s_2 t_2, \ldots$, with $t_1,s_1,t_2,s_2, \ldots$
determined, in that order, by
\[
    t_1 = \max \{t : at \in \Gamma \} ,  \;\;\;\; s_1 = a,
\]
\[
    t_{i+1} = \max \{ t: \exists s < t_i \mbox{ such that }
    st \in \Gamma  \}, \;\;\;\;
    s_{i+1} = \min \{ s : st_{i+1} \in \Gamma \}  .
\]
The procedure terminates when $t_{i+1}=b$.
Given a lace $L$, the set of all edges $st \not\in L$ such that
${\sf L}_{L\cup \{st\} } = L $ is denoted  $\Ccal (L)$. Edges in
$\Ccal (L)$ are said to be \emph{compatible}
with $L$. Fig.~\ref{fig:graphs} illustrates these
definitions.
\end{defn}

\begin{figure}[t]
\begin{center}
	\includegraphics[width=10cm, height=8 cm]{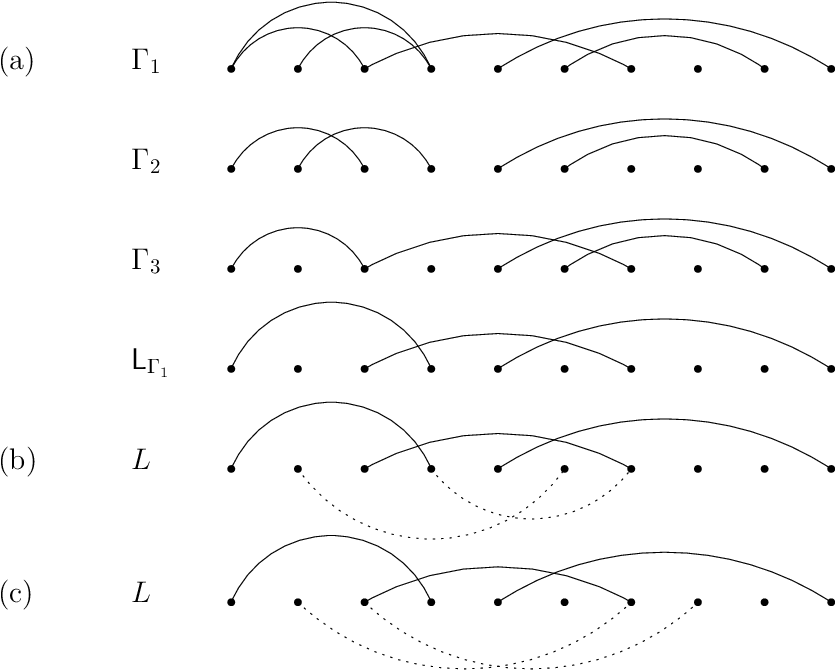}
\end{center}
\caption{ \label{fig:graphs}
(a) $\Gamma_1,\Gamma_2,\Gamma_3 $ are graphs on $[0,9]$. $\Gamma_1$ is connected while $\Gamma_2$ and $\Gamma_3$ are disconnected. ${\sf L}_{\Gamma_1}$ is the lace associated with $\Gamma_1$ in Definition~\ref{def:lace_presc}. (b) The dotted edges are compatible with the lace $L$. (c) The dotted edges are not compatible with the lace $L$.
}
\end{figure}

\subsection{Definition of $\Pi$}
\label{sec:Pi}

Suppose that to each
walk $\omega = (\omega(0), \omega(1), \ldots , \omega(n))$,
either
on $\Z^d$
or on $\T_r^d$,
and to
each pair $s,t \in \{ 0,1,\ldots ,n\}$ with $s<t$ we are given
a real number $\Ucal_{st}(\omega)$.
We are interested in the choice $\Ucal_{st} = \beta U_{st}$ with $U_{st}$ defined
in \eqref{e:Ustdef}.
Let $K [0,0] = 1$, and for $n>0$ let
\begin{align}
\label{e:Kdef}
    K [0,n] & = \prod_{0 \leq s<t \leq n}
    ( 1 + \Ucal_{st} ) ,
\\
\label{e:Jlacesum}
    J [0,n] &=
    \sum_{L \in \Lcal [0,n] } \prod_{st \in L} \Ucal_{st}
    \prod_{s't' \in \Ccal (L) } ( 1 + \Ucal_{s't'} ).
\end{align}
The dependence of $K[0,n]$ and $J[0,n]$ on the walk $\omega$ has been left implicit.
We define $J^{(N)}[0,n]$ to be the contribution to
\eqref{e:Jlacesum} from laces consisting of exactly $N$ bonds:
\begin{equation}
\label{e:JNdef}
    J^{(N)} [0,n] =
    \sum_{L \in \Lcal ^{(N)} [0,n] } \prod_{st \in L} \Ucal_{st}
    \prod_{s't' \in \Ccal (L) } ( 1 + \Ucal_{s't'} ) .
\end{equation}
Then
\begin{equation}
\label{e:Jdef}
    J [0,n] = \sum_{N=1}^{\infty} J^{(N)} [0,n].
\end{equation}

For $x \in \Z^d$, $N \ge 1$ and $n \ge 2$, let
\begin{align}
\label{e:PiNdef}
    \pi_n^{(N)} (x) & =  (-1)^N
    \sum_{\omega \in \Wcal_n( x)}
        J^{(N)} [0,n]
        \nnb
        &  =
        \sum_{\omega \in \Wcal_n( x)}
        \sum_{L \in \Lcal ^{(N)} [0,n] } \prod_{st \in L} (- \Ucal_{st})
    \prod_{s't' \in \Ccal (L) } ( 1 + \Ucal_{s't'} ).
\end{align}
The same definition applies for $x \in \T_r^d$ if we replace $\Wcal_n( x)$
by $\Wcal_n^\T( x)$.
The factor $(-1)^N$ on the right hand side of \eqref{e:PiNdef}
has been inserted so that
\begin{equation}
\label{e:PiNpos}
    \pi_m^{(N)}(x) \geq 0 \;\;\; \mbox{for all $N,n,x$}
\end{equation}
when $\Ucal_{st} \le 0$ for all $st$ as in our choice $\Ucal_{st}=\beta U_{st}$.
Then we define
\begin{equation}
\label{e:pidef}
    \pi_n (x) = \sum_{N=1}^\infty (-1)^N \pi_n^{(N)} (x)
    =
    \sum_{\omega \in \Wcal_n(0,x)}   J[0,n] .
\end{equation}
Let
\begin{equation}
\label{e:Pidef}
    \Pi_z (x) = \sum_{n=2}^\infty \pi_n(x) z^n = \sum_{N=1}^\infty (-1)^N \Pi_z^{(N)} (x),
\end{equation}
where
\begin{equation}
\label{e:PiNx}
    \Pi^{(N)}_z (x)
    =
    \sum_{n=2}^\infty \pi_{n}^{(N)} (x) z^m   .
\end{equation}

For $L \in \Lcal^{(N)}$, the product
$\prod_{st \in L} (- \Ucal_{st}) = \beta^N \prod_{st \in L} (- U_{st})$ in \eqref{e:PiNdef}
contains an explicit factor $\beta^N$, and the remaining factor
$\prod_{st \in L} (- U_{st})$ is either $0$ or $1$, with the value $1$ occurring if
and only if the walk $\omega$ obeys $\omega(s)=\omega(t)$ for each $s \neq t$.
Thus a walk $\omega$ contributing to $\pi^{(N)}_n(x)$ must have the self-intersections
depicted in Figure~\ref{fig:pi}.  These self-intersections divide $\omega$ into $2N-1$
subwalks.  The product over compatible edges in \eqref{e:PiNdef} enforces
weak self-avoidance within each subwalk, as well as providing mutual weak self-avoidance
between certain subwalks.

In particular, for $N=1$ and $\Ucal_{st}=\beta U_{st}$,
we have $\Pi_z^{(1)}(x)=0$ if $x \neq 0$ and also
\begin{equation}
    \Pi^{(1)}_z (0)
    =
        \sum_{n=2}^\infty z^n \sum_{\omega \in \Wcal_n( 0)}
         (- \Ucal_{0n})
    \prod_{s't' \in \Ccal (L) } ( 1 + \Ucal_{s't'} ).
\end{equation}
The factor $- \Ucal_{0n}$ is equal to $\beta$, and the product over compatible edges
would be completed to a product over all edges by the inclusion of a factor
$1+\Ucal_{0n}$, which is $1-\beta$ when $\omega(n)=0$.  This completion would be the relevant product for $G_z(0)$ (i.e., $K[0,n]$).
From this, we see that,
for $\beta \in [0,1)$,
\begin{equation}
\label{e:Pi1-bis}
    \Pi^{(1)}_z (0) = \frac{\beta}{1-\beta}(G_z(0)-1).
\end{equation}

\begin{figure}[h!]
\begin{center}
	\includegraphics[width=10cm, height=5 cm]{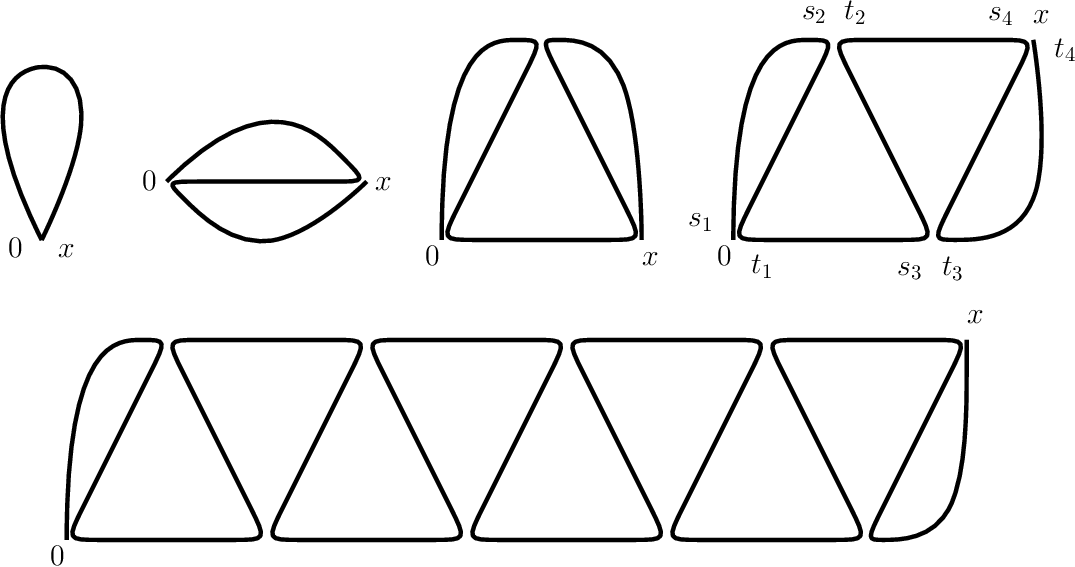}
\end{center}
\caption{Topology of walks contributing to $\pi^{(N)}_m(x)$ for $N=1,2,3,4$ and $10$.}
\label{fig:pi}
\end{figure}

By definition, with the choice $\Ucal_{st}=\beta U_{st}$,
\begin{equation}
    c_n(x) = \sum_{\omega \in \Wcal_n(x)} K[0,n],
    \qquad
    G_z(x) = \sum_{n=0}^\infty c_n(x) z^n ,
\end{equation}
and the same equations hold for $c_n^\T(x)$ and $G_z^\T(x)$ when $\Wcal_n(x)$ is replaced
by $\Wcal_n^\T(x)$.
The following proposition is a statement of the lace expansion.
It was originally proved in \cite{BS85}, see also \cite[(3.14)]{Slad06}.
Although those references are for $\Z^d$, the proof of the proposition applies verbatim
to the torus.

\begin{prop}
\label{prop:lace}
For $n \ge 1$ and for $x \in \Z^d$,
\begin{equation}
    c_n(x) = (2dD * c_{n-1})(x) + \sum_{m=2}^n (\pi_m * c_{n-m})(x),
\end{equation}
and hence
\begin{equation}
    G_z(x) = \delta_{0,x} + (2dD *G_z)(x) + (\Pi_z * G_z)(x).
\end{equation}
The same holds for $x \in \T_r^d$ with $c_n$ and $G_z$ replaced by $c_n^\T$ and $G_z^\T$,
with $\pi_n$ and $\Pi_z$ defined via walks in $\Wcal^\T_n(x)$, and with the torus convolution
$\star$ in place of $*$.
\end{prop}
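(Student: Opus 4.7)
The plan is to establish a per-walk combinatorial identity, sum it over walks, and pass to generating functions. For each fixed walk $\omega \in \Wcal_n(x)$, I would first aim to prove the identity
\begin{equation*}
    K[0,n] = K[1,n] + \sum_{m=2}^n J[0,m]\, K[m,n],
\end{equation*}
where $K[m,n] := \prod_{m \le s<t \le n}(1+\Ucal_{st})$; the $m=1$ contribution is omitted since $J[0,1]=\Ucal_{01}=0$ for nearest-neighbour walks. To establish it, I would expand $K[0,n]=\sum_\Gamma \prod_{st\in\Gamma}\Ucal_{st}$ as a sum over subgraphs $\Gamma$ of the complete graph on $\{0,\ldots,n\}$ and partition the $\Gamma$'s as follows: if no edge of $\Gamma$ is incident to $0$, then $\Gamma$ is a subgraph of $[1,n]$ and these contributions assemble to $K[1,n]$; otherwise, let $m=m(\Gamma)\ge 1$ be the largest index for which $\Gamma$ restricted to $[0,m]$ is connected on $[0,m]$ in the sense of Definition~\ref{def-graph}(ii). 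Maximality of $m$ rules out any edge $\{s,t\}\in\Gamma$ with $s<m<t$, since such a straddling edge would combine with the connected cover of $[0,m]$ to yield a connected cover of $[0,t]$. Consequently the edges of $\Gamma$ partition into those with $t\le m$, forming a connected graph $\Gamma_0$ on $[0,m]$, and those with $s\ge m$, forming an arbitrary subgraph on $[m,n]$, and the product factorises accordingly.

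Next I would identify the sum of $\prod_{st\in\Gamma_0}\Ucal_{st}$ over connected $\Gamma_0$ on $[0,m]$ with $J[0,m]$ via the standard lace construction: Definition~\ref{def:lace_presc} associates to each connected $\Gamma_0$ a unique lace $L=\mathsf{L}_{\Gamma_0}\subseteq\Gamma_0$, and the fibre $\{\Gamma_0:\mathsf{L}_{\Gamma_0}=L\}$ is exactly $\{L\cup S:S\subseteq\Ccal(L)\}$ by the definition of compatibility. Summing $\prod_{st\in\Gamma_0}\Ucal_{st}$ over this fibre gives $\prod_{st\in L}\Ucal_{st}\prod_{s't'\in\Ccal(L)}(1+\Ucal_{s't'})$, which matches the $L$-summand of \refeq{Jlacesum}, completing the per-walk identity.

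I would then sum over $\omega\in\Wcal_n(x)$. For the $K[1,n]$ term, conditioning on $\omega(1)=y$ and shifting the remaining walk to start at the origin produces $(2dD*c_{n-1})(x)$. For each $m\ge 2$, splitting $\omega$ at step $m$ into a prefix of length $m$ ending at $y$ and a shifted suffix of length $n-m$ from $0$ to $x-y$ converts $\sum_\omega J[0,m]K[m,n]$ into $\sum_y\pi_m(y)\,c_{n-m}(x-y)=(\pi_m*c_{n-m})(x)$, using the no-straddling property together with the translation invariance of the weights $\Ucal_{st}$ (which depend only on the increment $\omega(t)-\omega(s)$). Multiplying by $z^n$ and summing over $n\ge 0$ (with $n=0$ contributing $\delta_{0,x}$) will then convert the two sums into convolutions with $G_z$ and $\Pi_z$, yielding the generating function identity.

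The torus case requires no modification. The per-walk identity depends only on the time interval $[0,n]$ and the weights $\Ucal_{st}=\beta U_{st}$, and the subsequent convolution step uses only translation invariance, which holds equally on $\T_r^d$; thus replacing $\Wcal_n(x)$ by $\Wcal_n^\T(x)$ and $*$ by $\star$ carries the entire argument through verbatim. The main subtlety I would need to verify carefully is the no-straddling consequence of maximality of $m(\Gamma)$, since without it the split walk would fail to decouple $J[0,m]$ from $K[m,n]$ and the convolution structure that drives the whole lace expansion would collapse.
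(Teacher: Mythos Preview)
Your proposal is correct and is precisely the standard derivation of the lace expansion from \cite{BS85,Slad06}, which is exactly what the paper invokes: the paper does not give its own proof but simply cites those references and notes that the argument carries over verbatim to the torus. The only step you may wish to tighten when writing it out is the claim that the fibre $\{\Gamma_0 : \mathsf{L}_{\Gamma_0}=L\}$ equals $\{L\cup S : S\subseteq \Ccal(L)\}$, which relies on the (standard but not entirely immediate) monotonicity property that $\mathsf{L}_{\Gamma'}=L$ whenever $L\subseteq\Gamma'\subseteq\Gamma$ and $\mathsf{L}_\Gamma=L$.
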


\subsection{Diagrammatic estimates}

We define the multiplication and convolution operators
\begin{align}
\label{e:Mcal}
    (\Mcal_z f)(x) & =  G_z(x)f(x),
    \\
\label{e:Gcal}
    (\Gcal_z f)(x) & =  (G_z *f)(x),
\end{align}
for $f: \Z^d \to \R$ and $x \in \Z^d$.
A proof of the
diagrammatic estimate \eqref{e:PiNconv} can be found at \cite[(4.40)]{Slad06},
and the identity \eqref{e:Pi1} is derived above as \eqref{e:Pi1-bis}.

\begin{prop}
For $z \ge 0$,
\begin{equation}
\label{e:Pi1}
    \Pi^{(1)}_z(x)
    =
    \delta_{0,x} \frac{\beta }{1-\beta} (G_z(0) -1),
\end{equation}
and, for $N \ge 2$,
\begin{equation}
\label{e:PiNconv}
    \sum_{x\in \Z^d} \Pi^{(N)}_z(x)
    \le
    \beta^N
    \left[ (\Gcal_z \Mcal_z)^{N-1}  G_z\right] (0).
\end{equation}
\end{prop}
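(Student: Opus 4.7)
The plan is to treat $N=1$ and $N \ge 2$ separately.

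For $N = 1$, the unique lace in $\Lcal^{(1)}[0,n]$ is $L = \{0n\}$, and $\prod_{st \in L}(-\Ucal_{st}) = -\beta U_{0n}$ equals $\beta$ if $\omega(0) = \omega(n)$ and vanishes otherwise, in particular forcing $x = 0$. The compatible set $\Ccal(L)$ consists of every pair $\{s',t'\} \neq \{0,n\}$ with $0 \le s' < t' \le n$. Since $\omega(0) = \omega(n) = 0$ gives $1 + \beta U_{0n} = 1-\beta$,
\begin{equation*}
  \sum_{\omega \in \Wcal_n(0)} \prod_{\{s,t\} \in \Ccal(L)} (1 + \beta U_{st}) = \frac{c_n(0)}{1-\beta}.
\end{equation*}
Multiplication by $\beta$ gives $\pi^{(1)}_n(0) = \frac{\beta}{1-\beta} c_n(0)$, and summing $z^n$ over $n \ge 2$ (using $c_0(0)=1$ and $c_1(0)=0$) produces \eqref{e:Pi1}.

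For $N \ge 2$, the strategy is the classical lace-expansion diagrammatic bound. Given $L = \{s_1 t_1, \ldots, s_N t_N\} \in \Lcal^{(N)}[0,n]$, the product $\prod_{st \in L}(-\Ucal_{st}) = \beta^N$ precisely when $\omega(s_l) = \omega(t_l)$ for every $l$, so writing $u_l := \omega(s_l) = \omega(t_l)$ the lace fixes the walk at $u_1 = 0$ and $u_N = x$. The subinterval decomposition \eqref{e:LNint} then partitions $\omega$ into $2N-1$ subwalks joining adjacent vertices among $\{u_1, \ldots, u_N\}$. Each compatibility factor $(1 + \beta U_{s't'}) \in [1-\beta,1]$ is bounded trivially by $1$; summation over the length of each subwalk then replaces it by a factor of $G_z$ evaluated at the corresponding displacement.

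It remains to identify the resulting multiple sum over $x$ and the intermediate $u_2,\ldots,u_{N-1}$ of this product of $2N-1$ factors of $G_z$ with $\beta^N[(\Gcal_z \Mcal_z)^{N-1}G_z](0)$. Inspection of \eqref{e:LNint} shows that each intermediate vertex $u_l$ with $1 < l < N$ is the common endpoint of two adjacent subwalks (at times $s_l$ and $t_l$), giving a squared $G_z(\cdot)^2$ factor at the same argument -- a single application of $\Mcal_z$. Each remaining subwalk linking two distinct intermediate vertices supplies a convolution $\Gcal_z$, and the outermost convolution evaluated at $0$ accounts for pairing $u_N = x$ back to $u_1 = 0$; symmetry $G_z(-y) = G_z(y)$ aligns the displacements as needed. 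The main technical task is this combinatorial-to-operator bookkeeping, handled cleanly by induction on $N$ as in the classical treatment \cite{BS85} and \cite[Section~4.1]{Slad06}.
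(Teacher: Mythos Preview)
Your $N=1$ computation is correct and complete.

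For $N \ge 2$ there is a genuine gap in the step ``Each compatibility factor $(1 + \beta U_{s't'}) \in [1-\beta,1]$ is bounded trivially by $1$; summation over the length of each subwalk then replaces it by a factor of $G_z$.'' If you bound \emph{every} compatible factor by $1$, then what remains for each subwalk is an unweighted sum over simple random walks, and the generating function for each subwalk is the simple random walk Green function $C_z$, not the weakly self-avoiding two-point function $G_z$. The resulting inequality $\sum_x \Pi^{(N)}_z(x) \le \beta^N[(\Gcal'_z\Mcal'_z)^{N-1}C_z](0)$ (primes indicating $C_z$ in place of $G_z$) is a valid upper bound, but it is not the one stated, and it is strictly weaker in the relevant regime $z$ near $z_c > (2d)^{-1}$.

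The standard argument (to which the paper defers via \cite[(4.40)]{Slad06}) is more selective: the compatible set $\Ccal(L)$ contains every edge $s't'$ lying entirely within one of the $2N-1$ subintervals \eqref{e:LNint}, and \emph{only the compatible edges that span distinct subintervals} are bounded by $1$. The retained factors then reconstitute, for each subinterval $I$, the full product $K[I] = \prod_{s'<t' \in I}(1+\beta U_{s't'})$, so that summing over the length of the subwalk on $I$ yields $G_z$ at the appropriate displacement. Once this is fixed, your combinatorial identification of the resulting convolution/multiplication structure with $[(\Gcal_z\Mcal_z)^{N-1}G_z](0)$ is along the right lines; the change of variables needed is a short calculation (and the operator form is indeed set up precisely to encode this chain of alternating squared and convolved $G_z$ factors).
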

The following lemma (see \cite[Lemma~4.6]{Slad06})
gives a way to bound the right-hand side of \eqref{e:PiNconv}.

\begin{lemma}
\label{lem:convol_bound}
Given nonnegative
even functions $f_0,f_1,\ldots,f_{2M}$ on $\Z^d$, for $j=1,\ldots,M$ let $\Gcal_{j}$
and $\Mcal_{j}$ be respectively the operations of
convolution with $f_{2j}$ and
multiplication by $f_{2j-1}$.  Then for any
$k \in \{0,\ldots, 2M\}$,
\begin{equation}
\label{e:HMjboundaz}
    \|\Gcal_{M}\Mcal_{M}\cdots \Gcal_{1}\Mcal_{1} f_0\|_\infty
    \leq \|f_k\|_\infty \prod  \|f_{j}*f_{j'}\|_\infty ,
\end{equation}
where the product is over disjoint consecutive pairs
$jj'$ taken from the set $\{0,\ldots, 2M\}\setminus \{k\}$
(e.g., for $k=3$ and $M=3$, the product has factors with
$jj'$ equal to $01$, $24$, $56$).
The same holds for functions on the torus, with convolutions replaced by the torus convolution $\star$.
\end{lemma}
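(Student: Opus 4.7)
The plan is to unfold the operator composition into an explicit multi-sum, pull out $\|f_k\|_\infty$, and then perform the remaining $M$ summations in an order that groups the remaining $2M$ factors into $M$ adjacent pairs, each of which collapses into a convolution evaluated at some point and is bounded by the corresponding $\|f_j*f_{j'}\|_\infty$. Concretely, unfolding the definitions of $\Mcal_j$ and $\Gcal_j$ gives
\begin{equation}
    (\Gcal_M\Mcal_M\cdots \Gcal_1\Mcal_1 f_0)(x)
    = \sum_{y_1,\ldots,y_M\in\Z^d} f_0(y_1)f_1(y_1)\prod_{j=2}^{M}\bigl[f_{2j-2}(y_j-y_{j-1})f_{2j-1}(y_j)\bigr]f_{2M}(x-y_M),
\end{equation}
so that even-indexed functions sit at differences $y_{j+1}-y_j$ (with $y_{M+1}=x$) and odd-indexed functions, together with $f_0$, sit at single vertices. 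Taking $\sup_x$ and bounding $f_k$ by $\|f_k\|_\infty$ removes one factor from the sum; the remaining task is to pair up the $2M$ surviving factors so that each pair shares exactly one summation variable.

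For $k=2l$ with $0<l<M$ (the generic even case), removing the factor $f_{2l}(y_{l+1}-y_l)$ cleanly decouples the sum into a left half over $y_1,\ldots,y_l$ and a right half over $y_{l+1},\ldots,y_M$. On the right I would sum over $y_{l+1},y_{l+2},\ldots,y_M$ in order; at the $y_j$-step the pair $f_{2j-1}(y_j)f_{2j}(y_{j+1}-y_j)$ yields $(f_{2j-1}*f_{2j})(y_{j+1})\le\|f_{2j-1}*f_{2j}\|_\infty$ (with $y_{M+1}=x$). On the left I would sum over $y_l,y_{l-1},\ldots,y_2$ in order; at the $y_j$-step the pair $f_{2j-2}(y_j-y_{j-1})f_{2j-1}(y_j)$ is rewritten, using the evenness of $f_{2j-2}$, as $\sum_{y_j}f_{2j-2}(y_{j-1}-y_j)f_{2j-1}(y_j)=(f_{2j-2}*f_{2j-1})(y_{j-1})\le\|f_{2j-2}*f_{2j-1}\|_\infty$. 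The remaining $y_1$-sum is $\sum_{y_1}f_0(y_1)f_1(y_1)=(f_0*f_1)(0)\le\|f_0*f_1\|_\infty$, again via evenness. The boundary cases $k=0$ and $k=2M$ are the situation in which only one of the two halves is present.

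For $k=2l-1$ odd, removing $f_{2l-1}(y_l)$ leaves the two convolutive factors $f_{2l-2}(y_l-y_{l-1})$ and $f_{2l}(y_{l+1}-y_l)$ still coupled through $y_l$; summing over $y_l$ first yields $(f_{2l-2}*f_{2l})(y_{l+1}-y_{l-1})\le\|f_{2l-2}*f_{2l}\|_\infty$, which is exactly the ``bridging'' pair $(2l-2,2l)$ (e.g.\ the pair $(2,4)$ for $k=3,M=3$). After this step the two halves decouple and the remaining summations proceed exactly as in the even case. The main obstacle is not analytic but purely combinatorial bookkeeping: one must verify that the prescribed summation order really does exhaust both halves using consecutive pairs from $\{0,\ldots,2M\}\setminus\{k\}$ and that evenness is invoked in precisely the right places. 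The torus statement is identical, because every manipulation used is a change of summation variable that remains valid when $\Z^d$ is replaced by $\T_r^d$ and $*$ by $\star$.
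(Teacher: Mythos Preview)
Your proof is correct and is exactly the standard argument: unfold the operator string into an explicit multi-sum, extract $\|f_k\|_\infty$, and then peel off the remaining summation variables one at a time so that each step collapses a neighbouring pair of factors into a convolution bounded by its sup norm, with evenness used to rewrite differences where needed. The paper does not actually prove this lemma but simply refers to \cite[Lemma~4.6]{Slad06}, where precisely this unfolding-and-pairing argument is given; your write-up reproduces that proof faithfully, including the correct handling of the bridging pair $(2l-2,2l)$ in the odd case and the boundary cases $k=0,2M$.
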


By \eqref{e:PiNconv} and Lemma~\ref{lem:convol_bound}, for $N \ge 2$ we find that
for $z \in [0, z_c]$,
\begin{equation}
\label{e:PiNbub}
    \sum_{x\in \Z^d} \Pi^{(N)}_z(x)
    \le
    \beta^N \|G_z\|_\infty
    \|G_z * G_z\|_\infty^{N-1}
    ,
\end{equation}
and similarly for the torus.
The following proposition gives an estimate on the $z$-derivative of $\Pi_z$
(see \cite[(5.4.19)]{MS93} for an analogous statement).

\begin{prop}
\label{prop:Pidots}
Let $z \ge 0$.
For $N=1$ and $\beta \in (0,\frac 12]$,
\begin{equation}
\label{e:Pi1bd}
    \|  \partial_z   \Pi^{(1)}_z \|_1
    \le 2\beta \|\partial_z G_z\|_\infty,
\end{equation}
and for $N \ge 2$,
\begin{align}
\label{e:Pizder}
    \|  \partial_z    \Pi^{(N)}_z \|_1
    &\le
    (2N-1) \beta^N
    \|\partial_z  G_z\|_\infty
    \|G_z * G_z \|_\infty^{N-1}
    .
\end{align}
The same holds on the torus with convolution $\star$.
\end{prop}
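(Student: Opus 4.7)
The plan is to reduce each bound to differentiating a product of $2N-1$ copies of $G_z$ and then invoking Lemma~\ref{lem:convol_bound}. For $N=1$, the identity \eqref{e:Pi1} gives $\Pi^{(1)}_z(x)=\delta_{0,x}\frac{\beta}{1-\beta}(G_z(0)-1)$, so directly $\partial_z \Pi^{(1)}_z(x)=\delta_{0,x}\frac{\beta}{1-\beta}\partial_z G_z(0)$ and hence $\|\partial_z \Pi^{(1)}_z\|_1=\frac{\beta}{1-\beta}|\partial_z G_z(0)|\le 2\beta \|\partial_z G_z\|_\infty$ whenever $\beta\le \frac12$. This yields \eqref{e:Pi1bd}.

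For $N\ge 2$, I would return to the walk/lace representation that underlies the derivation of \eqref{e:PiNconv}. Each walk $\omega\in\Wcal_m(x)$ contributing to $\pi^{(N)}_m(x)$ is partitioned by its lace into the $2N-1$ subwalks listed in \eqref{e:LNint}, with lengths $m_1,\dots,m_{2N-1}$ summing to $m$. Differentiating the power series $\Pi^{(N)}_z(x)=\sum_{m\ge 2} \pi^{(N)}_m(x)z^m$ produces a factor $m=m_1+\cdots+m_{2N-1}$, which by linearity I split as a sum of $2N-1$ contributions, each of which ``marks'' exactly one subwalk $k$ and replaces the weight $z^{m_k}$ of that subwalk by $m_k z^{m_k-1}$. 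At the diagrammatic level this amounts to replacing the corresponding $G_z$ factor in the upper bound \eqref{e:PiNconv} by $\partial_z G_z$; since the coefficients of $\partial_z G_z$ are again nonnegative, all termwise inequalities used to derive \eqref{e:PiNconv} are preserved.

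The resulting bound on $\|\partial_z \Pi^{(N)}_z\|_1$ is then a sum of $2N-1$ diagrams, each a product of $2N-1$ nonnegative factors interlaced through convolutions ($\Gcal_z$) and pointwise multiplications ($\Mcal_z$), with exactly one factor equal to $\partial_z G_z$ and the other $2N-2$ equal to $G_z$. For each such diagram I would invoke Lemma~\ref{lem:convol_bound} with the index $k$ chosen to be the position of the differentiated factor, so that $\|f_k\|_\infty=\|\partial_z G_z\|_\infty$ is extracted directly, and the remaining $2N-2$ factors of $G_z$ are paired into $N-1$ bubble terms $\|G_z\ast G_z\|_\infty$. Multiplying by the overall $\beta^N$ and summing the $2N-1$ diagrams gives exactly \eqref{e:Pizder}.

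The main obstacle is carrying out the differentiation at the level of the walk/lace representation rather than on the already-bounded diagrammatic expression, in order to justify that the product rule produces the $(2N-1)$-fold sum without extraneous combinatorial factors or sign cancellations. This is ultimately a bookkeeping exercise that mirrors the derivation in \cite[Section~4]{Slad06}, using the nonnegativity of the coefficients of $G_z$ and $\partial_z G_z$ to preserve the monotone structure exploited there. The torus version is identical: the lace expansion, the diagrammatic bound \eqref{e:PiNconv}, and Lemma~\ref{lem:convol_bound} all hold verbatim on $\T_r^d$ (as noted after Proposition~\ref{prop:lace}), so replacing $\ast$ by $\star$ throughout the argument above yields the torus statement at no additional cost.
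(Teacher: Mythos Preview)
Your proposal is correct and follows essentially the same route as the paper: for $N=1$ you differentiate the explicit formula \eqref{e:Pi1}, and for $N\ge 2$ you write $m=\sum_{i=1}^{2N-1} m_i$ to split the derivative into $2N-1$ diagrams each with a single $\partial_z G_z$ line, then apply Lemma~\ref{lem:convol_bound} with $k$ at the differentiated line. The paper's proof is exactly this, stated more tersely.
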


\begin{proof}
The inequality \eqref{e:Pi1bd} is immediate since
$\Pi_z^{(1)}(x)= \delta_{0,x} \frac{\beta}{1-\beta}(G_z(0)-1)$ by \eqref{e:Pi1}.
For $N \ge 2$, by definition
\begin{equation}
    \partial_z  \Pi^{(N)}_z(x) = \sum_{n=2}^\infty n \pi_n^{(N)}(x) z^{n-1}.
\end{equation}
The diagrammatic representation of $\pi_n^{(N)}(x)$ has
$2N-1$ subwalks of total length $n$.  Let $n_i$ be the length of the $i^{\rm th}$
subwalk, so that
\begin{equation}
    n =  \sum_{i=1}^{2N-1} n_i.
\end{equation}
Use of this equality leads to a modification of \eqref{e:PiNconv} in which one
factor $\Gcal_z$ or $\Mcal_z$, or the factor $G_z$, has its function replaced by
$\partial_z G_z$.  Consequently, with \eqref{e:HMjboundaz}
and choosing the $i^{\rm th}$ line as the distinguished
line, we see that
\begin{equation}
\label{e:dzPibd}
    \partial_z  \Pi^{(N)}_z(x)
    \le
    (2N-1)  \beta^N \|\partial_z  G_z\|_\infty
    \|G_z * G_z \|_\infty^{N-1}
\end{equation}
 and the proof is complete.
\end{proof}

\section{Proof of Propositions~\ref{prop:Flacebds-new}--\ref{prop:Flbd-new}:
Susceptibility estimates}
\label{sec:lacepfs}

We now prove Propositions~\ref{prop:Flacebds-new}--\ref{prop:Flbd-new}.

\subsection{Proof of Proposition~\ref{prop:Flacebds-new}}
\label{sec:Flacebds}

For convenience we repeat the statement of Proposition~\ref{prop:Flacebds-new} here
as follows.

\begin{prop}[same as Proposition 2.1]
\label{prop:Flacebds-bis}
Let $d>4$ and let $\beta$ be sufficiently small.
The derivative $F'(z)$ obeys $F'(z)=-2d+O(\beta)$ for all $|z|\le z_c$, and $-z_cF'(z_c)=1+O(\beta)$.  Also, $F$ obeys the lower bound
\begin{align}
\label{e:Fder-bis}
    |F(z)| & \succ   |1-z/z_c|  \qquad (|z| \le z_c)
    .
\end{align}
\end{prop}

\begin{proof}
To abbreviate the notation, we write
\begin{equation}
\label{e:Pihatdef}
    \hat \Pi^{(N)}_z = \sum_{x\in\Z^d}\Pi^{(N)}_z(x),
    \qquad
    \hat \Pi_z = \sum_{x\in\Z^d}\Pi_z(x)
    = \sum_{N=1}^\infty (-1)^N \hat \Pi^{(N)}_z,
\end{equation}
with the hat notation consistent with a Fourier transform evaluated at $k=0$.
Since $|\hat\Pi^{(N)}_z| \le \hat\Pi^{(N)}_{|z|}$ and similarly for the derivative,
we can obtain bounds for complex $z$ from bounds with positive real $z$.

Using submultiplicativity we find that for $z \ge 0$
\begin{align}
    z\partial_z G_z(x)
    & =
    \sum_{n=1}^\infty nc_n(x) z^n
     = \sum_{n=1}^\infty \sum_{i=1}^n c_n(x) z^n
    \le \sum_{n=1}^\infty \sum_{i=1}^n (c_i*c_{n-i})(x) z^iz^{n-i}
    \nnb &
    =
    \sum_{y\in\Z^d} \sum_{i=1}^\infty c_i(x-y)z^i \sum_{k=0}^\infty c_k(y) z^k
    \le (G_z*G_z)(x)
    .
\label{e:nGG}
\end{align}
We combine \eqref{e:PiNbub} and Proposition~\ref{prop:Pidots}
(together with \eqref{e:nGG}) with
the fact that $\|G_z\|_\infty$ and $\|G_z*G_z\|_\infty$ are uniformly bounded
(recall \eqref{e:bubblebd})
to see that
there is a positive constant $C$ such that
\begin{align}
\label{e:bd_Pi_hat_N_and_prime}
    |\hat\Pi_z^{(N)}| & \le (C\beta)^N,
    \qquad
    |\partial_z \hat\Pi_z^{(N)}|  \le (C\beta)^N ,
\end{align}
uniformly in complex $z$ with $|z| \le z_c$.
The constant $C$ has been chosen large enough
to absorb a prefactor $2N-1$ in the derivative bound;
this manoeuver will be repeated later to avoid writing unimportant polynomial factors in $N$.
A small detail is that in the upper bound from  \eqref{e:dzPibd}
it is $\partial_z G_z$
that appears and not $z\partial_z G_z$ as in \eqref{e:nGG} and this could in principle
be problematic when $z$ gets close to $0$. But in fact this is unimportant because
for $z \le \frac{1}{4d}$ we can use
\begin{equation}
    \partial_z G_z(x) = \sum_{n=1}^\infty nc_n(x)z^{n-1}
    \le \sum_{n=1}^\infty n(2d)^n z^{n-1}
\end{equation}
which remains bounded for $z \in [0, \frac{1}{4d}]$.  For
$z \in [\frac{1}{4d},z_c]$ no difficulty is posed
by the occurrence of $\partial_zG_z$ rather than $z\partial_z G_z$ in the bound \eqref{e:nGG}.
Thus we can choose $\beta$ to be sufficiently small that $\hat\Pi_{z}$ and $\partial_z \hat\Pi_z$
are each $O(\beta)$ uniformly in the complex disk $|z| \le z_c$.

Since $F(z)=1-2dz-\hat\Pi_z$, the derivative is
\begin{equation}
    F'(z) = -2d -\partial_z \hat\Pi_z = -2d +O(\beta).
\end{equation}
Also, since $\chi(z_c)=\infty$, we have $F(z_c)=1-2dz_c-\hat\Pi_{z_c}=0$, which
implies that $2dz_c=1+O(\beta)$ and hence that $-z_cF'(z_c) = 2dz_c + z_c\partial_z \hat\Pi_{z_c} =1+O(\beta)$.
It also gives
\begin{equation}
    F(z) = F(z) - F(z_c) = 2d(z_c-z) +[\hat\Pi_{z_c}- \hat\Pi_z],
\end{equation}
which by the Fundamental Theorem of Calculus applied to $f(t)=\hat\Pi_{(1-t)z+tz_c}$ yields
\begin{equation}
    F(z) = (z_c-z)\Big[ 2d + \int_0^1 \partial_z \hat\Pi_z\big|_{(1-t)z+tz_c} \D t   \Big]  .
\end{equation}
The lower bound in the disk $|z|\le z_c$ follows from the $O(\beta)$
bound on the derivative of
$\hat\Pi_z$ in the disk.
\end{proof}

\subsection{Proof of Proposition~\ref{prop:Flbd-new}}
\label{sec:Flbd}

For convenience we restate Proposition~\ref{prop:Flbd-new} as follows.

\begin{prop}[same as Proposition~\ref{prop:Flbd-new}]
\label{prop:Flbd-bis}
Let $d>4$ and let $\beta$ be sufficiently small.  Let $\zeta=z_c(1-V^{-1/2})$ and
$U=\{z\in \C : |z|<\zeta\}$.
Then $\varphi'(z) = -2d +O(\beta)$ for all $|z|\le\zeta$,
and $\varphi$ obeys the lower bound
\begin{equation}
\label{e:varphilb-bis}
    |\varphi(z)| \succ
    |1-z/\zeta|
    \qquad (|z| \le \zeta)
    .
\end{equation}
\end{prop}

\begin{proof}
We first observe that
\begin{equation}
    \varphi(z) = 1-2dz - \hat\Pi^\T_z,
\end{equation}
from which we conclude that
\begin{equation}
    \varphi(z) = \varphi(\zeta) + 2d(\zeta-z) + [\hat\Pi^\T_\zeta - \hat\Pi^\T_z].
\end{equation}
As in the proof of Proposition~\ref{prop:Flacebds-bis}, we control $\hat \Pi_z^{\T,(N)}$ (and thus $\hat \Pi_z^{\T}$) using Proposition \ref{prop:Pidots}.
To make use of these diagrammatic estimates,
we first observe that $\chi(\zeta) \prec
V^{1/2}$ by \eqref{e:chiZdasy}, and hence by the upper bound of
Theorem~\ref{thm:plateau} (for the two-point function $G_z^\T$) and by \eqref{e:bubblebd},
\eqref{e:GGam} and
\eqref{e:GGstar} (for the torus convolution $G_z^\T\star G_z^\T$), together with the
uniform bounds on the $\Z^d$ two-point function $G_z$ and bubble $\bubble_z$ at $\zeta < z_c$, we find that the inequalities
\begin{align}
    &\|G_{z}^\T\|_\infty
    \prec \|G_z\|_\infty + \frac{ \chi(|z|) }{V}
    \prec 1 + \frac{V^{1/2}}{V}
    \prec 1, \\
\label{e:GG_norm_tor_bd}
   &\|G_{z}^\T \star G_{z}^\T\|_\infty
   \le \|\Gamma_{z}  \star \Gamma_{z} \|_\infty
   \prec 1+ \frac{\chi(|z|)^2}{V} \leq 1+ \frac{V}{V} \prec 1
\end{align}
hold uniformly in $z \in U$.
Consequently, by the torus versions of \eqref{e:PiNbub} and Proposition~\ref{prop:Pidots},
there is a constant $C$ (independent of $\beta,z,r$) such that, uniformly in $z\in U$,
\begin{align}
    |\hat\Pi_{z}^{\T,(N)}| & \le (C\beta)^N,
    \\
\label{e:PiTder}
    |\partial_z \hat\Pi_{z}^{\T,(N)}| & \le (C\beta)^N.
\end{align}
By summing the above two inequalities over $N\geq 1$, we see that both $\hat\Pi_z^{\T}$ and its $z$-derivative are $O(\beta)$ uniformly in $z \in U$.
In particular, this implies that $\varphi'(z)=-2d-\partial_z\hat\Pi_z^{\T} = -2d+O(\beta)$
as claimed.

Finally, for the lower bound on $|\varphi(z)|$ we
apply the Fundamental Theorem of
Calculus to the first order, to
the function $\hat\Pi^\T_\zeta - \hat\Pi^\T_z$,
to see that
\begin{align}
    \varphi(z)
    & =
    \varphi(\zeta) + 2d(\zeta-z) + O(\beta|\zeta-z|).
\end{align}
Since $z \in U$, we know that ${\rm Re}\,(\zeta-z ) \ge 0$ and hence, since $\varphi(\zeta)>0$,
\begin{align}
    \Big| \varphi(\zeta) + 2d(\zeta-z) + O(\beta|\zeta-z|) \Big|
    & \geq
    \Big| \varphi(\zeta) + 2d(\zeta-z)\Big| - O(\beta|\zeta-z|)
    \nnb & \succ
    |1-z/\zeta| - O(\beta |1-z/\zeta|) \succ |1-z/\zeta|
\end{align}
for $\beta$ small enough.
This completes the proof.
\end{proof}

\begin{rk}
\label{rk:beta-s}
Our restriction to $|z|\le \zeta = z_c(1-V^{-1/2})$ is present in order to achieve
$V^{-1}\chi(\zeta)^2 \prec 1$ in \eqref{e:GG_norm_tor_bd}.  The true limitation of our
method
is that $\beta \|G_\zeta^\T \star G_\zeta^\T\|_\infty$ must be sufficiently small
that the sum over $N$ converges and the sum remains small.  If we had instead
defined $\zeta = z_c(1-sV^{-1/2})$ with some small positive $s$, then
it would be necessary to take $\beta$ small depending on $s$.  The fact that
our method requires this, in spite of the fact that we believe that Conjecture~\ref{conj:window}
remains true for all real $s$, is an indication that a new idea is needed in order
to analyse $c_n^\T$ and $\chi^\T(z)$
(with fixed positive $\beta$)
for $n$ above $V^{1/2}$ or for $z$ of
the form $z = z_c(1-sV^{-1/2})$ for \emph{all} real $s$.
\end{rk}

\section{Proof of Propositions~\ref{prop:AD-new} and \ref{prop:AD-new-prime}: Susceptibility comparison}
\label{sec:AD}

In this section, we prove the bounds on  $\Delta$ and $\Delta'$
stated in Propositions~\ref{prop:AD-new} and \ref{prop:AD-new-prime}.
The bound on $\Delta$ is needed for all our main results, whereas the bound
on its derivative is needed only for the proof of Theorem~\ref{thm:EL} for the expected length.

\subsection{Start of proof}

Recall that $\Delta(z)$ is defined for $z\in \C$ with $|z|\le z_c$
by
\begin{equation}
    \Delta(z) = \varphi(z)-F(z)= \hat\Pi_z- \hat\Pi^\T_z,
\end{equation}
where as in \eqref{e:Pihatdef} we write
\begin{equation}
    \hat\Pi_z = \sum_{x\in \Z^d}\Pi_z(x),
    \qquad
    \hat\Pi^\T_z = \sum_{x\in \T_r^d}\Pi^\T_z(x).
\end{equation}
We also are interested in the derivative $\Delta'(z) = \varphi'(z)-F'(z)$.
Recall that the closed disk $U$ is defined by
\begin{equation}
\label{e:Udef}
    U = \{z \in \C :\, |z| \le  z_c(1-V^{-1/2}) \}.
\end{equation}
For convenience, we combine Propositions~\ref{prop:AD-new} and \ref{prop:AD-new-prime}
into the following proposition.

\begin{prop}[same as Propositions~\ref{prop:AD-new} and \ref{prop:AD-new-prime}]
\label{prop:AD-bis}
Let $d>4$ and let $\beta$ be sufficiently small.  For $z\in U$,
\begin{align}
\label{e:Deltabd}
    |\Delta(z)|
    & \prec \beta  \frac{r^2+\chi(|z|)}{V}  ,
    \qquad
    |z\Delta'(z)|
     \prec \beta   \frac{\chi(|z|)(r^2+\chi(|z|))}{V}
    .
\end{align}
\end{prop}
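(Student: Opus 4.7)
The plan is to compare the infinite- and finite-volume lace expansions term by term, writing
$$\Delta(z) = \hat\Pi_z - \hat\Pi^\T_z = \sum_{N=1}^\infty (-1)^N\bigl[\hat\Pi^{(N)}_z - \hat\Pi^{\T,(N)}_z\bigr],$$
and showing that each term is bounded by $N(C\beta)^N\cdot \beta(r^2+\chi(|z|))/V$ (with an extra factor of $\chi(|z|)$ for the derivative bound), so that the geometric series in $N$ converges for $\beta$ small. Everything is controlled uniformly for $z\in U$ because, by the choice $\zeta = z_c(1-V^{-1/2})$ and \eqref{e:chiZdasy}, one has $\chi(\zeta)^2/V \prec 1$, and hence all torus convolutions that appear are of the same order as the $\Z^d$ ones.

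The $N=1$ case is immediate from \eqref{e:Pi1} and its torus analogue, which reduce the task to bounding $\beta|G_z(0)-G_z^\T(0)|$. Since $G_z^\T(x)\le \Gamma_z(x)$ by \eqref{e:GGam}, separating the $u=0$ term from $\Gamma_z(0)$ gives $G_z^\T(0)-G_z(0) \le \sum_{u\neq 0}G_z(ru)$, which by Theorem~\ref{thm:mr} and Lemma~\ref{lem:unifmassint} is at most $Cr^2/V$; the reverse direction follows from the plateau lower bound of Theorem~\ref{thm:plateau} in the near-critical regime, and is trivial outside it since both quantities differ from $1$ by $O(\beta)$.

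For $N\ge 2$, the key is to exploit the lift bijection \eqref{e:liftdef}: for a torus walk $\omega$ with lift $\bar\omega\in\Wcal_n$, the torus intersection indicator decomposes as $U^\T_{st}(\omega) = U_{st}(\bar\omega) + W_{st}(\bar\omega)$, where $W_{st}(\bar\omega) = -\sum_{u\neq 0}\indicthat{\bar\omega(s)-\bar\omega(t)=ru}$ records wrap-around intersections. Expanding every factor of $U^\T$ in the lace formula for $J^{\T,(N)}$ (on both lace edges and compatible edges) into $U+W$ and subtracting the $W$-free term, which reconstructs $\hat\Pi^{(N)}_z$, writes $\hat\Pi^{\T,(N)}_z - \hat\Pi^{(N)}_z$ as a sum over nonempty subsets of edges carrying a $W$-factor. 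Each resulting diagram can be bounded using \eqref{e:HMjboundaz}: the $W$-line replaces one $G_z$-factor by $\sum_{u\neq 0}G_z(\,\cdot\, +ru)$, which by Lemma~\ref{lem:unifmassint} together with Lemma~\ref{lem:GT3}\eqref{e:plateau-ub} is bounded in $\ell^\infty$ by $C(r^2+\chi(|z|))/V$, while the remaining lines are dominated by $\Gamma_z$ and controlled by the torus convolution bounds \eqref{e:GGstar}--\eqref{e:GGGstar}. Combining with $O(N)$ combinatorial factors from the subset expansion and Proposition~\ref{prop:Pidots}-type diagrammatic estimates gives the termwise bound, and summing on $N$ yields the estimate on $|\Delta(z)|$.

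For $|z\Delta'(z)|$, differentiating each diagram as in Proposition~\ref{prop:Pidots} distributes a factor of $m$ over the $2N-1$ subwalks, replacing the $G_z$ on the chosen subwalk by $z\partial_z G_z$, which by submultiplicativity \eqref{e:nGG} is bounded by a $G_z\!*\!G_z$ convolution; placing this extra derivative on the wrap line gives $\sum_{u\neq 0}(G_z*G_z)(\,\cdot\, +ru) \prec \chi(|z|)^2/V$ via \eqref{e:GGstar}, producing the extra $\chi(|z|)$ factor claimed in \eqref{e:Delp}. The main obstacle is the bookkeeping in the $N\ge 2$ step: one must track the expansion of $U^\T$ on \emph{both} lace and compatible edges, verify that the $W$-free contribution truly reproduces $\hat\Pi^{(N)}_z$ with matching compatible-edge constraints (and not some auxiliary $\Z^d$ quantity), and resum over the wrap vectors $u\neq 0$ \emph{before} invoking Lemma~\ref{lem:conv-nu}, so that what would otherwise be an unwieldy sum of $(r|u|)^{-(d-2)}$ tails collapses into the single global $(r^2+\chi(|z|))/V$ gain on which the whole estimate rests.
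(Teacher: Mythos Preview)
Your overall plan---decompose $\Delta$ into its $N$-loop pieces, use the lift bijection to rewrite the torus lace expansion over $\Z^d$-walks, split $U^\T_{st}=U_{st}+W_{st}$ with $W_{st}$ the wrap indicator, and extract a single factor of $(r^2+\chi)/V$ from the wrap constraint---matches the paper's strategy. The paper organises the same idea as $\Delta^{(N)}=T^{(N)}-S^{(N)}$, where $T^{(N)}$ collects the effect of replacing $U$ by $U^\T$ on lace edges and $S^{(N)}$ collects the effect on compatible edges. Your treatment of the lace-edge wrap (reinterpreting the loop as a torus loop that must wrap, then bounding one long subwalk by $r^{-(d-2)}+\chi/V$) is essentially the paper's $T^{(N)}$ argument.

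The gap is in the compatible-edge case. You assert that ``the $W$-line replaces one $G_z$-factor by $\sum_{u\neq 0}G_z(\,\cdot\,+ru)$'', but a compatible edge $ab\in\Ccal(L)$ carrying $W_{ab}$ does not live on any line of the standard $N$-loop diagram: it inserts two \emph{new} vertices $\omega(a),\omega(b)$ subject to $\omega(a)-\omega(b)\in r\Z^d\setminus\{0\}$, and these vertices may land on the same subwalk, on different subwalks of the same loop, or on subwalks of adjacent loops. Only in the first of these configurations does the diagram contain a single $G_z(ru)$ factor that directly yields the $r^{-(d-2)}$ gain you describe. In the other configurations no individual diagram line is forced to be long, and the naive ``pick the long line and bound it in $\ell^\infty$'' does not close: the wrap constraint couples two vertices sitting on different lines, and extracting the $V^{-1}$ gain while retaining summability over the remaining vertices requires an honest Cauchy--Schwarz step. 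Concretely, the paper introduces
\[
\Psi_z(x)=\Big(\sum_{u\neq 0}(G_z^2*G_z^2)(x+ru)\Big)^{1/2},\qquad
\widetilde\Psi_z(x)=\Big(\sum_{u\neq 0}\big((G_z*G_z)^2*G_z^2\big)(x+ru)\Big)^{1/2},
\]
shows $\Psi_z(0),\|\Psi_zG_z\|_2\prec r^{-(d-2)}$ and $\widetilde\Psi_z(0),\|\widetilde\Psi_zG_z\|_2\prec \chi/V^{1/2}$, and uses these to replace the two offending lines by one $\Psi_z$-line plus one constant line before invoking Lemma~\ref{lem:convol_bound}. This is what you call ``bookkeeping'' in your final paragraph, but it is the analytical heart of the $S^{(N)}$ bound and cannot be absorbed into the plateau estimates \eqref{e:GGstar}--\eqref{e:GGGstar} alone. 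Your claim that ``the remaining lines are dominated by $\Gamma_z$'' is also off: after the additive expansion the surviving weights are genuinely $\Z^d$ two-point functions $G_z$, not $\Gamma_z$, and the torus convolution lemmas do not apply to them directly.
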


To prove Proposition~\ref{prop:AD-bis}, we must compare the $\Z^d$ and torus susceptibilities
as well as their derivatives.
This is equivalent to a comparison of
$\Pi_{z}$ and $\Pi_{z}^\T$,
as well as a comparison
of their derivatives.  No such direct comparison of torus and $\Z^d$ lace
expansions has been performed
previously in the literature, and the proof requires new ideas.  The torus plateau
upper bounds from Theorem~\ref{thm:plateau}, and its consequences
for the bubble and triangle in Lemmas~\ref{lem:G3}--\ref{lem:GT3}, are indispensable for this.

Let $\pi_r : \Z^d \to \T_r^d$ be the canonical projection onto the torus.
To begin the comparison, for a walk
$\omega$ on $\Z^d$ let
\begin{align}
\label{e:Utorus}
	U^\T_{st}(\omega) &=
    \begin{cases}
        -1 & (\pi_r\omega(s)=\pi_r\omega(t))
        \\
        0 & (\text{otherwise})
    \end{cases}
\end{align}
and for $\omega$ of length $n$ let $K^\T[0,n] = \prod_{0 \leq s < t \leq n}(1+\beta U_{st}^{\T})$.
Via the lift defined in \eqref{e:liftdef},
a torus walk to $x$ lifts bijectively to a $\Z^d$-walk ending at a point of the
form $x+ru$ for some $u \in \Z^d$.
We can therefore rewrite the torus two-point function
as a sum over walks on $\Z^d$, as
\begin{align}
\label{e:GTKT}
    G_z^\T(x)
    & =
    \sum_{n=0}^\infty z^n \sum_{\omega \in \Wcal^\T_n(x)}
    K[0,n]
     =
    \sum_{n=0}^\infty z^n \sum_{u \in \Z^d} \sum_{\omega \in \Wcal_n(x+ru)}
    K^\T[0,n]  \qquad (x \in \T_r^d),
\end{align}
where as usual on the right-hand side we identify
$x$ with a point in $[-r/2,r/2)^d \cap \Z^d$.
Similarly, by the definition of $\Pi^{\T,(N)}_z$ in \eqref{e:PiNx},
with $\Wcal_n^\T = \cup_{x\in \T_r^d} \Wcal_n^\T(x)$,
\begin{equation}
\label{e:PiTdef}
    \hat\Pi_z^{\T,(N)}
    =
     \sum_{n=2}^\infty z^n \sum_{\omega \in \Wcal_n^\T }
        \sum_{L \in \Lcal ^{(N)} [0,n] } \prod_{st \in L} (- \beta U_{st})
    \prod_{s't' \in \Ccal (L) } ( 1 + \beta U_{s't'} )
     .
\end{equation}
Because of the one-to-one correspondence between torus and $\Z^d$ walks, there is
an equivalent formulation involving walks on $\Z^d$ rather than on the torus, namely
\begin{align}
\label{e:PiTdef-Zd}
    \hat\Pi_z^{\T,(N)}
    &=
     \sum_{n=2}^\infty z^n
     \sum_{\omega \in \Wcal_n}
        \sum_{L \in \Lcal ^{(N)} [0,n] } \prod_{st \in L} (-\beta U^\T_{st})
    \prod_{s't' \in \Ccal (L) } ( 1 + \beta U^\T_{s't'} )
    \nnb &
    =
    \sum_{n=2}^\infty z^n \sum_{\omega \in \Wcal_n} J^{\T,(N)}[0,n]
     ,
\end{align}
where the last equality defines $J^{\T,(N)}[0,n]$.
In contrast to \eqref{e:PiTdef} which involves walks on the torus with $U_{st}$
taking effect for
intersections of the torus walk,
the formula \eqref{e:PiTdef-Zd} involves walks on
$\Z^d$ with
the interaction $U^\T_{st}$ taking
effect when the walk visits points with the same torus projection.

By definition,
\begin{align}
\label{e:DeltaT}
    \Delta (z) & =
    \sum_{N=1}^\infty (-1)^{N+1}\Delta^{(N)}(z),
\end{align}
where $\Delta^{(N)} (z)$ is defined by
\begin{equation}
\label{e:def-Delta_N}
	\Delta^{(N)} (z) = \hat\Pi^{\T,(N)}_z - \hat\Pi^{(N)}_z.
\end{equation}
The use of \eqref{e:PiTdef-Zd} rather than \eqref{e:PiTdef} facilitates  the comparison
on $\Z^d$ and the torus, as required to prove Proposition~\ref{prop:AD-bis}.
Indeed, by \eqref{e:def-Delta_N} and \eqref{e:PiNdef},
\begin{align}
\label{e:Delta_N_reform}
	\Delta^{(N)}(z)
    &=
    \sum_{n=2}^\infty z^n \sum_{w\in \Wcal_n}(J^{\T,(N)}[0,n]-J^{(N)}[0,n]).
\end{align}
The following proposition is sufficient to prove Proposition~\ref{prop:AD-bis}.

\begin{prop}
\label{prop:Deltabds}
Let $d>4$ and let $\beta$ be sufficiently small.
Let $z\in U$ and $N \ge 1$.
There exists a positive constant $C$ independent of $\beta,N,r$ and $z$
such that
\begin{align}
    |\Delta^{(N)}(z)| & \leq
    (C\beta)^N \frac{r^2+\chi(|z|)}{V},
    \qquad
    |z\Delta'^{(N)}(z)| \leq
    (C\beta)^N \frac{\chi(|z|)(r^2+\chi(|z|))}{V}.
\end{align}
\end{prop}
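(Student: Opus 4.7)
The approach is to compare the diagrammatic expressions for $\hat\Pi^{(N)}_z$ and $\hat\Pi^{T,(N)}_z$ line-by-line, using the common $\Z^d$-walk representation in \eqref{e:PiTdef-Zd}, where the only difference between the two expressions is the replacement $U_{st}\to U^T_{st}$. The key pointwise estimate underlying the proof is
\begin{equation}
\label{e:planbd1}
    \sum_{u\neq 0} G_z(x+ru) \;\leq\; C\,\frac{r^2+\chi(|z|)}{V}
    \qquad (x\in \T_r^d,\ z\in U).
\end{equation}
This follows from Theorem~\ref{thm:mr} combined with Lemma~\ref{lem:unifmassint} (with $a=2$), which together yield an upper bound of order $r^{-d}m(z)^{-2}$: using $m(z)^{-2} \leq C\chi(|z|)$ from \eqref{e:mchi} when $m(z)r\leq 1$ produces the $\chi(|z|)/V$ contribution, while the bound $m(z)^{-2}\leq r^2$ available when $m(z)r>1$ produces the $r^2/V$ contribution.

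The first step of the proof is a telescoping expansion of the difference $\Delta^{(N)}(z)=\hat\Pi^{T,(N)}_z-\hat\Pi^{(N)}_z$ over the $2N-1$ subwalks of the diagram in Fig.~\ref{fig:pi}. Writing $G^T_z(x)=G_z(x)+(G^T_z(x)-G_z(x))$ and bounding $G^T_z\leq \Gamma_z$ via \eqref{e:GGam}, one decomposes $\Delta^{(N)}(z)=\sum_{k=1}^{2N-1}D^{(N)}_k(z)$, where in $D^{(N)}_k$ subwalk $k$ carries the excess factor $G^T_z-G_z$, subwalks $1,\dots,k-1$ carry torus weights, and subwalks $k+1,\dots,2N-1$ carry $\Z^d$ weights. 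Estimate \eqref{e:planbd1} bounds the excess factor pointwise by $C(r^2+\chi(|z|))/V$, and for $z\in U$ the remaining factors are uniformly bounded since $\|G_z\|_\infty,\|\Gamma_z\|_\infty\prec 1$, $\|G_z*G_z\|_\infty\prec 1$, and $\|\Gamma_z\star\Gamma_z\|_\infty\prec 1+\chi(|z|)^2/V\prec 1$ by Lemma~\ref{lem:GT3} and \eqref{e:GG_norm_tor_bd}. Applying Lemma~\ref{lem:convol_bound} to each $D^{(N)}_k$ with the excess factor playing the role of $f_k$, and summing over $k$ (absorbing the factor $2N-1$ into $(C\beta)^N$ by enlarging $C$), gives the first bound of the proposition.

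For the derivative $z\Delta'^{(N)}(z)$, differentiation distributes over the $2N-1$ subwalks as in the proof of Proposition~\ref{prop:Pidots}, using $z\partial_zG_z\leq G_z*G_z$ and $z\partial_zG^T_z\leq G^T_z\star G^T_z$. If the differentiation falls on a non-excess subwalk, the bound inherits the estimate from the previous paragraph together with a uniformly bounded bubble factor and therefore has the form $(C\beta)^N(r^2+\chi(|z|))/V$, which is dominated by the target estimate since $\chi(|z|)\succ 1$. If the differentiation falls on the excess subwalk, the excess factor becomes $z\partial_z(G^T_z-G_z)(x)\leq \sum_{u\neq 0}\bubble_z(x+ru)$; applying Lemma~\ref{lem:unifmassint} with $a=4$ and the bubble decay from \eqref{e:bubblebd} yields an upper bound of order $r^{-d}m(z)^{-4}$, and writing $m^{-4}=m^{-2}\cdot m^{-2}$, using $m^{-2}\leq C\chi(|z|)$ for one factor and either $m^{-2}\leq r^2$ (when $m(z)r>1$) or $m^{-2}\leq C\chi(|z|)$ (when $m(z)r\leq 1$) for the other, produces the uniform bound $C\chi(|z|)(r^2+\chi(|z|))/V$. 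Summing all contributions gives the second bound.

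The principal technical obstacle lies in the careful organisation of the telescoping so that each ``hybrid'' diagram $D^{(N)}_k$ admits an application of Lemma~\ref{lem:convol_bound} with uniformly bounded convolution norms. This is delicate because the lace expansion involves not only the $N$ lace edges---for which the replacement $U\to U^T$ introduces the wrap contribution isolated in \eqref{e:planbd1}---but also the compatibility edges in $\Ccal(L)$, whose replacement modifies the weak self-avoidance constraints within each subwalk. One must verify that these modified constraints can be absorbed into $G_z$ or $\Gamma_z$ in a way compatible with line-by-line replacement, so that the standard diagrammatic machinery of Section~\ref{sec:lace} applies uniformly to the mixed $\Z^d$/torus diagrams produced by the telescoping.
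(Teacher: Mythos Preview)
Your telescoping proposal has a fundamental gap. The quantities $\hat\Pi^{(N)}_z$ and $\hat\Pi^{\T,(N)}_z$ are \emph{not} equal to products or convolutions of two-point functions; the relation \eqref{e:PiNconv} is only an upper bound, obtained by discarding the mutual avoidance encoded in the compatible-edge product over $\Ccal(L)$. Consequently there is no identity of the form ``$\hat\Pi^{\T,(N)}_z =$ diagram with $G^\T_z$ weights on the $2N-1$ subwalks'', so writing $G^\T_z = G_z + (G^\T_z - G_z)$ and telescoping subwalk-by-subwalk does not give a decomposition of $\Delta^{(N)}(z)$. A difference of two upper bounds does not control the difference of the underlying quantities, and your hybrid diagrams $D^{(N)}_k$ are not upper bounds on anything related to $\Delta^{(N)}$.

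The paper instead works directly with the walk-level identity \eqref{e:Delta_N_reform} and splits $J^{\T,(N)}-J^{(N)}$ algebraically as $Q^{(N)}-P^{(N)}$ (see \eqref{e:def-P_def}--\eqref{e:def-Q_def}): $Q^{(N)}$ isolates the discrepancy in the lace-edge product $\prod_{st\in L}(-U_{st})$ and $P^{(N)}$ the discrepancy in the compatible-edge product $\prod_{s't'\in\Ccal(L)}(1+\beta U_{s't'})$. The $Q^{(N)}$ contribution (giving $T^{(N)}$) forces one lace loop to wrap around the torus, which is close in spirit to your \eqref{e:planbd1}. But the $P^{(N)}$ contribution (giving $S^{(N)}$) does something different: it places two \emph{extra} vertices $x$ and $x+ru$ with $u\neq 0$ on the standard $N$-loop diagram, rather than replacing one line's weight. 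This larger diagram requires the Case~1--3 analysis in Section~\ref{sec:AD}, including repeated Cauchy--Schwarz rearrangements, and is where the factor $r^2/V$ genuinely arises. Your final paragraph correctly flags the compatible-edge issue as the crux, but it is not a technicality to be verified: it is precisely what prevents the subwalk telescoping from being well-defined and forces the $P^{(N)}/Q^{(N)}$ route.
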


\begin{proof}[Proof of Proposition~\ref{prop:AD-bis}]
By choosing $\beta$ small enough
to give convergence of the geometric series in \eqref{e:DeltaT}, we obtain
\begin{equation}
    |\Delta(z)|
    \prec \beta \frac{r^2+\chi(|z|)}{V}
    ,
    \qquad
    |z\Delta'(z)|
    \prec \beta \frac{\chi(|z|)(r^2+\chi(|z|))}{V}
    ,
\end{equation}
as desired.
\end{proof}

\begin{rk}
\label{rk:zeta}
The restriction that $z \in U$ ensures that $V^{-1}\chi(|z|)^2$ is at most of order $O(1)$. This observation will be used repeatedly in what follows to disregard factors of the form $(1+V^{-1}\chi(|z|)^2)$.
\end{rk}

For convenient reference, we assemble the following definitions here.
For a  walk $\omega$ in $\Z^d$ and any edge $st$ with $s<t\leq|\omega|$, we recall
the definitions of $U_{st},U^\T_{st}$
from \eqref{e:Ustdef} and \eqref{e:Utorus},
and also define $U^+_{st}$ as follows:
\begin{align}
\label{e:def-U_T_and_+}
	U_{st}(w) &=
    \begin{cases}
        -1 & (\omega(s)=\omega(t))
        \\
        0 & (\text{otherwise}),
    \end{cases}
\\
	U^\T_{st}(w) &=
    \begin{cases}
        -1 & (\pi_r\omega(s)=\pi_r\omega(t))
        \\
        0 & (\text{otherwise}),
    \end{cases}
\\
\label{e:Uplusdef}
	U^+_{st}(w) &=     \begin{cases}
        -1 & (\pi_r\omega(s)=\pi_r\omega(t)\text{ and } \omega(s)\neq \omega(t))
        \\
        0 & (\text{otherwise}).
    \end{cases}
\end{align}
By definition,
\begin{equation}
\label{e:UT+}
	(1+\beta U^\T_{st}) = (1+\beta U_{st})(1+\beta U^+_{st}).
\end{equation}
With $K^\#[0,n] = \prod_{0 \le s < t \le n}(1+\beta U^{\#}_{st})$ for $\#$ any of $\T, +$, or nothing,
we therefore have
\begin{equation}
\label{e:KKK}
	K^\T[0,n] = K[0,n]  K^+[0,n] .
\end{equation}

\subsection{$1$-loop diagram}

We first prove the case $N=1$ of Proposition~\ref{prop:Deltabds}.
By \eqref{e:Pi1},
\begin{equation}
	\hat\Pi^{(1)}_z  = \frac{\beta}{1-\beta}(G_z(0)-1).
\end{equation}
It is the same for
$\hat\Pi^{\T,(1)}_z$ and thus
\begin{equation}
\label{e:Delta_1}
	\Delta^{(1)}(z) = \frac{\beta}{1-\beta}(G^\T_z(0)-G_z(0)).
\end{equation}

\begin{prop}
\label{prop:Delta_1_bound}
Let $d>4$ and let $\beta$ be sufficiently small.
There is a constant $C$ independent of $\beta,r, z$ such that for all $z \in \C$
with $|z|\le z_c$
\begin{equation}
	 |\Delta^{(1)}(z)| \le C\beta \frac{\chi(|z|)}{V}.
\end{equation}
\end{prop}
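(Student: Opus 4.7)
The plan is to reduce the bound on $\Delta^{(1)}(z)$ directly to the plateau upper bound for the torus two-point function at the origin, which is already established in Lemma~\ref{lem:GT3}.

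First, I would start from the identity \eqref{e:Delta_1}, which gives
\begin{equation}
\Delta^{(1)}(z) = \frac{\beta}{1-\beta}\bigl(G^\T_z(0) - G_z(0)\bigr),
\end{equation}
so it suffices to show that $|G^\T_z(0) - G_z(0)| \prec \chi(|z|)/V$ uniformly in $|z|\le z_c$. The factor $\beta/(1-\beta)$ contributes the desired $\beta$ (after using $\beta \le \tfrac12$, say).

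Next, I would pass from complex to real $z$ using positivity of coefficients. Since the lift map $\Wcal_n^\T(0) \to \cup_{u\in \Z^d} \Wcal_n(ru)$ is a bijection and the lift of a torus walk has no more self-intersections than the walk itself, each $\Z^d$ walk to the origin contributes to $c_n^\T(0)$ (with $u=0$), so $c_n^\T(0) - c_n(0) \ge 0$ for every $n$. Consequently
\begin{equation}
\bigl| G^\T_z(0) - G_z(0) \bigr|
\le \sum_{n=0}^\infty (c_n^\T(0)-c_n(0))\,|z|^n
= G^\T_{|z|}(0) - G_{|z|}(0),
\end{equation}
which reduces the problem to a bound for real arguments $|z|\le z_c$.

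Finally, I would apply the plateau upper bound from Lemma~\ref{lem:GT3}: combining \eqref{e:GGam} with \eqref{e:plateau-ub} gives $G^\T_{|z|}(0) \le \Gamma_{|z|}(0) \le G_{|z|}(0) + C\chi(|z|)/V$ for all $|z|\le z_c$. Hence
\begin{equation}
\bigl| G^\T_z(0) - G_z(0) \bigr| \le C\,\frac{\chi(|z|)}{V},
\end{equation}
and substituting this into the expression for $\Delta^{(1)}(z)$ completes the proof. No obstacle is expected here: all the work is done by the torus plateau, and the passage from complex to real $z$ is immediate from the nonnegativity of $c_n^\T(0)-c_n(0)$. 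The bound is trivially valid at $|z|=z_c$ where $\chi(|z|)=\infty$.
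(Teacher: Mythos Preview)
There is a gap in your positivity step. You argue that $c_n^\T(0) - c_n(0) \ge 0$ because each $\Z^d$ walk to the origin contributes to $c_n^\T(0)$. But a walk $\bar\omega \in \Wcal_n(0)$, when projected to the torus, can acquire \emph{additional} self-intersections (distinct $\Z^d$ points may have the same torus projection), so its torus weight $K^\T[0,n](\bar\omega)$ is \emph{at most} its $\Z^d$ weight $K[0,n](\bar\omega)$. Thus the $u=0$ portion of $c_n^\T(0)$ under the lift decomposition is $\le c_n(0)$, not $\ge$, and the nonnegative $u\neq 0$ contributions need not compensate. In fact one can check in small examples (e.g.\ $d=1$, $r=3$, $n=6$, $\beta$ not too small) that $c_n^\T(0) < c_n(0)$ occurs, so the coefficientwise inequality you use is false in general and cannot carry the reduction to real $z$.

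The paper avoids any such positivity claim: it writes the lift decomposition $G_z^\T(0) = \sum_{u} \sum_{\omega\in\Wcal(ru)} z^{|\omega|}K^\T[0,|\omega|]$ and bounds the $u\neq 0$ part termwise by $\sum_{u\neq 0} G_{|z|}(ru) \prec \chi(|z|)/V$ via \eqref{e:plateau-ub}. Your route can be repaired in the same spirit by splitting $G_z^\T(0)-G_z(0)$ into the nonnegative $u\neq 0$ piece and the nonpositive defect $\sum_{\omega\in\Wcal(0)} z^{|\omega|}(K^\T - K)$, then bounding the latter in absolute value using $K-K^\T = K(1-K^+) \le \beta K\sum_{a<b}(-U_{ab}^+)$ exactly as in the proof of Proposition~\ref{prop:Delta_1_prime_bound-upper}; this yields an extra diagram again controlled by $\beta\chi(|z|)/V$. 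Once both pieces are bounded at $|z|$, the final appeal to the plateau estimate \eqref{e:plateau-ub} goes through as you intended.
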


\begin{proof}
We use \eqref{e:Delta_1} to see that it is enough to control $G_z^\T(0) - G_z(0)$,
which by \eqref{e:GTKT} and \eqref{e:KKK} is equal to
\begin{align}
	G_z^\T(0) - G_z(0)
    & =
    \sum_{n=0}^\infty z^n \sum_{u \in \Z^d} \sum_{\omega \in \Wcal_n(ru)}   K^\T[0,n]
    -
    \sum_{n=0}^\infty z^n \sum_{\omega \in \Wcal_n(0)}   K[0,n]
    \nnb & =
    \sum_{u \neq 0} \sum_{n=0}^\infty z^n \sum_{\omega \in \Wcal_n(ru)}   K^\T[0,n]
    -
    \sum_{n=0}^\infty z^n \sum_{\omega \in \Wcal_n(0)}   K[0,n](1-K^+[0,n]).
\label{e:Delta1pf}
\end{align}
The absolute value of the first term can be bounded using the inequality $K^\T[0,n] \leq K[0,n]$
together with \eqref{e:plateau-ub} (recall the definition
of $\Gamma$ in \eqref{e:def-gamma}) as
\begin{align}
\label{e:GGamG}
     \sum_{u \neq 0}G_{|z|}(ru)
     =
     \Gamma_{|z|}(0)-G_{|z|}(0)
     \prec \frac{\chi(|z|)}{V}
\end{align}
which is sufficient.
To bound $1-K^{+}[0,n]$,
we use the fact that for any discrete set $A$ and any choice of $u_a \in [0,1]$,
\begin{equation}
\label{e:bd_prod_to_sum}
	1 - \prod_{a\in A}(1-u_a) \leq \sum_{a\in A}u_a .
\end{equation}
The absolute value of the last term in \eqref{e:Delta1pf} is then bounded by
\begin{align}
    \beta\sum_{n=0}^\infty |z|^n \sum_{0 \le s < t \le n}
    \sum_{\omega \in \Wcal_n(0)}
    K[0,n] |U_{st}^+|.
\end{align}
For a nonzero contribution, the factor $U_{st}^+$ forces $\omega$ to visit two distinct
points $\omega(s)=y$ and $\omega(t)=y+ru$ with the same torus projection.
Thus, by relaxing the interaction between the three subwalks corresponding
to the intervals $[0,s]$, $[s,t]$, $[t,n]$, the above sum is bounded above
by
\begin{align}
    \beta\sum_{y\in\Z^d}\sum_{u \neq 0} G_{|z|}(y)G_{|z|}(ru)G_{|z|}(y+ru)
    & =
    \beta\sum_{u \neq 0} G_{|z|}(ru) \bubble_{|z|}(ru).
\end{align}
By Lemma~\ref{lem:G3} the bubble $\bubble_{|z|}(ru)$ is uniformly bounded,
and, as observed in \eqref{e:GGamG},
$\sum_{u \neq 0} G_{|z|}(ru)\prec \chi(|z|)/V$.  This completes the proof.
\end{proof}

Before considering the derivative $\Delta'^{(1)}(z)$, we first prove the following lemma.

\begin{lemma}
\label{lem:S_prime_case_1_as_psi}
Let $d>4$ and $\beta$ sufficiently small. Then for all $z \in [0,z_c]$,
	\begin{equation}
\label{e:S_prime_case_1_as_psi}
	\sup_{x \in \Z^d}\sum_{u \neq 0}(G_z(ru){\sf T}_z(x+ru)+\bubble_z(ru)\bubble_z(x+ru))
    \prec \frac{\chi(z)^2}{V}.
\end{equation}
\end{lemma}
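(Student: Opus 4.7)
The plan is to handle the two sums in \eqref{e:S_prime_case_1_as_psi} separately. In each case I will pull out an $\ell^\infty$ bound on one factor, sum the remaining factor over the translates $ru$ ($u\neq 0$) using Lemma~\ref{lem:unifmassint}, and finally use the near-critical bound \eqref{e:mchi} to convert the resulting $m(z)^{-4}$ into $\chi(z)^2$ on the right-hand side.

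For the bubble-bubble term, the bound \eqref{e:bubblebd} yields the uniform estimate $\|\bubble_z\|_\infty \prec 1$ (evaluate at $x=0$ and use $d>4$). Bounding $\bubble_z(x+ru)\le \|\bubble_z\|_\infty$ and applying Lemma~\ref{lem:unifmassint} to the remaining factor with $a=4$ and $\nu = c_1 m(z)$ gives
\begin{equation*}
\sum_{u\neq 0}\bubble_z(ru)\bubble_z(x+ru) \prec \sum_{u\neq 0}\frac{e^{-c_1 m(z) r\|u\|_\infty}}{(r\|u\|_\infty)^{d-4}} \prec r^{-d}m(z)^{-4} \prec \frac{\chi(z)^2}{V},
\end{equation*}
uniformly in $x\in\Z^d$, with the final inequality from \eqref{e:mchi}.

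For the $G\cdot{\sf T}$ term I apply the analogous strategy with the roles swapped: bound ${\sf T}_z(x+ru)\le \|{\sf T}_z\|_\infty$ and apply Lemma~\ref{lem:unifmassint} (with $a=2$, $\nu=c_1m(z)$) to $G_z(ru)$, yielding $\sum_{u\neq 0}G_z(ru)\prec r^{-d}m(z)^{-2}$. The triangle diagram bound \eqref{e:GGG} then splits by dimension. For $d>6$, $\|{\sf T}_z\|_\infty \prec 1$ and the whole sum is $\prec V^{-1}m(z)^{-2}\prec V^{-1}\chi(z)\le V^{-1}\chi(z)^2$. For $4<d<6$, $\|{\sf T}_z\|_\infty \prec m(z)^{-(6-d)}$, giving an upper bound $V^{-1}m(z)^{-(8-d)}$; since $8-d<4$, one has $m(z)^{-(8-d)} \prec 1+m(z)^{-4}\prec \chi(z)^2$. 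For $d=6$, evaluating the Lemma~\ref{lem:G3} bound at any $y$ with $c_1 m(z)\veee{y}\le 1$ shows $|\log(c_1 m(z)\veee{y})|\le |\log(c_1 m(z))|$, so $\|{\sf T}_z\|_\infty \prec 1+|\log m(z)|$, hence an upper bound $V^{-1}m(z)^{-2}(1+|\log m(z)|)\prec V^{-1}m(z)^{-4}\prec V^{-1}\chi(z)^2$ via $|\log m|\prec m^{-2}$ as $m\to 0$.

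The main obstacle is the borderline case $d=6$, where ${\sf T}_z$ diverges logarithmically as $z\to z_c$; the logarithmic loss is absorbed by the gap between $m^{-2}$ and $m^{-4}$, but one must check this absorption is uniform. A minor technical point is that for $m(z)$ bounded away from zero (so \eqref{e:mchi} is a weak bound) the exponential factor $e^{-c_1 m(z) r/4}$ from Lemma~\ref{lem:unifmassint} decays faster than any power of $V=r^d$ and hence dominates all competing polynomial-in-$m^{-1}$ contributions, so this regime is handled trivially and the full range $z\in[0,z_c]$ is covered.
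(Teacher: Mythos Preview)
Your proof is correct and follows essentially the same approach as the paper: pull an $\ell^\infty$ bound off one factor in each term and sum the remaining factor over $u\neq 0$ via Lemma~\ref{lem:unifmassint} and \eqref{e:mchi}. The only difference is cosmetic: for the $G\cdot{\sf T}$ term the paper uses the cruder uniform bound ${\sf T}_z(y)=(G_z*\bubble_z)(y)\le \|G_z\|_1\|\bubble_z\|_\infty \prec \chi(z)$, which avoids your dimension-by-dimension case analysis of $\|{\sf T}_z\|_\infty$ from \eqref{e:GGG} but yields the same estimate.
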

\begin{proof}[Proof of Lemma \ref{lem:S_prime_case_1_as_psi}]
We estimate crudely as follows.
By \eqref{e:GGG}, ${\sf T}_z (x+ru) \prec m(z)^{-2} \prec \chi(z)$.
Also, since $\bubble_z(x+ru)\prec 1$, and with \eqref{e:plateau-ub} and
\eqref{e:Bbd}, we see that
\begin{align}
    \sup_{x \in \Z^d}\sum_{u \neq 0}(G_z(ru){\sf T}_z(x+ru)+\bubble_z(ru)\bubble_z(x+ru))
    & \prec
    \chi(z)\sum_{u\neq 0}G_z(ru) +\sum_{u\neq 0}\bubble_z(ru)
     \prec \frac{\chi(z)^2}{V}.
\end{align}
This completes the proof.
\end{proof}

\begin{prop}
\label{prop:Delta_1_prime_bound-upper}
Let $d>4$ and let $\beta$ be sufficiently small.
There is a constant $C$ independent of $\beta,r,z$ such that for all $z\in\C$
with $|z|\le z_c$,
\begin{equation}
    |z\Delta'^{(1)}(z)| \leq C\beta \frac{\chi(|z|)(r^2+\chi(|z|))}{V}.
\end{equation}
\end{prop}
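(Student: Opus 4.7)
The plan is to differentiate \eqref{e:Delta_1} and work coefficient by coefficient. Since
\begin{equation}
    |z\Delta'^{(1)}(z)| \le \frac{\beta}{1-\beta}\sum_{n=0}^\infty n |z|^n \,|c_n^\T(0)-c_n(0)|,
\end{equation}
I may work at the positive real argument $|z|$ throughout. Lifting a torus walk from $0$ to $0$ to a $\Z^d$-walk from $0$ to some $ru$ with $u\in\Z^d$, and using $K^\T = K\cdot K^+$ with $K^+[0,n]\in[0,1]$, produces the two-piece bound
\begin{equation}
    |c_n^\T(0) - c_n(0)| \;\le\; \sum_{u \neq 0} \sum_{\omega \in \Wcal_n(ru)} K[0,n] \;+\; \sum_{\omega \in \Wcal_n(0)} K[0,n]\bigl(1 - K^+[0,n]\bigr).
\end{equation}

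For the first piece, multiplying by $n|z|^n$ and summing over $n$ yields $\sum_{u \neq 0} |z|\partial_{|z|} G_{|z|}(ru) \leq \sum_{u \neq 0}\bubble_{|z|}(ru)$ by the standard splitting bound \eqref{e:nGG}, and \eqref{e:Bbd} in Lemma~\ref{lem:GT3} immediately gives $\sum_{u\neq 0}\bubble_{|z|}(ru) \prec \chi(|z|)^2/V$.

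For the second piece, the key input is that $K^+[0,n]$ is a product of factors in $[1-\beta,1]$, so the union-bound $1 - K^+[0,n] \le \beta \sum_{0 \le s<t \le n} \indicthat{\pi_r\omega(s)=\pi_r\omega(t),\, \omega(s)\neq\omega(t)}$ applies. This marks a pair $(s,t)$ with $\omega(t)-\omega(s) = ru$ for some $u\neq 0$, and one cuts $\omega:0\to 0$ at times $s$ and $t$ into three subwalks of lengths $\ell_1 = s$, $\ell_2 = t-s$, $\ell_3 = n-t$ traversing $0\to y\to y+ru\to 0$ for some $y\in\Z^d$. Submultiplicativity $K[0,n]\le K[0,s]K[s,t]K[t,n]$ factorises the weight, and distributing $n = \ell_1+\ell_2+\ell_3$ converts exactly one of the three $G_{|z|}$-factors into a $\bubble_{|z|}$-factor via \eqref{e:nGG}. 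After summing out $y$, the three resulting contributions are $\beta\sum_{u\neq 0}G_{|z|}(ru){\sf T}_{|z|}(ru)$ for $\ell_1$ and, symmetrically, $\ell_3$, and $\beta\sum_{u\neq 0}\bubble_{|z|}(ru)^2$ for $\ell_2$. Lemma~\ref{lem:S_prime_case_1_as_psi} evaluated at $x=0$ then bounds their sum by $\beta\,\chi(|z|)^2/V$.

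Combining the two pieces and absorbing the prefactor $\beta/(1-\beta)$ gives $|z\Delta'^{(1)}(z)|\prec \beta\,\chi(|z|)^2/V$, and since $\chi(|z|)^2/V \le \chi(|z|)(r^2+\chi(|z|))/V$ the stated bound follows. The main obstacle is the three-way case analysis for the diagonal ($u=0$) correction; the deeper plateau input from Theorem~\ref{thm:plateau} is already packaged inside Lemmas~\ref{lem:GT3} and \ref{lem:S_prime_case_1_as_psi}, so the work here is essentially a controlled lace-type decomposition of the interaction mismatch $K - K^\T$.
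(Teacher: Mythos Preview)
Your proof is correct, and the second piece (the $u=0$ correction $K(1-K^+)$) is handled exactly as in the paper: same cut at $s,t$, same distribution of the factor $n$ over the three subwalks, same reduction to $\sum_{u\neq 0}(2G_{|z|}(ru){\sf T}_{|z|}(ru)+\bubble_{|z|}(ru)^2)$, and the same appeal to Lemma~\ref{lem:S_prime_case_1_as_psi}.

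The first piece is where you diverge from the paper. The paper keeps the torus weight $K^\T$ and reinterprets $\sum_{u\neq 0}\sum_n nz^n\sum_{\omega\in\Wcal_n(ru)}K^\T[0,n]$ as a sum over torus walks from $0$ to $0$ that wrap around, then argues that one of the two subwalks in the resulting torus bubble has displacement at least $r/2$, pulls out the sup-norm of the plateau bound on that long line, and gets the contribution $\beta\chi(r^2+\chi)/V$. You instead simply drop the extra interaction via $K^\T\le K$, stay on $\Z^d$, and use $|z|\partial_{|z|}G_{|z|}(ru)\le \bubble_{|z|}(ru)$ together with $\sum_{u\neq 0}\bubble_{|z|}(ru)\prec \chi(|z|)^2/V$ from \eqref{e:Bbd}. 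Your route is shorter, avoids the torus plateau for this term, and actually yields the sharper bound $\beta\chi(|z|)^2/V$ (no $r^2$ contribution). The paper's $r^2$ term in the statement of the proposition is therefore not forced by this particular argument; it is present only to match the form needed downstream, and your final inequality $\chi^2/V\le \chi(r^2+\chi)/V$ recovers the stated claim.
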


\begin{proof}
From \eqref{e:Delta_1} we see that we need to estimate
$zG_z^{'\T}(0) - zG_z'(0)$,  which by differentiation of \eqref{e:Delta1pf} is
\begin{align}
    zG_z^{'\T}(0) - zG_z'(0)
	 & =
     \sum_{v \neq 0}\sum_{n=2}^\infty nz^n\sum_{\omega \in \Wcal_n(rv)}K^\T[0,n]
    - \sum_{n=2}^\infty nz^n\sum_{\omega \in \Wcal_n(0)}K[0,n](1-K^{+}[0,n])
    .
\label{e:GTonZd}
\end{align}
Since we use absolute bounds, we restrict in the rest of the proof to
real $z \in [0,z_c]$.

For the first term on the right-hand side of
\eqref{e:GTonZd}, we use the crude bound $K^\T \leq K$ and then
\eqref{e:nGG} to see that it is bounded above by
\begin{equation}
\label{e:Delta_prime_one_proof_1}
    \sum_{v \neq 0}z\partial_z G_z(rv) \le (G_z*G_z)(rv)=
    \sum_{x \in \T_r^d} \sum_{u \in \Z^d}\sum_{v\neq 0} G_{z}(x+ru)G_{z}(x+r(u-v)),
\end{equation}
where we have written the summation index implied by the convolution as
$ x+ru$ with $x \in \T_r^d$ and $u \in \Z^d$.
We consider separately the cases $u = 0$ and $u \neq 0$. For $u = 0$, we obtain
\begin{align}
	\sum_{x \in \T_r^d}\sum_{v\neq 0} G_{z}(x)G_{z}(x-rv)
	&= \sum_{x \in \T_r^d}G_z(x)(\Gamma_z(x)-G_z(x))
    \prec \frac{\chi(z)}{V}\sum_{x \in \T_r^d}G_z(x) \leq  \frac{\chi(z)^2}{V},
\end{align}
where we used \eqref{e:plateau-ub} for the first inequality. For $u \neq 0$, we obtain
\begin{align}
\sum_{x \in \T_r^d} \sum_{u \neq 0}\sum_{v\neq 0} G_{z}(x+r\chem{v})G_{z}(x+r(u-v))
    &\le
    \sum_{x \in \T_r^d} \sum_{u \neq 0} G_{z}(x+ur)
    \Big( G_{z}(x) + \sum_{w\neq 0}G_{z}(x+wr) \Big)
    \nnb
    &=
    \sum_{x \in \T_r^d} (\Gamma_{z}(x)- G_z(x)) \Gamma_z(x)
    \nnb &
    \prec
    \sum_{x \in \T_r^d}
    \frac{\chi(z)}{V}
    \Big(\frac{1}{r^{d-2}} +  \frac{\chi(z)}{V}\Big)
    \nnb & =
    \frac{\chi(z)}{V} ( r^2 +\chi(z)),
\end{align}
where we used \eqref{e:plateau-ub} in the third line.
This gives the desired estimate for the first term on the right-hand side of
\eqref{e:GTonZd}.

For the second sum in \eqref{e:GTonZd} we define $\widetilde \psi$ by
\begin{equation}
	\widetilde \psi(z)
    = \sum_{n=0}^\infty nz^{n} \sum_{\omega \in \Wcal_n(0)} K[0,n](1- K^{+}[0,n]).
\end{equation}
For the factor $1-K^{+}[0,n]$ we use \eqref{e:bd_prod_to_sum}.
Also, we write the factor $n$ as $n=\sum_{v\in\Z^d} \sum_{k=1}^n \1_{w(k) = v}$.
This leads to
\begin{align}
	\widetilde \psi (z)
	&\leq \beta
    \sum_{v \in \Z^d}\sum_{n = 0}^\infty\sum_{0\leq s<t \leq n}\sum_{k=1}^n z^n
    \sum_{\omega \in \Wcal_n(0)}|U^+_{st}|\,K[0,n]\1_{w(k) = v}.
\end{align}
By splitting $\omega$ into $4$ subwalks between the time intervals
separated by $s,t,k$, by neglecting interactions between these subwalks,
and by conditioning on $\omega(s)=w$ and $\omega(t) = w+ru$ ($u \neq 0$), we obtain
\begin{align}
\label{e:widetilde_psi}
	\widetilde{\psi}(z)
	&\leq \beta\sum_{u \neq 0}\sum_{v,w \in \Z^d}\Big(2G_{z}(v)G_{z}(w-v)G_{z}(ru)G_{z}(w+ru)
    \nnb
	&\qquad \qquad \qquad +\; G_{z}(w)G_{z}(v-w)G_{z}(w+ru-v)G_{z}(w+ru)\Big)
    \nnb
	&=\beta\sum_{u \neq 0}(2G_{z}(ru){\sf T}_{z}(ru)+\bubble_{z}(ru)^2) .
\end{align}
The first and second terms come respectively from the first and second diagrams in
Figure~\ref{fig:one_loop_prime} with $x=0$.
Since $k$ belongs to $[0,s]$, $[s,t]$ or $[t,n]$ there are in principle three different diagrams but two of them are identical by symmetry, hence the factor $2$ inside \eqref{e:widetilde_psi}.
Lemma~\ref{lem:S_prime_case_1_as_psi} provides
the required upper bound $V^{-1}\chi(z)^2$ for \eqref{e:widetilde_psi}.
This completes the proof.
\end{proof}

\begin{figure}[t]
\centering
	\includegraphics[scale = 0.9]{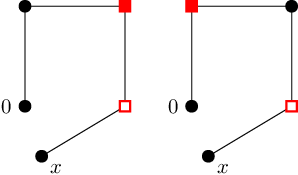}
	\caption{
Diagrammatic representation of the bound used on $\widetilde
\psi(p)$. There are three diagrams for the three possible
configurations with two of the diagrams equal by symmetry.
Each diagram is a path from $0$ to $x$
with three extra vertices,
two of which (square and box)
have the same torus projection but are distinct.
For \eqref{e:widetilde_psi} we are interested only in the case $x=0$. }
	\label{fig:one_loop_prime}
\end{figure}

\subsection{Higher-loop diagrams}

To deal with $N \ge 2$, we define $P^{(N)}[0,n]$ and $Q^{(N)} [0,n]$ by
\begin{align}
\label{e:def-P_def}
	P^{(N)}[0,n] &=
	\sum_{L \in \Lcal^{(N)}[0,n]}\prod_{st \in L}(-U_{st})\Big(\prod_{s't' \in \Ccal(L)}(1+\beta U_{s't'})-\prod_{s't' \in \Ccal(L)}(1+\beta U^\T_{s't'})\Big),
\\
\label{e:def-Q_def}
	Q^{(N)}[0,n] &=
	\sum_{L \in \Lcal^{(N)}[0,n]}\Big(\prod_{st \in L}(-U^\T_{st})- \prod_{st \in L}(-U_{st})\Big)\prod_{s't' \in \Ccal(L)}(1+\beta U^\T_{s't'}).
\end{align}
By the formula for $\Delta^{(N)}(z)$ in \eqref{e:def-Delta_N},
\begin{equation}
	\Delta^{(N)}(z)
    =
    \beta^N \sum_{n=2}^\infty z^n
    \sum_{\omega \in \Wcal_n}\Big(Q^{(N)}[0,n]-P^{(N)}[0,n]\Big).
\end{equation}
It follows from  $U_{st}^\T \leq U_{st} \leq 0$
that both $P^{(N)}[0,n]$ and $Q^{(N)}[0,n]$ are nonnegative.
Then we define
\begin{align}
	S^{(N)}(z) &= \beta^N\sum_{n = 2}^\infty z^n \sum_{\omega \in \Wcal_n}P^{(N)}[0,n],\\
	T^{(N)}(z) &= \beta^N\sum_{n = 2}^\infty z^n \sum_{\omega \in \Wcal_n}Q^{(N)}[0,n],
\label{e:TNdef}
\end{align}
so that
\begin{align}
\label{e:DeltaNTS}
	\Delta^{(N)}(z) &= T^{(N)}(z)-S^{(N)}(z).
\end{align}

Similarly,
\begin{align}
\label{e:DeltaT_prime}
    \Delta'(z)
    &= \sum_{N=1}^\infty (-1)^{N+1}\Delta'^{(N)}(z),
\end{align}
with
\begin{align}
\label{e:DeltaNTS_prime}
	\Delta'^{(N)}(z) &= T'^{(N)}(z)-S'^{(N)}(z),
\end{align}
where
\begin{align}
	zS'^{(N)}(z) &= \beta^N\sum_{n = 2}^\infty nz^n \sum_{\omega \in \Wcal_n}P^{(N)}[0,n],\\
	zT'^{(N)}(z) &= \beta^N\sum_{n = 2}^\infty nz^n \sum_{\omega \in \Wcal_n}Q^{(N)}[0,n].
\end{align}

We will prove the following proposition.
For simplicity, we do not attempt to prove sharp bounds on $S^{N}$ and $S'^{(N)}$,
although we believe that these terms are in fact smaller than $T^{N}$ and $T'^{(N)}$
(via improved decay in $V$, so not only smaller by an extra factor $\beta$).
We again absorb any polynomial factors in $N$ into $C^N$ as discussed below
\eqref{e:bd_Pi_hat_N_and_prime}.

\begin{prop}
\label{prop:bd_S_T-p}
Let $d>4$ and let $\beta >0$ be sufficiently small.
Let $z\in U$ and let $N \geq 2$.
There is a constant $C$ independent of $\beta,r,N,z$ such that
\begin{align}
\label{e:SNbd}
	|S^{(N)}(z)| &\leq \beta (C \beta)^{N}\frac{r^2+\chi(|z|)}{V},
\\
\label{e:SNpbd}
	|z S'^{(N)}(z)| &\leq  \beta(C\beta)^{N} \frac{\chi(|z|)(r^2+\chi(|z|))}{V},
\\
\label{e:TNbd}
	|T^{(N)}(z)| &\leq
    (C\beta)^{N} \frac{r^2+\chi(|z|)}{V},
\\
\label{e:TNpbd}
	|z T'^{(N)}(z)| &\leq (C\beta)^{N}
    \frac{\chi(|z|)(r^2+\chi(|z|))}
    {V}
    .
\end{align}
\end{prop}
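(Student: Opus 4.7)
The plan is to derive diagrammatic bounds on $T^{(N)}, zT'^{(N)}, S^{(N)}, zS'^{(N)}$ that refine the standard lace-expansion estimates of Proposition~\ref{prop:Pidots} by tracking a single \emph{wrap} (a pair of walk positions with a nonzero torus-equivalent $\Z^d$ displacement) and extracting the finite-volume correction from it via the torus plateau of Lemma~\ref{lem:GT3}.

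For $T^{(N)}$, I would observe from \eqref{e:def-Q_def} that the factor $\prod(-U^\T_{st}) - \prod(-U_{st})$ is nonzero only when all $N$ lace edges are torus coincidences and at least one is a strict wrap. A union bound over the choice $i^* \in \{1, \ldots, N\}$ of which lace edge wraps decomposes $Q^{(N)}$ into $N$ pieces, each supported on configurations where $\omega(t_{i^*}) - \omega(s_{i^*}) = r u_{i^*}$ with $u_{i^*} \neq 0$. After summation over $\omega$ and $n$, the designated wrap contributes $\sum_{u \neq 0} G_z(y + ru) \leq C\chi(|z|)/V$ via \eqref{e:plateau-ub}, while the remaining $N-1$ lace edges and the compatible factors are controlled by the torus analogue of Lemma~\ref{lem:convol_bound}, using the uniform bounds $\|\Gamma_z\|_\infty \prec 1$ and $\|\Gamma_z \star \Gamma_z\|_\infty \prec 1$ for $z \in U$ (from Lemma~\ref{lem:GT3} and Remark~\ref{rk:zeta}) to yield a factor $(C\beta)^{N-1}$. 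Combined with the $N$ choices of $i^*$ and the overall $\beta^N$, this proves \eqref{e:TNbd}. The derivative bound \eqref{e:TNpbd} follows by distributing the factor $n = \sum_{k=1}^n 1$ over the $2N-1$ subwalks exactly as in \eqref{e:nGG} and in the proof of Proposition~\ref{prop:Pidots}: converting one $G_z$ line into $G_z * G_z$ and summing over the extra vertex produces the additional $\chi(|z|)$ factor.

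For $S^{(N)}$, all lace edges are genuine $\Z^d$ coincidences and the discrepancy lies in the compatible region. Using \eqref{e:UT+} and \eqref{e:bd_prod_to_sum}, I would bound the compatible difference in \eqref{e:def-P_def} by $\beta \sum_{s't' \in \Ccal(L)} |U^+_{s't'}|$, and each term in this inner sum inserts an extra wrap pair $(\omega(s'), \omega(t'))$ with $\omega(t') - \omega(s') = r u$, $u \neq 0$, into the standard $\hat\Pi^{(N)}_z$ diagram. A case analysis over where the wrap pair is located (within a single subwalk or straddling two) -- analogous to the proof of \eqref{e:widetilde_psi} in Proposition~\ref{prop:Delta_1_prime_bound-upper} -- gives contributions involving sums of the form $\sum_{u \neq 0} G_z(ru)\, {\sf T}_z(y + ru)$ and $\sum_{u \neq 0} \bubble_z(ru)\, \bubble_z(y + ru)$, multiplied by the residual lace structure. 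Each such sum is bounded either by $C\chi(|z|)/V$ using the plateau bound $\sum_{u \neq 0} G_z(y + ru) \leq C\chi(|z|)/V$, or by $Cr^2/V$ using the pointwise decay $G_z(ru) \leq C\|ru\|_\infty^{-(d-2)} \leq Cr^{-(d-2)} = Cr^2/V$; the triangle and bubble factors are uniformly bounded for $z \in U$ by Lemma~\ref{lem:G3}. The remaining $N$-fold lace structure contributes $(C\beta)^N$ via Lemma~\ref{lem:convol_bound}, and combined with the extra $\beta$ from the selected compatible interaction yields \eqref{e:SNbd}. The derivative bound \eqref{e:SNpbd} follows from the same $n = \sum_k 1$ argument contributing an extra $\chi(|z|)$ factor.

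The main obstacle is the case analysis for $S^{(N)}$: the additional wrap pair from the compatible region may be inserted in the diagram in many ways relative to the lace, and ensuring that the combinatorial sum over insertion locations produces only a factor of $N$ (rather than a worse power of $N$) requires a uniform diagrammatic estimate that isolates the wrap and bounds the remainder by $C^N$ independently of its location. Beyond this, the two different ways of estimating the wrap sum -- via the plateau (yielding $\chi(|z|)/V$) and via the pointwise decay (yielding $r^2/V$) -- must be chosen appropriately for each diagrammatic configuration in order to produce the final $(r^2 + \chi(|z|))/V$ factor without losing constants in $N$.
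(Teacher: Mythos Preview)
Your treatment of $T^{(N)}$ and $zT'^{(N)}$ is essentially the paper's approach: a union bound over which lace edge wraps, followed by the plateau estimate on a long subwalk and the torus bubble for the rest. One small imprecision: the displacement $\omega(t_{i^*})-\omega(s_{i^*})=ru$ is carried by the loop made of two or three diagram subwalks, not by a single $G_z$ line, so one first argues that one of those subwalks has $\ell_\infty$-displacement $\ge r/3$ and puts the supremum norm on that line (yielding $(r^2+\chi)/V$ rather than $\chi/V$). This is cosmetic.

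The gap is in $S^{(N)}$. Two problems:

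\emph{The triangle is not uniformly bounded.} For $4<d\le 6$, Lemma~\ref{lem:G3} gives ${\sf T}_z(x)\prec m(z)^{-(6-d)}$, which diverges as $z\to z_c$ and is only $\prec \chi(|z|)^{(6-d)/2}$ on $U$. So your step ``the triangle and bubble factors are uniformly bounded for $z\in U$'' fails, and the bound $\sum_{u\neq 0}G_z(ru){\sf T}_z(y+ru)\le C\chi/V$ cannot be obtained by pulling out $\sup{\sf T}_z$. (The paper uses this sum only for $S'^{(N)}$, bounding it by $\chi^2/V$ via Lemma~\ref{lem:S_prime_case_1_as_psi}, which is acceptable for the derivative but too large for $S^{(N)}$.)

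\emph{The case analysis is too coarse and the diagrams you write down do not arise when $a,b$ lie on different subwalks.} A compatible edge can have its endpoints on subwalks whose labels differ by up to $3$, not just $0$ or $1$. The paper splits into three cases: same subwalk (Case~1), same loop but different subwalks (Case~2), and adjacent loops (Case~3). In Case~1 the bound does factorise as you sketch (with bubble, not triangle), giving $r^2/V$. But in Cases~2 and~3 the two extra vertices sit on distinct diagram lines, and the resulting four-line structure does not reduce to $G_z(ru)\cdot{\sf T}_z$ or $\bubble_z(ru)\cdot\bubble_z$. The paper handles these cases by a Cauchy--Schwarz argument that packages the four lines into the auxiliary function $\Psi_z(x)=\big(\sum_{u\neq 0}(G_z^2*G_z^2)(x+ru)\big)^{1/2}$, proves $\Psi_z(0)\prec r^{-(d-2)}$ and $\|\Psi_zG_z\|_2\prec r^{-(d-2)}$, and then feeds these back into Lemma~\ref{lem:convol_bound}. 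This Cauchy--Schwarz step is the crux of the $S^{(N)}$ bound and is absent from your plan. Without it, the main obstacle you identify (controlling the insertion location uniformly in $N$) cannot be overcome, because the diagram simply does not factorise in the way you assume.
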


\begin{proof}[Proof of Proposition~\ref{prop:Deltabds}]
For $N \geq 2$, this is a direct consequence of Proposition~\ref{prop:bd_S_T-p} together with \eqref{e:DeltaNTS} and \eqref{e:DeltaNTS_prime}. For $N=1$, it is an
immediate corollary of Propositions~\ref{prop:Delta_1_bound} and \ref{prop:Delta_1_prime_bound-upper}.
\end{proof}

It remains only to prove Proposition~\ref{prop:bd_S_T-p}.

\subsection{Proof of Proposition~\ref{prop:bd_S_T-p}}

We begin with some initial bounds on $S^{(N)}$ and $T^{(N)}$ in
Lemma~\ref{lem:Delta_as_T_m_S}.
Although we need estimates for complex $z$, we see from \eqref{e:DeltaNTS} and from non-negativity of $P^{(N)}[0,n]$ and $Q^{(N)}[0,n]$ that for any complex $z$,
\begin{equation}
    |\Delta^{(N)}(z)|
    \le
    S^{(N)}(|z|) + T^{(N)}(|z|),
\end{equation}
and similarly for $|\Delta^{'(N)}(z)|$.
For this reason, we will only be working with \emph{real non-negative $z$} in the rest of this section.

\subsubsection{Bounds on $S,T$}

\begin{lemma}
\label{lem:Delta_as_T_m_S}
For any $z \ge 0$ and $N \geq 2$,
$T^{(N)}(z)$ and $S^{(N)}(z)$ are nonnegative and satisfy
\begin{align}
\label{e:bd_first_S}
	S^{(N)}(z) &\leq
    \beta^{N+1}\sum_{n=1}^\infty z^n \sum_{\omega \in\Wcal_n} \sum_{L \in \Lcal^{(N)}[0,n]}
    \sum_{ab \in \Ccal(L)}(-U^+_{ab})\prod_{st \in L}(-U_{st})
    \prod_{s't' \in \Ccal(L)}(1+\beta U_{s't'}) ,
    \\
\label{e:bd_first_T}
	T^{(N)}(z) &\leq
    \beta^N\sum_{n=1}^\infty z^n \sum_{\omega \in\Wcal_n}\sum_{L \in \Lcal^{(N)}[0,n]}
    \sum_{ab \in L} (-U^+_{ab})\prod_{st\in L\setminus\{ab\}}(-U^\T_{st})
    \prod_{s't' \in \Ccal(L)} (1+\beta U^\T_{s't'}) .
\end{align}
In addition, $zS'^{(N)}(z)$ and $zT'^{(N)}(z)$ are bounded by the above right-hand sides
with $z^n$ replaced by $nz^n$.
\end{lemma}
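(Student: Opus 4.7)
The plan is to derive both inequalities by rewriting the differences
$\prod(1+\beta U_{s't'})-\prod(1+\beta U^\T_{s't'})$ in \eqref{e:def-P_def} and
$\prod(-U^\T_{st})-\prod(-U_{st})$ in \eqref{e:def-Q_def} in a form that exposes
a single distinguished $U^+$ factor. The essential algebraic inputs are the
factorization $(1+\beta U^\T_{st})=(1+\beta U_{st})(1+\beta U^+_{st})$ from
\eqref{e:UT+} and the pointwise bounds
$0\le -U_{st}\le -U^\T_{st}\le 1$ and $0\le -U^+_{st}\le 1$, all immediate
from \eqref{e:Uplusdef} and from the fact that $U_{st}$ and $U^+_{st}$ are
never simultaneously nonzero. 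Nonnegativity of $S^{(N)}$ and $T^{(N)}$ has
already been noted in the text, so it suffices to establish the stated upper
bounds.

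For \eqref{e:bd_first_S}, I would apply \eqref{e:UT+} edge-by-edge in the
compatible-edge product of $P^{(N)}[0,n]$, so that
\begin{equation*}
\prod_{s't'\in\Ccal(L)}(1+\beta U_{s't'})-\prod_{s't'\in\Ccal(L)}(1+\beta U^\T_{s't'})
=\prod_{s't'\in\Ccal(L)}(1+\beta U_{s't'})\Big[1-\prod_{s't'\in\Ccal(L)}(1+\beta U^+_{s't'})\Big].
\end{equation*}
Setting $u_{ab}=-\beta U^+_{ab}\in[0,\beta]\subset[0,1]$ and applying
\eqref{e:bd_prod_to_sum}, the bracket is at most
$\beta\sum_{ab\in\Ccal(L)}(-U^+_{ab})$. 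Substituting this into
\eqref{e:def-P_def} and then into the definition of $S^{(N)}(z)$ yields exactly
\eqref{e:bd_first_S}, the extra $\beta$ being precisely the one produced by
\eqref{e:bd_prod_to_sum}.

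For \eqref{e:bd_first_T}, I would telescope over the edges of a fixed lace
$L=\{s_1t_1,\ldots,s_Nt_N\}$, using $(-U^\T_{s_kt_k})-(-U_{s_kt_k})=-U^+_{s_kt_k}$:
\begin{equation*}
\prod_{i=1}^N(-U^\T_{s_it_i})-\prod_{i=1}^N(-U_{s_it_i})
=\sum_{k=1}^N\Big(\prod_{i<k}(-U^\T_{s_it_i})\Big)(-U^+_{s_kt_k})\Big(\prod_{i>k}(-U_{s_it_i})\Big).
\end{equation*}
Since $-U_{s_it_i}\le -U^\T_{s_it_i}$, replacing each $-U_{s_it_i}$ by
$-U^\T_{s_it_i}$ in the last product only increases the right-hand side and
produces exactly the expression
$\sum_{ab\in L}(-U^+_{ab})\prod_{st\in L\setminus\{ab\}}(-U^\T_{st})$ that
appears in \eqref{e:bd_first_T}; combining with the compatible-edge product
$\prod_{s't'\in\Ccal(L)}(1+\beta U^\T_{s't'})$ from $Q^{(N)}[0,n]$ completes the
bound.

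The derivative bounds on $zS'^{(N)}(z)$ and $zT'^{(N)}(z)$ are then immediate:
the two inequalities above are term-by-term bounds at the level of the
coefficients $\sum_{\omega\in\Wcal_n}P^{(N)}[0,n]$ and
$\sum_{\omega\in\Wcal_n}Q^{(N)}[0,n]$, and applying $z\partial_z$ simply
replaces $z^n$ by $nz^n$ in the power series, so the identical upper bounds
hold with $z^n$ replaced by $nz^n$. The only point requiring care, and the
main (modest) obstacle, is the telescoping step for $T^{(N)}$: one must keep
the $U^+$-factor at position $k$ unaltered and only enlarge the $U$'s at
positions $i>k$ to $U^\T$'s, since enlarging the factor at position $k$ itself
would cost the very cancellation the identity $U^\T=U+U^+$ provides.
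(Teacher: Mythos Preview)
Your proof is correct. The treatment of $S^{(N)}$ is identical to the paper's: factor out $\prod(1+\beta U_{s't'})$ via \eqref{e:UT+} and apply \eqref{e:bd_prod_to_sum} to $1-\prod(1+\beta U^+_{s't'})$. For $T^{(N)}$ you use an additive telescoping based on $(-U^\T_{st})-(-U_{st})=-U^+_{st}$ followed by the monotonicity $-U_{st}\le -U^\T_{st}$, whereas the paper instead uses the multiplicative factorization $U_{st}=U^\T_{st}(1+U^+_{st})$ to write
\[
\prod_{st\in L}(-U^\T_{st})-\prod_{st\in L}(-U_{st})
=\Big[1-\prod_{st\in L}(1+U^+_{st})\Big]\prod_{st\in L}(-U^\T_{st}),
\]
applies \eqref{e:bd_prod_to_sum}, and then simplifies using $(-U^+_{ab})(-U^\T_{ab})=-U^+_{ab}$. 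Both routes are short and equally elementary; your telescoping avoids the auxiliary identity $U_{st}=U^\T_{st}(1+U^+_{st})$ but requires the extra monotonicity step to replace $-U$ by $-U^\T$ on the edges $i>k$, while the paper's factorization produces the $-U^\T$ factors directly. The derivative statement is handled identically in both.
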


\begin{proof}
Starting with $S^{(N)}(z)$, we note that it follows from \eqref{e:UT+} that
\begin{align}
	P^{(N)}[0,n] &=
    \sum_{L \in \Lcal^{(N)}[0,n]}\prod_{st \in L}(-U_{st})
    \prod_{s't' \in \Ccal(L)}(1+\beta U_{s't'})
    \Big(1-\prod_{s't' \in \Ccal(L)}(1+\beta U^+_{s't'})\Big).
\end{align}
We then use \eqref{e:bd_prod_to_sum}
to see that
\begin{align}
	P^{(N)}[0,n] &\leq
    \sum_{L \in \Lcal^{(N)}[0,n]}\prod_{st \in L}(-U_{st})
    \prod_{s't' \in \Ccal(L)}(1+\beta U_{s't'})
    \sum_{ab \in \Ccal(L)}(-\beta U^+_{ab}).
\end{align}
Upon multiplying by $z^n$ and summing over $n$ this gives the bound on $S^{(N)}$.

For $T^{(N)}$, we use the identity $U_{st}=U_{st}^\T(1+U_{st}^+)$ which is a
consequence of the definitions  \eqref{e:def-U_T_and_+}--\eqref{e:Uplusdef},
followed by \eqref{e:bd_prod_to_sum},
followed by the identity $U^+_{ab} U^\T_{ab} = U_{ab}^+$, to see that
\begin{align}
	\prod_{st \in L}(-U^\T_{st})- \prod_{st \in L}(-U_{st})
    &=
    \left[ 1 - \prod_{st \in L}(1 + U^{+}_{st}) \right]
    \prod_{st \in L}(-U^\T_{st})
    \nnb & \le
    \sum_{ab \in L} (-U_{ab}^+)
    \prod_{st \in L}(-U^\T_{st})
    \nnb & = \sum_{ab \in L}(-U^+_{ab})\prod_{st \in L\setminus\{ab\}}(-U^\T_{st}).
\end{align}
With \eqref{e:def-Q_def} and \eqref{e:TNdef}, this leads to \eqref{e:bd_first_T}.

For the derivatives it is the same, apart from the occurrence
of $nz^n$ rather than $z^n$  due to differentiation.
\end{proof}

\subsubsection{Proof of bounds on $T$}

\begin{proof}[Proof of \eqref{e:TNbd}--\eqref{e:TNpbd}]
We are interested in nonnegative $z$ in the disk $U$ so we
take $z\in [0,z_c(1-V^{-1/2})]$.
We reinterpret the bound on $T^{(N)}$ in Lemma~\ref{lem:Delta_as_T_m_S} in terms
of a sum over torus walks.
In this interpretation, we replace the sum over $\Z^d$-walks
\begin{equation}
	\sum_{\omega \in\Wcal_n}\sum_{L \in \Lcal^{(N)}[0,n]}
    \sum_{ab \in L} (-U^+_{ab})\prod_{st\in L\setminus\{ab\}}(-U^\T_{st})
    \prod_{s't' \in \Ccal(L)} (1+\beta U^\T_{s't'})
\end{equation}
from \eqref{e:bd_first_T} by a sum over torus walks
\begin{equation}
\label{e:Twrap}
	\sum_{\omega \in\Wcal_n^\T}\sum_{L \in \Lcal^{(N)}[0,n]}
    \sum_{ab \in L} (-U^{{\rm wrap}}_{ab})\prod_{st\in L\setminus\{ab\}}(-U_{st})
    \prod_{s't' \in \Ccal(L)} (1+\beta U_{s't'}),
\end{equation}
where $U^{{\rm wrap}}_{ab}$ is equal to $-1$ if $\omega(a)=\omega(b)$ via a path
that wraps around the torus and otherwise is equal to zero.
Then walks that make a nonzero contribution to \eqref{e:Twrap}
follow the trajectory of the familiar $N$-loop lace diagram
(from Figure~\ref{fig:pi}) on the torus with the restriction that
at least one of the diagram loops must wrap around the torus as in Figure~\ref{fig:T_long_walk}.

\begin{figure}[t]
\begin{center}
	\includegraphics[scale = 0.5]{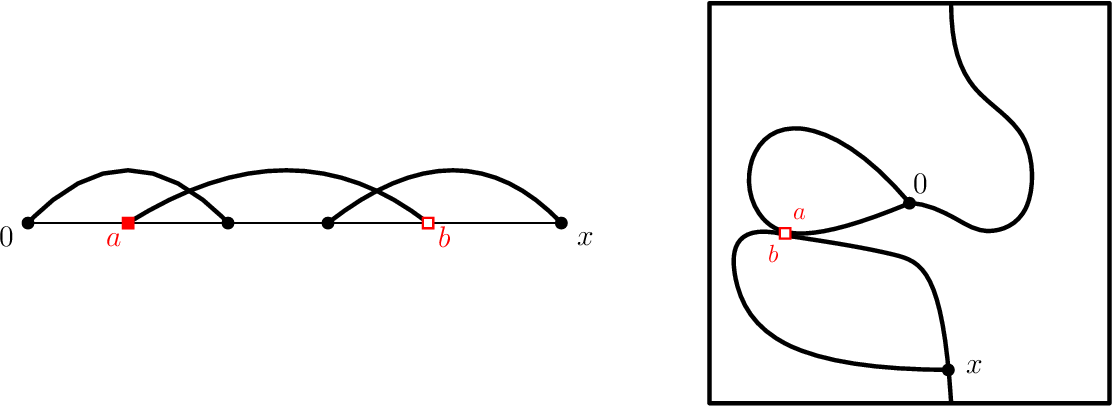}
\end{center}
\caption{ \label{fig:T_long_walk}
An example of a 3-bond lace with a corresponding torus walk configuration
contributing to $T^{(3)}(z)$.
}
\end{figure}

This wrapping loop consists of two (if it is one of the end loops) or three (if it
is an interior loop) subwalks.  One of these subwalks must
have $\ell_\infty$-displacement at least
$r/3$, because the loop travels a distance of at least $r$.
We sum over the two or three cases
which specify which of the subwalks must travel at least $r/3$.  Once that
subwalk is fixed,
using Lemma~\ref{lem:convol_bound}
we bound its diagram line with the supremum norm, and bound all
the other lines as usual by $\|G_z^\T \star G_z^\T\|_\infty^{N-1}$.
The lift of the long subwalk
to $\Z^d$ must have $\ell_\infty$-displacement
at least $r/3$.  It is therefore
bounded above (using $K^{\T} \le K$) by
\begin{align}
\label{e:Twrap1}
    \sup_{x\in\T_r^d} \sum_{u \in \Z^d : \|x+ru\|_\infty \ge r/3} G_z(x+ru)
    .
\end{align}
If the torus point $x$ satisfies $\|x\|_\infty \ge r/3$ then we can simply remove the
restriction on the sum over $u$ and bound the right-hand side using \eqref{e:plateau-ub},
by $G_z(x) + C\chi/V \prec r^{-(d-2)} + \chi/V$.  If instead $\|x\|_\infty < r/3$, then
the $u=0$ term is excluded by the restriction on the summation, and we instead
have a bound just by $\chi/V$ as in \eqref{e:GammachiV}.  Thus, in any case, we obtain a factor
\begin{equation}
    \frac{1}{V}(r^2 + \chi(z))
\end{equation}
from the long line, so as in \eqref{e:PiNbub}
we obtain
\begin{align}
    T^{(N)}(z)
    & \le
    \beta^N O(N)
    \left( \frac{r^2 + \chi(z)}{V} \right)
    \|G_z^\T \star G_z^\T\|_{\infty}^{N-1}
\end{align}
and the desired estimate \eqref{e:TNbd}
follows from \eqref{e:GGstar}.

For the derivative, we can proceed as above but now with
an extra vertex.  If the extra vertex is on the long line
identified as above then we replace
$G_z^\T(x)$ in the above by $(G_z^\T \star G_z^\T)(x)$
(this is as in \eqref{e:nGG}) with the restriction that one of
the two-point functions in the convolution must have a lift whose
displacement is of order $r$.
We bound the supremum norm of this restricted convolution by putting the
$\ell_\infty$-norm on the long factor and the $\ell_1$-norm (which yields $\chi^\T \le \chi$)
on the other.  The $\ell_\infty$ norm is bounded exactly as in the previous
paragraph, so that overall we obtain a bound this line by
\begin{equation}
    \frac{1}{V}(r^2 + \chi) \chi .
\end{equation}

If the extra vertex is not on the long line then we replace the usual weight
$G_z^\T$ for the line containing the extra vertex by $G_z^\T\star G_z^\T$ so that one factor
$\|G_z^\T\star G_z^\T\|_\infty$ gets replaced by $\|G_z^\T \star G_z^\T \star G_z^\T\|_\infty$.
This gives, as in Proposition~\ref{prop:Pidots},
\begin{align}
    T'^{(N)}(z)
    & \le
    \beta^N O(N) \left(
    \frac{r^2\chi(z)
    + \chi(z)^2}{V} \right) \|G_z^\T \star G_z^\T\|_{\infty}^{N-1}
    \nnb & \quad +
    \beta^N O(N) \left( \frac{r^2 + \chi(z)}{V} \right)
    \|G_z^\T \star G_z^\T \star G_z^\T\|_\infty \|G_z^\T \star G_z^\T\|_{\infty}^{N-2}    .
\end{align}
The first term has the upper bound
that we desire.
By \eqref{e:GGGstar} and \eqref{e:GGG}, the three-fold convolution is
less than $\chi +\chi^3/V$,
so overall
the second term contains a factor
\begin{equation}
    \frac{\chi(z)(r^2 + \chi(z))}{V}
    \left(
    1
    + \frac{\chi(z)^2}{V} \right)
    .
\end{equation}
By Remark~\ref{rk:zeta}, this is sufficient.
\end{proof}

\subsubsection{Proof of bounds on $S$}

We now prove the bounds on $S^{(N)}$ and $S'^{(N)}$ stated in \eqref{e:SNbd}--\eqref{e:SNpbd}.
This is the most elaborate part of the proof of Proposition~\ref{prop:AD-bis}.
Recall from \eqref{e:bd_first_S} that
\begin{align}
\label{e:bd_first_S-bis}
	S^{(N)}(z) &\leq
    \beta^{N+1}\sum_{n=1}^\infty z^n \sum_{\omega \in\Wcal_n} \sum_{L \in \Lcal^{(N)}[0,n]}
    \sum_{ab \in \Ccal(L)}(-U^+_{ab})\prod_{st \in L}(-U_{st})
    \prod_{s't' \in \Ccal(L)}(1+\beta U_{s't'})
    .
\end{align}
This contains the desired
factor $\beta^{N+1}$ in $S^{(N)}$ which we
do not carry through the rest of the analysis.

For fixed $\omega$ and $L$,
\begin{align}
\label{e:S_feature}
	\sum_{ab \in \Ccal(L)}(-U^+_{ab})(\omega)
	&=
    \sum_{x \in \Z^d}\sum_{u \neq 0 }\sum_{ab \in \Ccal(L)}
    \1\{\omega(a) = x, \; \omega(b) = x+ru\}.
\end{align}
Thus, the diagram corresponding to $S^{(N)}$ is a standard $\Pi$-diagram (as in Figure~\ref{fig:pi}) with two additional vertices $x$ and $x+ru$ that are
distinct but have the same torus projection.
In the $N$-loop diagram, we label the $2N-1$ subwalks
from $1$ to $2N-1$ in their order of appearance.
From the definition of compatible edges and from
Figure~\ref{fig:S_lace}, we see that the additional two vertices
must belong to two subwalks
whose
labels differ by at most $3$ or are identical.
In what follows, for simplicity we restrict attention to the case where the vertices $a,b$ in
Figure~\ref{fig:S_lace} do not lie on the lines $1,2,2N-2$ or $2N-1$, i.e. on the first or last loop.
The omitted extreme cases are straightforward extensions of our analysis.

We fix
 $z\in [0,z_c(1-V^{-1/2})]$
for the rest of this section.
We will prove the inequalities listed in the following three cases.
Together, they
provide a proof of Proposition \ref{prop:bd_S_T-p} and in fact most of these bounds are better than what is needed.

\smallskip
{\bf \emph{Case~1}:}
$a,b$ occur on the same subwalk: $b=b_1,b'_1$ in Figure~\ref{fig:S_diagram}.
We prove that
\begin{equation}
	\hspace{-0.42cm}\text{ Case~1(a): } S^{(N)}(z)
    \prec
	C^N \frac{r^2}{V}
    ,
    \qquad
	\text{ Case~1(b): }  S'^{(N)}(z)
    \prec
    C^N\frac{ \chi(z)( r^2+ \chi(z))}{V}.
\end{equation}

\smallskip
{\bf \emph{Case~2}:}
$a,b$ occur on the same loop but on different subwalks: $b=b_2,b_3,b'_2,b'_3,b'_4,b'_5$ in Figure~\ref{fig:S_diagram}. We prove that
\begin{equation}
	\text{ Case~2(a): } S^{(N)}(z)  \prec
    C^{N}\frac{\chi(z)}{V}\frac{1}{r^{d-4}}, \qquad
	\text{ Case~2(b): }S'^{(N)}(z) \prec
    C^{N}\frac{\chi(z)^2}{V}\frac{1}{r^{(d-4)/2}}.
\end{equation}

\smallskip
{\bf \emph{Case~3}:}
$a,b$ occur on adjacent loops: $b=b_4,b_5,b_6,b'_6$ in Figure~\ref{fig:S_diagram}. We prove that
\begin{equation}
	\text{ Case~3(a): }S^{(N)}(z) \prec
    C^N \frac{\chi(z)}{V}\frac{1}{r^{d-4}}, \qquad
	\text{ Case~3(b): }S'^{(N)}(z) \prec
    C^N \frac{\chi(z)}{V}\frac{1}{r^{(d-4)/2}}.
\end{equation}

\begin{figure}[h]
\centering
    \includegraphics[width=14cm, height=3cm]{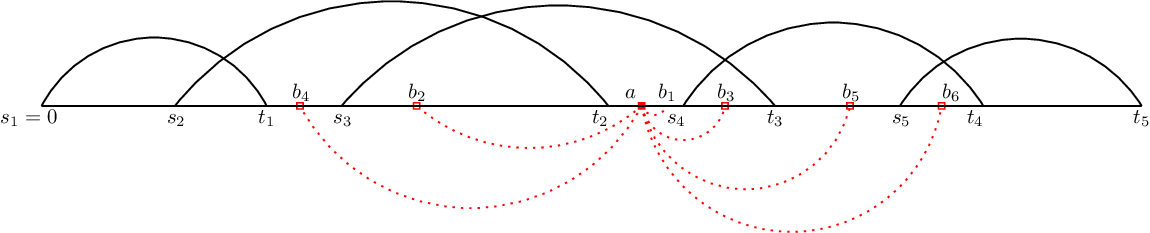}
	\caption{A $5$-edge lace
with  compatible edges for $a$
depicted with dotted lines: there are $6$ subwalks
where $b$ may occur, with
generic locations denoted $b_1,\ldots,b_6$.}
	\label{fig:S_lace}
\end{figure}

\begin{figure}[h]
\centering
    \includegraphics[scale = 0.75]{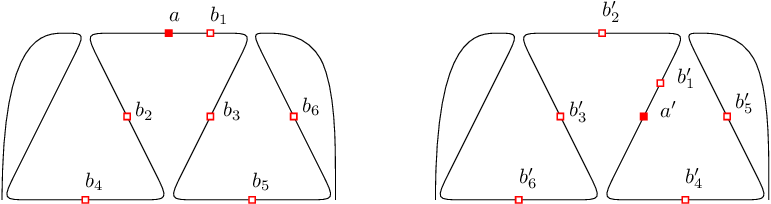}
	\caption{Walk diagrams for the lace in Figure~\ref{fig:S_lace} with possible locations for the vertices $x$ and $x+ru$ from \eqref{e:S_feature}
represented by squares and boxes. Two possible configurations are depicted.}
	\label{fig:S_diagram}
\end{figure}

\bigskip \noindent {\bf \emph{Case~1(a) for $S^{(N)}$.}}
In this case,
using Lemma~\ref{lem:convol_bound}
 we bound the line containing $a,b$ with the supremum norm and bound all
other lines with the bubble.  We obtain $C^{N-1}$ times
\begin{align}
\label{e:case_1_first_bd}
    \sup_{x\in\Z^d} \sum_{v\in\Z^d} \sum_{u \neq 0} G_z(v) G_z(ru)
    G_z(x - v -ru),
\end{align}
and we now focus on bounding \eqref{e:case_1_first_bd}.
Since $u \neq 0$ we have $G_z(ru) \prec r^{-(d-2)}$.
This gives an upper bound, up to multiplicative constant, of the form
\begin{align}
	r^{-(d-2)}\sup_{x\in\Z^d}\sum_{v\in\Z^d} \sum_{u \neq 0}G_z(v)   G_z(x- v -ru)
    =
    r^{-(d-2)}\sup_{x\in\Z^d}\sum_{u \neq 0} \bubble_z(x-ru).
\end{align}
By \eqref{e:Bbd} and the fact that $\bubble(y)$ is maximal at $y=0$
by the Cauchy--Schwarz inequality, this
is bounded by
\begin{equation}
    \frac{1}{r^{d-2}} \Big(\bubble_{z}(0) + C\frac{\chi(z)}{V} \Big),
\end{equation}
from which we see that
\begin{align}
	 \text{Case~1(a)}
    &\prec C^N\frac{r^2}{V}\Big(1+\frac{\chi(z)^2}{V}\Big).
\end{align}
By Remark~\ref{rk:zeta}, this is sufficient.
\qed

\bigskip \noindent {\bf \emph{Case~1(b) for $S'^{(N)}$.}}
If the extra vertex from the derivative is on a different line than the one containing $a,b$,
then we replace one factor $\|G_z*G_z\|_\infty$ by $\|G_z*G_z*G_z\|_\infty$,
which by \eqref{e:GGG} is at most $\chi(z)$.  This increases the bound from
Case~1(a) by a factor $\chi$, leading to an overall bound from this contribution of the form
\begin{equation}
	C^{N}\frac{r^2\chi(z)}{V}\Big(1+\frac{\chi(z)^2}{V}\Big).
\end{equation}
If the extra vertex is instead on the same line as $a,b$ then we bound that line
as in the bound on $\widetilde \psi$ (recall Figure~\ref{fig:one_loop_prime})
in the proof of
Proposition~\ref{prop:Delta_1_prime_bound-upper} with
a multiple of the supremum over $x\in\Z^d$ of
\begin{equation}
\label{e:1prime}
    \sum_{u\neq 0}\Big(\bubble_z(x-ru)\bubble_z(ru)+ {\sf T}_z(x-ru)G_z(ru)\Big).
\end{equation}
To see in detail how one of these terms arises, the first diagram of
Figure~\ref{fig:one_loop_prime} with the origin moved to the
box represents
\begin{equation}
    \sum_{u \neq 0} G_z(ru) \sum_{v,w\in\Z^d} G_z(v)G_z(w-v)G_z(w+x-ru)
    =
    \sum_{u \neq 0} G_z(ru) {\sf T}_z(x-ru).
\end{equation}
The sum \eqref{e:1prime} is shown in Lemma~\ref{lem:S_prime_case_1_as_psi}
to be bounded by $V^{-1}\chi(z)^2$ uniformly over $x \in \Z^d$.
Finally, summing both cases gives
\begin{equation}
	\text{ Case~1(b)}
    \prec C^N\frac{\chi(z)^2}{V} + C^{N}\frac{r^2\chi(z)}{V}\Big(1+\frac{\chi(z)^2}{V}\Big).
\end{equation}
This is sufficient by Remark~\ref{rk:zeta}.
\qed

\medskip
For Cases~2 and 3, we isolate some useful estimates in Lemma~\ref{lem:psi_psi_tilde_bds}.
For this, we define functions $\Psi_z$ and $\widetilde \Psi_z$ on $\Z^d$ by
\begin{align}
\label{e:psi_def}
	\Psi_z(x)
	&= \Big[\sum_{u \neq 0}(G_z^2*G_z^2)(x+ru)\Big]^{1/2},\\
\label{e:psi_tilde_def}
	\widetilde \Psi_z(x)
    &= \Big[\sum_{u \neq 0}\left( (G_z*G_z)^2*G_z^2\right) (x+ru)\Big]^{1/2}.
\end{align}

\begin{lemma}
\label{lem:psi_psi_tilde_bds}
Let $d>4$ and let $\beta>0$ be sufficiently small.
For any $z<z_c$,
\begin{align}
	\Psi_z(0)
	&\prec \frac{1}{r^{d-2}}, \qquad
    \|\Psi_zG_z\|_2
	\prec \frac{1}{r^{d-2}},  \\
	\widetilde \Psi_z(0)
	&\prec \frac{\chi(z)}{V^{1/2}}, \qquad
    \|\widetilde \Psi_z G_z\|_2
	\prec \frac{\chi(z)}{V^{1/2}},
\end{align}
and these bounds remain satisfied with any $G_z$ (including those in $\Psi_z$
and $\widetilde\Psi_z$) replaced by $(G_z^2*G_z^2)^{1/2}$.
\end{lemma}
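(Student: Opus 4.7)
The plan is to combine the near-critical decay bounds from Theorem~\ref{thm:mr}, $G_z(x) \prec \xvee^{-(d-2)} e^{-c_1 m(z)\|x\|_\infty}$, and from Lemma~\ref{lem:G3}, $\bubble_z(x) \prec \xvee^{-(d-4)} e^{-c_1 m(z)\|x\|_\infty}$, with repeated applications of the convolution Lemma~\ref{lem:conv-nu} and the inequality $m(z)^{-2} \prec \chi(z)$ from \eqref{e:mchi}.

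The bounds on $\Psi_z$ come first. Applying Lemma~\ref{lem:conv-nu} with $a=b=2(d-2) > d$ yields $(G_z^2 * G_z^2)(y) \prec \xvee^{-2(d-2)} e^{-cm\|y\|_\infty}$, and iterating gives the same decay for $G_z^2*G_z^2*G_z^2$. Since $\Psi_z(0)^2 = \sum_{u\ne 0}(G_z^2*G_z^2)(ru) \prec r^{-2(d-2)}\sum_{u\ne 0}\|u\|_\infty^{-2(d-2)}$ and $2(d-2) > d$, the series converges and I get $\Psi_z(0) \prec r^{-(d-2)}$. For the weighted norm, using symmetry $G_z(-x) = G_z(x)$ and the substitution $y = x+ru$ shows $\|\Psi_z G_z\|_2^2 = \sum_{u\ne 0}(G_z^2*G_z^2*G_z^2)(ru)$, which obeys the same bound.

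The main obstacle is the estimate on $\widetilde \Psi_z$, because $\bubble_z^2$ decays only like $\xvee^{-2(d-4)}$, which is not integrable when $4 < d \le 8$. Applying Lemma~\ref{lem:conv-nu} with $a = 2(d-2) > d$ and $b = 2(d-4)$ gives $(G_z^2 * \bubble_z^2)(y) \prec \xvee^{-2(d-4)} e^{-cm\|y\|_\infty}$, and a further convolution with $G_z^2$ preserves this decay. Thus
\begin{equation*}
\widetilde \Psi_z(0)^2 \prec r^{-2(d-4)} \sum_{u \ne 0} \|u\|_\infty^{-2(d-4)} e^{-cmr\|u\|_\infty}.
\end{equation*}
Bounding the sum by an integral and substituting $s = cmrt$ produces a factor $(mr)^{-(8-d)}$ for $4 < d < 8$, a factor $|\log(mr)|$ for $d = 8$, and a uniform constant for $d > 8$. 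In every case, combining with the prefactor $r^{-2(d-4)}$ gives $\widetilde \Psi_z(0)^2 \prec r^{-d} m^{-(8-d)_+}$, up to a logarithm when $d=8$. The key step is the inequality $m^{-2} \prec \chi$, which for $d > 4$ converts $m^{-(8-d)_+}$ (and the logarithm in $d=8$) into a factor $\prec \chi^2$, giving $\widetilde \Psi_z(0)^2 \prec \chi(z)^2/V$. The bound $\|\widetilde \Psi_z G_z\|_2 \prec \chi/V^{1/2}$ follows identically from $\|\widetilde \Psi_z G_z\|_2^2 = \sum_{u\ne 0}(G_z^2 * \bubble_z^2 * G_z^2)(ru)$ and the same decay estimates.

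Finally, for the assertion with $G_z$ replaced by $H_z$ defined by $H_z^2 = G_z^2 * G_z^2$: the identity $H_z(x)^2 \prec \xvee^{-2(d-2)}e^{-cm\|x\|_\infty}$ proved above shows that $H_z$ obeys the same type of pointwise decay as $G_z$ (with possibly smaller exponential constant), so every application of Lemma~\ref{lem:conv-nu} in the arguments above remains valid after replacing $G_z$ by $H_z$ throughout, and the same bounds result.
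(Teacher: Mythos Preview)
Your proof is correct and follows essentially the same strategy as the paper's: both combine the pointwise bounds $G_z^2(x) \prec \xvee^{-2(d-2)}e^{-cm\|x\|_\infty}$ and $\bubble_z^2(x) \prec \xvee^{-2(d-4)}e^{-cm\|x\|_\infty}$ with convolution estimates, then sum over $u \ne 0$ and convert mass factors into $\chi$ via \eqref{e:mchi}. The only difference is cosmetic: where you bound the lattice sum over $u$ by an explicit integral and split into the regimes $4<d<8$, $d=8$, $d>8$, the paper packages this step into a single citation of Lemma~\ref{lem:unifmassint} with $a=8-d$. Your identity $\|\Psi_z G_z\|_2^2 = \sum_{u\ne 0}(G_z^2*G_z^2*G_z^2)(ru)$ and its $\widetilde\Psi_z$ analogue are exactly what the paper uses as well.

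One minor point worth tightening: in the step ``$m^{-(8-d)_+}$ (and the logarithm in $d=8$) converts into a factor $\prec \chi^2$'' you are implicitly using not only $m^{-2}\prec\chi$ but also $\chi \ge 1$, so that $\chi^{(8-d)/2}\le\chi^2$ for $4<d\le 8$; and in the $d=8$ case you should note that when $mr<1$ one has $\chi \succ r^2$ (from $m^{-2}\prec\chi$), which is what makes $|\log(mr)| \prec \log\chi \prec \chi^2$ hold uniformly. These are routine but should be stated.
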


\begin{proof}
By the first case of
Lemma~\ref{lem:conv-nu}, $(G_z^2*G_z^2)(x) \prec \xvee^{-(2d-4)}e^{-2 c_1m(z)\|x\|_\infty}$, so by
Lemma~\ref{lem:unifmassint} with $a=4-d<0$ and $x=0\in\T_r^d$
we obtain the desired bound $\Psi_z(0)^2 \prec
r^{-(2d-4)}$.
Similarly, $[(G_z*G_z)^2*G_z^2](x) \prec \xvee^{-(2d-8)} e^{- 2c_1m(z)\|x\|_\infty}$
and from Lemma~\ref{lem:unifmassint} with $a=8-d$ we obtain the desired estimate
\begin{equation}
\label{e:2d-8_bd}
	\widetilde \Psi_z(0)^2
    \prec
    \sum_{u\neq 0}\frac{e^{- 2c_1 m(z)\|ru\|_\infty}}{\veee{ru}^{2d-8}}
    \prec
    \frac{\chi(z)^2}{V},
\end{equation}
where we have chosen the last bound so as to
accommodate the worst case which is
$d \downarrow 4$.

For the $\ell_2$ bounds, we first use the definition of $\Psi_z$ to see that
\begin{align}
\label{e:PsiG}
    \|\Psi_z G_z\|_2^2
    & =
    \sum_{u \neq 0} \sum_x G_z(x)^2 (G_z^2*G_z^2)(x+ru)
    =
    \sum_{u \neq 0} (G_z^2*G_z^2*G_z^2)(ru).
\end{align}
By Lemma~\ref{lem:conv-nu}, $(G_z^2*G_z^2*G_z^2)$ obeys the same estimate as $G_z^2*G_z^2$
used in the previous paragraph to bound $\Psi_z(0)$,
so \eqref{e:PsiG} is also bounded by $r^{-(2d-4)}$, as required.
Similarly,
\begin{align}
\label{e:PsitilG}
    \|\widetilde\Psi_z G_z\|_2^2
    & =
    \sum_{u \neq 0} ((G_z*G_z)^2*G_z^2*G_z^2)(ru),
\end{align}
and the extra factor $G_z^2$ in the convolution, compared to the bound on $\widetilde\Psi_z(0)$, again has no effect and we again obtain an upper bound $V^{-1}\chi(z)^2$ as in \eqref{e:2d-8_bd}.

Finally, by Lemma~\ref{lem:conv-nu} and since $2d-4 >d$,
\begin{equation}
	(G_z^2*G_z^2)^{1/2}(x) \prec \frac{e^{-c_1m(z)\|x\|_\infty}}{\xvee^{d-2}},
\end{equation}
which is the same as the bound on $G_z$ that we have used throughout this proof.
Therefore the above discussion also applies when we replace any $G_z$ by $(G_z^2*G_z^2)^{1/2}$.
\end{proof}

\begin{figure}[t]
\centering
	\includegraphics[scale = 0.6]{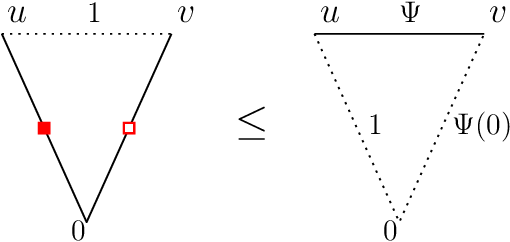}
	\caption{Diagrammatic bound for Case~2(a). Lines other than $G_z$ lines have their associated weights written next to them. Lines with constant weights are dotted.}
	\label{fig:S_case_2}
\end{figure}

\bigskip \noindent {\bf \emph{Case~2(a) for $S^{(N)}$.}}
We place the origin at the single common endpoint of the two distinct subwalks containing $a$ and $b$, as in Figure~\ref{fig:S_case_2}.
The portion of the diagram corresponding to these two lines is
\begin{equation}
	\sum_{x\in \Z^d}\sum_{s\neq 0} G_z(x)G_z(u-x)G_{z}(x+rs)G_z(v-x-rs)
    .
\end{equation}
We reorganise the above sum and apply the Cauchy--Schwarz inequality to see that it is bounded
by
\begin{align}
\label{e:S_2_lines_bd}
    & \sum_{s \neq 0} \sum_{x\in \Z^d}
    \left( G_z(x)G_z(x+rs) \right) \left( G_z(u-x)G_z(v-x-rs) \right)
    \nnb & \quad \le
    \Big( \sum_{s \neq 0} (G_z^2*G_z^2)(rs) \Big)^{1/2}
    \Big( \sum_{s \neq 0} (G_z^2*G_z^2)(v-u+ rs) \Big)^{1/2} = \Psi_z(0) \Psi_z(v-u).
\end{align}
We interpret this inequality as an upper bound in which the weights of the
original diagram lines containing $a$ and $b$
are replaced by constant functions $1$ and $\Psi_z(0)$, and
the weight of the line joining $u$ and $v$
is replaced by $\Psi_z G_z$ (instead of the original $G_z$).
This is depicted in Figure~\ref{fig:S_case_2}.

We bound this new diagram using Lemma~\ref{lem:convol_bound}.  We place
the supremum norm on the line containing $\Psi_z(0)$.
There is one factor with lines $G_z$ and $1$ which is dominated by $\|G_z*1\|_{\infty}$.
Another factor with lines $G_z$ and $\Psi_z G_z$ is controlled by $\|(\Psi_z G_z)*G_z\|_{\infty}$
All the other factors have $G_z$ lines and
give standard factors involving $G_z*G_z$ as usual.
By combining the above contributions, for $N \ge 3$ we obtain the upper bound
\begin{equation}
	\|G_z*G_z\|_\infty^{N-3}\|G_z*1\|_\infty \|(\Psi_z G_z) * G_z\|_\infty \|\Psi_z(0)\|_\infty.
\end{equation}
By using Cauchy--Schwarz for the first and third factors and rewriting the second and fourth ones we can simplify the bound as
\begin{equation}
\label{e:bubble_extr_S_2}
	\|G_z\|_2^{2N-5}\,\|G_z\|_1\,\|\Psi_zG_z\|_2\,\Psi_z(0).
\end{equation}
Since $\|G_z\|_1 = \chi(z)$ and $\|G_z\|_2 \leq \|G_{z_c}\|_2 < \infty$, it follows from Lemma~\ref{lem:psi_psi_tilde_bds} that \eqref{e:bubble_extr_S_2} is bounded above by
\begin{equation}
\label{e:bd_S_case_2}
	\text{ Case~2(a)}
    \prec C^{N}\chi(z)\frac{1}{r^{d-2}} \frac{1}{r^{d-2}}
    = C^N \frac{\chi(z)}{V}\frac{1}{r^{d-4}}.
\end{equation}
This gives the claimed result.
\qed

\bigskip \noindent {\bf \emph{Case~2(b) for $S'^{(N)}$.}}
The derivative adds a vertex to the diagrams arising in Case~2(a).  This
amounts to replacing exactly one of the weights $G_z$ by $G_z*G_z$.
We thus obtain an upper bound by adapting
the proof of Case~2(a) by replacing exactly one of the $G_z$ factors in \eqref{e:bubble_extr_S_2} (including the ones in $\Psi_z$) by $G_z*G_z$.
Note that $\widetilde \Psi_z$ is exactly obtained from $\Psi_z$ by replacing one of its $G_z$ factors by $G_z*G_z$.
This leads to one of the new factors, compared to \eqref{e:bubble_extr_S_2},
\begin{align}
\label{e:new_factors_S_2_G}
	\|G_z*G_z\|_2 &\leq \chi(z) ,
\qquad
	\|G_z*G_z\|_1 = \chi(z)^2, \\
\label{e:new_factors_S_2_Psi}
	\|\widetilde \Psi_zG_z\|_2 = \|\Psi_z(G_z*G_z)\|_2 &\leq \frac{\chi(z)}{V^{1/2}}, \qquad
	 \widetilde \Psi_z(0) \leq \frac{\chi(z)}{V^{1/2}}.
\end{align}
The first inequality follows as usual from Lemma~\ref{lem:unifmassint}
and is sharp when $d \downarrow 4$, the second is an elementary identity,
and the third and fourth follow from Lemma~\ref{lem:psi_psi_tilde_bds} together with the
identity $\|\widetilde \Psi_zG_z\|_2 = \|\Psi_z(G_z*G_z)\|_2$
(which holds by definition).
The net effect on \eqref{e:bd_S_case_2} is to replace a factor $r^{-(d-2)}$ by
$V^{-1/2}\chi$ or to multiply by an additional factor $\chi$.  Since $r^{d-2}V^{-1/2} >1$,
the former replacement dominates and we conclude that
\begin{equation}
    \text{ Case~2(b)}
    \prec C^{N}\chi(z)\frac{1}{r^{d-2}} \frac{\chi(z)}{r^{d/2}}
    = C^N \frac{\chi(z)^2}{V}\frac{1}{r^{(d-4)/2}}.
\end{equation}
\qed

\bigskip \noindent {\bf \emph{Case~3(a) for $S^{(N)}$.}}
Consider first the case where the lines containing $a$ and $b$ have no common endpoint (which can occur
only for $N\geq 4$), as in Figure~\ref{fig:S_case_3_1}.
These two lines involve four $G_z$ factors and by the Cauchy--Schwarz inequality
their contribution obeys
\begin{align}
	\sum_{x\in \Z^d} \sum_{s \neq 0}
    (G_z(v-x)G_z(w-x-rs))(G_z(x-u)G_{z}(x+rs))
	&\leq \Psi_z(w-v)\Psi_z(u).
\end{align}
This amounts to replacing each of the two opposite lines of a pair of consecutive loops
by $\Psi_z G_z$ instead of $G_z$, and the other pair of lines by $1$, as in
Figure~\ref{fig:S_case_3_1}.
We bound this new diagram using Lemma~\ref{lem:convol_bound}:
we place the supremum norm on one of the $1$ lines and extract the rest of the line pairs.
Two pairs have $\Psi_zG_z$ and $G_z$ instead of a single pair
as in Case~2(a), and there is also one pair with $G_z$ and $1$. The rest are standard pairs with $G_z$ lines. This gives an overall upper bound
\begin{align}
	\|G_z*G_z\|_\infty^{N-4} \|G_z*1\|_\infty \|(\Psi_z G_z)*G_z\|_\infty^2 \|1\|_\infty.
\end{align}
By
Lemma~\ref{lem:psi_psi_tilde_bds}, we obtain an upper bound on this case by
\begin{equation}
	\|G_z\|_2^{2N-6}\|G_z\|_1 \|\Psi_zG_z\|_2^2
	\leq C^N  \chi(z)\frac{1}{r^{2(d-2)}} = C^N\frac{\chi(z)}{V}\frac{1}{r^{d-4}}.
\end{equation}

\begin{figure}[h]
\centering
	\includegraphics[scale = 0.6]{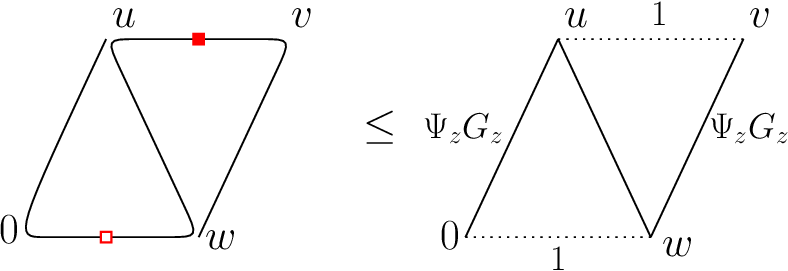}
	\caption{Schematic representation of the bound used for the first case of Case~3(a).
    Lines with constant weights are dotted.
    }
	\label{fig:S_case_3_1}
\end{figure}

\begin{figure}[h]
\centering
	\includegraphics[scale = 0.6]{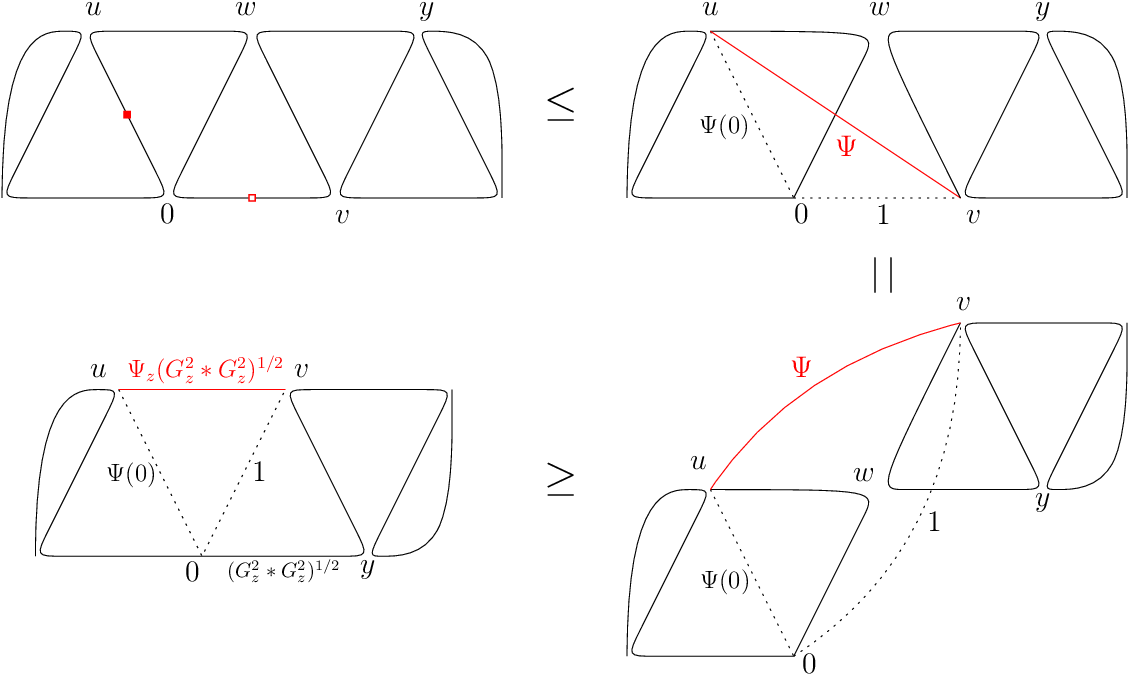}
	\caption{Schematic representation of the bound used for the second case of Case~3(a).
We use the Cauchy--Schwarz inequality twice.
The line between $u$ and $v$ is first non-existent (or equivalently constant equal to $1$) then $\Psi$ and finally $\Psi (G^2*G^2)^{1/2}$. Lines with constant weights are dotted.
}
	\label{fig:S_distance_3_bound}
\end{figure}

If instead the two lines containing $a$ and $b$ share an endpoint, then we
place the origin of the diagram at this vertex and apply the Cauchy--Schwarz inequality exactly as in \eqref{e:S_2_lines_bd}. This does not give a standard diagram,
due to the atypical diagonal $\Psi$ line in Figure~\ref{fig:S_distance_3_bound}.
To deal with this, we
apply the Cauchy--Schwarz inequality a second time,
to the portion of the diagram with this diagonal line where the sum runs over the one vertex in the two loops that does not belong to either of the two lines ($w$ in Figure~\ref{fig:S_distance_3_bound}).
This gives
\begin{align}
\label{e:2nd_CS_case_3}
	\sum_{w} [G_z(w)G_z(y-w)][G_z(v-w)G_z(u-w)]
	&\leq (G_z^2*G_z^2)^{1/2}(v-u) (G_z^2*G_z^2)^{1/2}(y) .
\end{align}
Diagrammatically, as illustrated in Figure~\ref{fig:S_distance_3_bound},
this has the effect of removing a loop to produce the diagram for
Case~2(a) for $S^{(N-1)}$ (with slightly modified weights) instead of $S^{(N)}$.
Since our estimates apply also when $G_z$ is replaced by $(G_z^2*G_z^2)^{1/2}$
(recall Lemma~\ref{lem:psi_psi_tilde_bds}), we
therefore again obtain a bound
$C^N V^{-1} \chi(z) r^{-(d-4)}$
as in \eqref{e:bd_S_case_2}.
Both contributions in Case~3(a) are equal and the overall bound is thus
\begin{equation}
	\text{ Case~3(a)}
    \prec  C^{N}\frac{\chi(z)}{V}\frac{1}{r^{d-4}},
\end{equation}
which
is sufficient.
\qed

\bigskip \noindent {\bf \emph{Case~3(b) for $S'^{(N)}$.}}
As in Case~2(b), the derivative
effectively replaces
exactly one of the weights $G_z$ encountered in Case~3(a) by $G_z*G_z$.
We use the estimates obtained in \eqref{e:new_factors_S_2_G}-\eqref{e:new_factors_S_2_Psi}.
The only difference comes from the second application of the Cauchy--Schwarz inequality in \eqref{e:2nd_CS_case_3} which changes some $G_z$ factors to $(G^2_z*G^2_z)^{1/2}$. However, since both functions are bounded in the same way
(as in Lemma~\ref{lem:psi_psi_tilde_bds})
this leads again to an overall bound
\begin{equation}
\text{ Case~3(b)}
\prec  C^{N}\frac{\chi(z)^2}{V}\frac{1}{r^{(d-4)/2}}.
\end{equation}
This completes the proof.
\qed

\section*{Acknowledgements}

This work was supported in part by NSERC of Canada.
GS thanks David Brydges and Tyler Helmuth for
helpful discussions during the early stages of this work.
We thank Tom Hutchcroft for comments on a preliminary version of the paper.


\begin{thebibliography}{10}

\bibitem{ABR16}
L.~Avena, E.~Bolthausen, and C.~Ritzmann.
\newblock A local {CLT} for convolution equations with an application to weakly
  self-avoiding walks.
\newblock {\em Ann. Probab.}, {\bf 44}:206--234, (2016).

\bibitem{BBS-saw4-log}
R.~Bauerschmidt, D.C. Brydges, and G.~Slade.
\newblock Logarithmic correction for the susceptibility of the 4-dimensional
  weakly self-avoiding walk: a renormalisation group analysis.
\newblock {\em Commun.\ Math.\ Phys.}, {\bf 337}:817--877, (2015).

\bibitem{BCHSS05a}
C.~Borgs, J.T. Chayes, R.~van~der Hofstad, G.~Slade, and J.~Spencer.
\newblock Random subgraphs of finite graphs: {I}. {The} scaling window under
  the triangle condition.
\newblock {\em Random Struct. Alg.}, {\bf 27}:137--184, (2005).

\bibitem{BGJ05}
M.~Bousquet-M\'{e}lou, A.J. Guttmann, and I.~Jensen.
\newblock Self-avoiding walks crossing a square.
\newblock {\em J. Phys. A: Math. Gen.}, {\bf 38}:9158--9181, (2005).

\bibitem{BHH19}
D.C. Brydges, T.~Helmuth, and M.~Holmes.
\newblock The continuous-time lace expansion.
\newblock {\em Commun. Pure Appl. Math.}, {\bf 74}:2251--2309, (2021).

\bibitem{BS85}
D.C. Brydges and T.~Spencer.
\newblock Self-avoiding walk in 5 or more dimensions.
\newblock {\em Commun. Math. Phys.}, {\bf 97}:125--148, (1985).

\bibitem{CJN21}
F.~Camia, J.~Jiang, and C.M. Newman.
\newblock The effect of free boundary conditions on the {Ising} model in high
  dimensions.
\newblock {\em Probab. Theory Related Fields}, {\bf 181}:311--328, (2021).

\bibitem{Clis17}
N.~Clisby.
\newblock Scale-free {Monte} {Carlo} method for calculating the critical
  exponent $\gamma$ of self-avoiding walks.
\newblock {\em J.\ Phys. A: Math.\ Theor.}, {\bf 50}:264003, (2017).

\bibitem{DGGNZ19}
Y.~Deng, T.M. Garoni, J.~Grimm, A.~Nasrawi, and Z.~Zhou.
\newblock The length of self-avoiding walks on the complete graph.
\newblock {\em J. Stat. Mech: Theory Exp.}, 103206, (2019).

\bibitem{DGGZ22}
Y.~Deng, T.M. Garoni, J.~Grimm, and Z.~Zhou.
\newblock Unwrapped two-point functions on high-dimensional tori.
\newblock {\em J. Stat. Mech: Theory Exp.}, 053208, (2022).

\bibitem{DS98}
E.~Derbez and G.~Slade.
\newblock The scaling limit of lattice trees in high dimensions.
\newblock {\em Commun.\ Math.\ Phys.}, {\bf 193}:69--104, (1998).

\bibitem{DJ72}
C.~Domb and G.S. Joyce.
\newblock Cluster expansion for a polymer chain.
\newblock {\em J. Phys. C: Solid State Phys.}, {\bf 5}:956--976, (1972).

\bibitem{DGHM18}
H.~Duminil-Copin, S.~Ganguly, A.~Hammond, and I.~Manolescu.
\newblock Bounding the number of self-avoiding walks: {Hammersley}--{Welsh}
  with polygon insertion.
\newblock {\em Ann.\ Probab.}, {\bf 48}:1644--1692, (2020).

\bibitem{D-CH13}
H.~Duminil-Copin and A.~Hammond.
\newblock Self-avoiding walk is sub-ballistic.
\newblock {\em Commun. Math. Phys.}, {\bf 324}:401--423, (2013).

\bibitem{DKY14}
H.~Duminil-Copin, G.~Kozma, and A.~Yadin.
\newblock Supercritical self-avoiding walks are space-filling.
\newblock {\em Ann.\ Inst.\ H.\ Poincar\'e Probab.\ Statist.}, {\bf
  50}:315--326, (2014).

\bibitem{GI92}
S.~Golowich and J.Z. Imbrie.
\newblock A new approach to the long-time behavior of self-avoiding random
  walks.
\newblock {\em Ann.\ Phys.}, {\bf 217}:142--169, (1992).

\bibitem{GI95}
S.E. Golowich and J.Z. Imbrie.
\newblock The broken supersymmetry phase of a self-avoiding random walk.
\newblock {\em Commun.\ Math.\ Phys.}, {\bf 168}:265--319, (1995).

\bibitem{GJ22}
A.J. Guttmann and I.~Jensen.
\newblock Self-avoiding walks and polygons crossing a domain on the square and
  hexagonal lattices.
\newblock Preprint, \url{https://arxiv.org/pdf/arXiv:2208.06744}, (2022).

\bibitem{HW62}
J.M. Hammersley and D.J.A. Welsh.
\newblock Further results on the rate of convergence to the connective constant
  of the hypercubical lattice.
\newblock {\em Quart. J. Math. Oxford}, (2), {\bf 13}:108--110, (1962).

\bibitem{Hara08}
T.~Hara.
\newblock Decay of correlations in nearest-neighbor self-avoiding walk,
  percolation, lattice trees and animals.
\newblock {\em Ann. Probab.}, {\bf 36}:530--593, (2008).

\bibitem{HHS03}
T.~Hara, R.~van~der Hofstad, and G.~Slade.
\newblock Critical two-point functions and the lace expansion for spread-out
  high-dimensional percolation and related models.
\newblock {\em Ann. Probab.}, {\bf 31}:349--408, (2003).

\bibitem{HS92a}
T.~Hara and G.~Slade.
\newblock Self-avoiding walk in five or more dimensions. {I.} {The} critical
  behaviour.
\newblock {\em Commun.\ Math.\ Phys.}, {\bf 147}:101--136, (1992).

\bibitem{HHII11}
M.~Heydenreich and R.~van~der Hofstad.
\newblock Random graph asymptotics on high-dimensional tori {II}: volume,
  diameter and mixing time.
\newblock {\em Probab. Theory Related Fields}, {\bf 149}:397--415, (2011).
\newblock Correction: {\it Probab. Theory Related Fields}, {\bf
  175}:1183--1185, (2019).

\bibitem{HH17book}
M.~Heydenreich and R.~van~der Hofstad.
\newblock {\em Progress in High-Dimensional Percolation and Random Graphs}.
\newblock Springer International Publishing Switzerland, (2017).

\bibitem{HHS98}
R.~van~der Hofstad, F.~den Hollander, and G.~Slade.
\newblock A new inductive approach to the lace expansion for self-avoiding
  walks.
\newblock {\em Probab. Theory Related Fields}, {\bf 111}:253--286, (1998).

\bibitem{Hutc18}
T.~Hutchcroft.
\newblock The {Hammersley}--{Welsh} bound for self-avoiding walk revisited.
\newblock {\em Electron. Commun. Probab.}, {\bf 23}:Paper~5, (2018).

\bibitem{Hutc19}
T.~Hutchcroft.
\newblock Self-avoiding walk on nonunimodular transitive graphs.
\newblock {\em Ann. Probab.}, {\bf 47}:2801--2829, (2019).

\bibitem{HMS22}
T.~Hutchcroft, E.~Michta, and G.~Slade.
\newblock High-dimensional near-critical percolation and the torus plateau.
\newblock Preprint, \url{https://arxiv.org/pdf/2107.12971}, (2021).

\bibitem{Kest64}
H.~Kesten.
\newblock On the number of self-avoiding walks. {II}.
\newblock {\em J. Math. Phys.}, {\bf 5}:1128--1137, (1964).

\bibitem{KLMS95}
K.M. Khanin, J.L. Lebowitz, A.E. Mazel, and Ya.G. Sinai.
\newblock Self-avoiding walks in five or more dimensions: polymer expansion
  approach.
\newblock {\em Russian Math.\ Surveys}, {\bf 50}:403--434, (1995).

\bibitem{LSW04}
G.F. Lawler, O.~Schramm, and W.~Werner.
\newblock On the scaling limit of planar self-avoiding walk.
\newblock {\em Proc. Symposia Pure Math.}, {\bf 72}:339--364, (2004).

\bibitem{LM16}
P.H. Lundow and K.~Markstr\"om.
\newblock The scaling window of the $5${D} {Ising} model with free boundary
  conditions.
\newblock {\em Nucl. Phys. B}, {\bf 911}:163--172, (2016).

\bibitem{MS93}
N.~Madras and G.~Slade.
\newblock {\em The Self-Avoiding Walk}.
\newblock Birkh{\"a}user, Boston, (1993).

\bibitem{Mich22}
E.~Michta.
\newblock The scaling limit of the weakly self-avoiding walk on a
  high-dimensional torus.
\newblock Preprint, \url{https://arxiv.org/pdf/2203.07695}, (2022).

\bibitem{MPS23}
E.~Michta, J.~Park, and G.~Slade.
\newblock Universal finite-size scaling for the $4$-dimensional multi-component
  hierarchical $|\varphi|^4$ model.
\newblock In preparation.

\bibitem{Nien82}
B.~Nienhuis.
\newblock Exact critical exponents of the ${O}(n)$ models in two dimensions.
\newblock {\em Phys. Rev. Lett.}, {\bf 49}:1062--1065, (1982).

\bibitem{Papa06}
V.~Papathanakos.
\newblock {\em Finite-Size Effects in High-Dimensional Statistical Mechanical
  Systems: {The} {Ising} Model with Periodic Boundary Conditions}.
\newblock PhD thesis, Princeton University, (2006).

\bibitem{Slad06}
G.~Slade.
\newblock {\em The Lace Expansion and its Applications.}
\newblock Springer, Berlin, (2006).
\newblock Lecture Notes in Mathematics Vol. 1879. Ecole d'Et\'{e} de
  Probabilit\'{e}s de Saint--Flour XXXIV--2004.

\bibitem{Slad20_wsaw}
G.~Slade.
\newblock The near-critical two-point function and the torus plateau for weakly
  self-avoiding walk in high dimensions.
\newblock Preprint, \url{https://arxiv.org/pdf/2008.00080}, (2020).

\bibitem{Slad20}
G.~Slade.
\newblock Self-avoiding walk on the complete graph.
\newblock {\em J. Math. Soc. Japan}, {\bf 72}:1189--1200, (2020).

\bibitem{Slad22}
G.~Slade.
\newblock Self-avoiding walk on the hypercube.
\newblock To appear in {\it Random Structures Algorithms},
  \url{https://arxiv.org/pdf/2108.03682}, (2021).

\bibitem{Slad20_lace}
G.~Slade.
\newblock A simple convergence proof for the lace expansion.
\newblock {\em Ann.\ I.\ Henri Poincar\'{e} Probab.\ Statist.}, {\bf
  58}:26--33, (2022).

\bibitem{WY14}
M.~Wittmann and A.P. Young.
\newblock Finite-size scaling above the upper critical dimension.
\newblock {\em Phys. Rev. E}, {\bf 90}:062137, (2014).

\bibitem{Yadi16}
A.~Yadin.
\newblock Self-avoiding walks on finite graphs of large girth.
\newblock {\em ALEA, Lat. Am. J. Probab. Math. Stat.}, {\bf 13}:521--544,
  (2016).

\bibitem{ZGFDG18}
Z.~Zhou, J.~Grimm, S.~Fang, Y.~Deng, and T.M. Garoni.
\newblock Random-length random walks and finite-size scaling in high
  dimensions.
\newblock {\em Phys. Rev. Lett.}, {\bf 121}:185701, (2018).

\end{thebibliography}

\end{document}